\author{Nakul Dawra}
\title{On the link Floer homology of $L$-space links}
\date{\today}
\newtheorem{thm}{Theorem}[section]
\newtheorem{ack}[thm]{Acknowledgments}
\newtheorem{prop}[thm]{Proposition}
\newtheorem{lem}[thm]{Lemma}
\newtheorem{cor}[thm]{Corollary}
\theoremstyle{definition}
\newtheorem{defn}[thm]{Definition}
\newtheorem{exmp}[thm]{Example}
\newtheorem{rem}[thm]{Remark}
\theoremstyle{remark}
\newtheorem{lem*}{Lemma}
\newtheorem{case}{Case}
\begin{document}
\begin{abstract}
We will prove that, for a $2$ or $3$ component $L$-space link, $HFL^-$ is completely determined by the multi-variable Alexander polynomial of all the sub-links of $L$, as well as the pairwise linking numbers of all the components of $L$. We will also give some restrictions on the multi-variable Alexander polynomial of an $L$-space link. Finally, we use the methods in this paper to prove a conjecture by Yajing Liu classifying all $2$-bridge $L$-space links.
\end{abstract}
\maketitle

\section{Introduction}\label{sec:Introduction}
In \cite{OZKnot} and \cite{OZLinks}, knot and link Floer homology were defined as a part of Ozsv\'{a}th and Szab\'{o}'s Heegaard Floer theory (introduced in \cite{OZOrig}). These give rise to graded homology groups, which are invariants of isotopy classes of knots and links embedded in $S^3$. Carefully examining these groups has yielded a wealth of topological insights (see \cite{NFibered}, \cite{OZ4Genus}, \cite{OZGenus}, \cite{OZTnorm}, \cite{OZRational} and \cite{WCosmetic}). The Euler characteristic of link (knot) Floer homology is the multi-variate (single variable) Alexander polynomial\footnote{ This is ``almost true''. We will make it precise in Definition \ref{def:Apoly}.}. \\

Throughout this paper, we will work over the field $\mathbb{F} = \mathbb{Z}/2\mathbb{Z}$, and $L = L_1 \sqcup L_2 \sqcup \ldots \sqcup L_l$ will always be an $l$ component link inside $S^3$ unless otherwise specified. We will focus on links all of whose large positive surgeries yeild $L$-spaces.\\

$L$-spaces are rational homology spheres whose Heegaard Floer homology is the simplest possible. More specifically, recall that for any rational homology $3$-sphere $Y$ we must have dim$(\widehat{HF}(Y))\geq |H_1(Y)|$, and so we define an $L$-space as:
\begin{defn}  
$Y$ a $\mathbb{Q}HS^3$ is an $L$-space if dim$(\widehat{HF}(Y)) = |H_1(Y)|$.
\end{defn}
Lens spaces are the simplest examples of $L$-spaces. Further examples include any connected sums of $3$-manifold with elliptic geometry\cite{OZLspace}, as well as double branched covers of quasi-alternating links \cite{OZDbcovers}. It was shown in Theorem $1.4$ of \cite{OZOrig} that such manifolds do not admit co-orientable $C^2$ taut foliations.\\
We will define an $L$-space link as follows:
\begin{defn}
$L \subset S^3$ is an $L$-space link if the $3$-manifolds $S^3_{n_1,\ldots,n_l}(L)$ obtained by surgery on $L$ are all $L$-spaces when all of the $n_i$ are sufficiently large.\footnote{ Note that this definition does not depend on the orientation of the components of $L$.}
\end{defn}

$L$-space links were first studied in \cite{GNAlgebraic}, where it was shown that any link arising as the embedded link of a complex plane curve singularity (i.e. algebraic link) is an $L$-space link (note that this includes all torus links). The general study of properties and examples of $L$-space links was initiated in \cite{YLspace} see also \cite{HLspace}. $L$-space knots were first examined in \cite{OZLspace}. In that paper it was shown that for an $L$-space knot, the knot Floer homology is completely determined by its Euler Characteristic (i.e. the Alexander polynomial). In this paper, we give a generalization of this statement to $2$ and $3$ component $L$-space links inside $S^3$. First, we recall some standard facts and notation.
\begin{defn}
Let $\mathbb{H}(L)_i$ denote the affine lattice over $\mathbb{Z}$ given by $\mbox{lk}(L_i, L\char92L_i)/2 + \mathbb{Z}$. We define:
\[\mathbb{H}(L) := \bigoplus_{i=1}^l \mathbb{H}(L)_i.\]
We can think of every element of $\mathbb{H}(L)$ as an element of the set of relative Spin$^c$ structures of $L\subset S^3$ via the identification $\mathbb{H}(L) \to \underline{\mbox{Spin}^c}(S^3,L)$ given in section 8.1 of \cite{OZLinks}. Note that $\mathbb{H}(L)$ is an affine lattice over $H_1(S^3 - L) \cong \mathbb{Z}^l$.
\end{defn}

Both $HFL^-$ and $\widehat{HFL}$ for a link $L$ inside $S^3$ split into direct summands indexed by pairs $(d,\mbox{\textbf{s}})$, where $d\in \mathbb{Z}$ (the homological grading) and \textbf{s} $\in \mathbb{H}(L)$. We will write these summands as $HFL^-_d(L,\mbox{\textbf{s}})$ and $\widehat{HFL}_d(L,\mbox{\textbf{s}})$.\\

Now, if \textbf{s} $= (s_1,s_2,\ldots, s_l) \in \mathbb{H}(L)$, we denote by $u^{\mbox{\scriptsize{\textbf{s}}}}$ the monomial $u_1^{s_1}\ldots u_l^{s_l}$. 
\begin{defn}\label{def:Apoly}
In this paper, we define the symmetric multi-variable Alexander polynomial $\Delta_L(u_1,u_2,\ldots, u_l)$ for $L$ so that the following equality\footnote{ In proposition $9.1$ of \cite{OZLinks}, the above equality was only shown to hold up to sign. So our sign convention for $\Delta_L$ here may not be standard, but it will make the statement of some of our Theorems easier. For our main Theorem, we only need to know $\Delta_L$ up to sign.} holds:
\[\sum_{\mbox{\scriptsize{\textbf{s}}} \in \mathbb{H}(L)} \chi(\widehat{HFL}_*(L,\mbox{\textbf{s}}))u^{\mbox{\scriptsize{\textbf{s}}}} =  
\prod_{i=1}^l\left(u_i^{1/2} - u_i^{-1/2}\right)\Delta_L(u_1,u_2,\ldots,u_l).\]
\end{defn}

\begin{thm}\label{thm:main} Let $L \subset S^3$ be a $2$ or $3$ component $L$-space link and let \textnormal{\textbf{s}} $\in \mathbb{H}(L)$. Then $HFL^-(L,\mbox{\textnormal{\textbf{s}}})$ is completely determined by the symmetric multi-variable Alexander polynomials $\pm\Delta_M$ for every sub-link $M \subset L$, as well as the pairwise linking numbers of components of $L$.
\end{thm}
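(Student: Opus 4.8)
The plan is to reduce the whole statement to a single numerical invariant, the $H$-function, and then argue that this function is equivalent data to the collection of Alexander polynomials of sublinks together with the pairwise linking numbers. First I would recall the large surgery formula for links: the filtered complex $CFL^-(L)$ over $\mathbb{F}[U_1,\ldots,U_l]$ gives rise, for each $\mathbf{s}\in\mathbb{H}(L)$, to a generalized Floer complex $A^-(\mathbf{s})$ obtained as an appropriate sub-quotient (the large-surgery truncation), and the homology $H_*(A^-(\mathbf{s}))$ computes $HF^-$ of a sufficiently large surgery on $L$ in the $\mathrm{Spin}^c$ structure corresponding to $\mathbf{s}$. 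The defining property of an $L$-space link forces each such $HF^-$ to be as simple as possible, namely a single free $\mathbb{F}[U]$-tower, so $H_*(A^-(\mathbf{s}))\cong\mathbb{F}[U]$ for every $\mathbf{s}$. I would then define the $H$-function $H_L:\mathbb{H}(L)\to\mathbb{Z}$ recording the Maslov grading of the generator of this tower; thus the vertical homology of every truncated complex is known once $H_L$ is known.

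Second, I would show that $H_L$ is determined by the Alexander data. On one hand, Definition \ref{def:Apoly} packages the Euler characteristics $\chi(\widehat{HFL}_*(L,\mathbf{s}))$ into $\Delta_L$; on the other hand, for an $L$-space link these Euler characteristics are computed by an inclusion--exclusion alternating sum of $H_L$ over the vertices of a unit hypercube (the link analogue of the fact that, for an $L$-space knot, the torsion coefficients recover the Alexander polynomial). Inverting this inclusion--exclusion recovers $H_L$ from the $\chi(\widehat{HFL})$, provided one also knows the asymptotic behavior of $H_L$ in the region where one or more coordinates are large. In that region $A^-(\mathbf{s})$ degenerates to the corresponding object for a sublink $M\subset L$, so the large-coordinate values of $H_L$ are governed by $H_M$ together with a stabilization constant depending on the pairwise linking numbers (which also fix the affine lattice $\mathbb{H}(L)$ itself). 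Running this over all sublinks $M$ shows that $\{\pm\Delta_M\}_M$ and the linking numbers together determine $H_L$ completely.

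Third, and this is the crux, I would reconstruct $HFL^-(L,\mathbf{s})$ from $H_L$. The point is that $HFL^-$ is the homology of the associated graded of $CFL^-$ with respect to the Alexander filtration, and since each $A^-(\mathbf{s})$ has rank-one homology, the inclusion and projection maps relating the $A^-(\mathbf{s})$ for neighboring $\mathbf{s}$ are, on homology, multiplication by powers of $U$ whose exponents are read directly from the differences of $H_L$. For two and three components one can assemble these maps into an explicit finite-dimensional model, a higher-dimensional analogue of the staircase complex, and compute $HFL^-$ from it directly.

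The hard part will be precisely this rigidity: one must rule out extension problems and hidden differentials, that is, show that the filtered homotopy type of $CFL^-$ is \emph{forced} by the data $H_L$ rather than merely constrained by it. I expect to handle this by a careful analysis of the spectral sequences coming from the link-surgery and Alexander filtrations in the $2$- and $3$-component settings, where the combinatorics of the hypercube is still tractable and the possible patterns of rank-one homology groups can be enumerated. This rigidity is what I expect to limit the theorem to $l\le 3$, and it is also where the linking numbers re-enter, since they pin down the relative gradings between the towers attached to different $\mathbf{s}$ and hence the exact exponents of the connecting $U$-maps.
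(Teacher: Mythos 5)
Your proposal is correct and is essentially the paper's own proof in different packaging: your $H$-function is equivalent data to the paper's edge-labeled graph $\mathfrak{T}(L)$ (the edge labels are exactly the first differences of $H_L$, normalized by $HF^-(S^3)$ at large $\mathbf{s}$), your inclusion--exclusion inversion with sublink asymptotics and linking-number shifts is Proposition \ref{prop:maxs} together with the lemma that $\mathfrak{T}(L)$ is determined by the $\pm\Delta_M$, and your final rigidity step is Lemma \ref{lem:comp}, which enumerates the possible hypercube patterns of rank-one homologies in dimensions $\le 3$ and checks that each determines $HFL^-$. Remark \ref{rem:4fail} confirms your expectation that it is precisely this rigidity --- not the determination of $H_L$ --- that fails for $l \ge 4$.
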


In \cite{OZLspace}, it was shown that being an $L$-space knot forces strong restrictions on the Alexander polynomial, and we will generalize this to links. Our restrictions will depend on the Alexander polynomial of the link $L$, as well as the Alexander polynomial of all its sub-links after a shift depending on various linking numbers.
\begin{defn} 
Given a proper subset $S = \{i_1,i_2, \ldots, i_k\} \subsetneq \{1,\ldots, l\}$, we let $\{j_1, j_2, \ldots, j_{l-k}\} = \{1,\ldots, l\}\char92 S$ where $j_a < j_b$ when $a<b$. Let $L_S \subset L$ be the sub-link $L_{i_1}\sqcup L_{i_2} \sqcup \ldots \sqcup L_{i_k}$. The polynomial $P^L_{L_S}$ is defined as follows:\\
When $S = \emptyset$ we have,
\[P^L_\emptyset = \left(\prod_{i=1}^l u_i^{1/2}\right)\Delta_L(u_1,\ldots,u_l);\]
When $l-k > 1$ we have,
\[P^L_{L_S}(u_{j_1},u_{j_2},\ldots,u_{j_{l-k}}) = \left(\prod_{p=1}^{l-k} u_{j_i}^{1/2 + \mbox{\scriptsize{\textnormal{lk}}}(L_{j_i}, L_S)/2}\right)\Delta_{L\char92 L_S}(u_{j_1},\ldots, u_{j_{(l-k)}});\]
And finally when $l-k = 1$ we have,
\[P^L_{L_S}(u_{j_1}) = u_{j_1}^{\frac{\mbox{\scriptsize{\textnormal{lk}}}\left(L_{j_1}, L_S\right)}{2}}\left(\sum_{i\geq 0} u_{j_1}^{-i}\right)\Delta_{L\char92 L_S}(u_{j_1}).\]

Now, fix some \textnormal{\textbf{s}} $ = (s_1,s_2,\ldots, s_l)\in \mathbb{H}(L)$ and $r \in \{1,\ldots, l\}$ so that $r \not\in S$. Then, define 
\[R_{\substack{\mbox{\scriptsize{\textbf{s}}}'\geq \mbox{\scriptsize{\textbf{s}}}\\s'_r = s_r}}(P^L_{L_S})\]
to be the sum of all the coefficients of monomials $u_{j_1}^{s'_1}\ldots u_{j_{l-k}}^{s'_{j_{l-k}}}$ of $P^L_{L_S}$ that satisfy $s'_r = s_r$ and $s'_{j_p} \geq s_{j_p}$ for $j_p \neq r$. 
\end{defn}

\begin{exmp} Consider the $2$-bridge link $L = b(20,-3)$ (see Section \ref{sec:Application} for definitions and notation). Then;
\begin{align*}
\Delta_L(u_1,u_2) =& u_1^{1/2}u_2^{3/2} + u_1^{3/2}u_2^{1/2} + u_1^{1/2}u_2^{-1/2} + u_1^{-1/2}u_2^{1/2} +u_1^{-3/2}u_2^{-1/2} + u_1^{-1/2}u_2^{-3/2}- u_1^{3/2}u_2^{3/2}\\ &-u_1^{1/2}u_2^{1/2} - u_1^{-1/2}u_2^{-1/2} - u_1^{-3/2}u_2^{-3/2}.\\
P^L_\emptyset(u_1,u_2) = & u_1u_2^2 + u_1^2u_2 + u_1 + u_2 + \frac{1}{u_1} + \frac{1}{u_2} -u_1^2u_2^2 - u_1u_2 -1 - \frac{1}{u_1u_2}.\\
\end{align*}
$L = L_1 \sqcup L_2$ is a $2$ component link with both components unknots. The linking number of the $2$ components is $2$ so; 
\[P^L_{L_1}(u_2) = u_2\left(\sum_{i\geq 0}u_2^{-i}\right) \mbox{ and } P^L_{L_2}(u_1) = u_1\left(\sum_{i\geq 0}u_1^{-i}\right).\]
 
\end{exmp}

\begin{thm}\label{thm:alex} If $L$ is an $L$-space link, then for any \textnormal{\textbf{s}} $\in \mathbb{H}(L)$ and $r \in \{1,2,\ldots, l\}$:
\[\sum_{\substack{S \subset \{1,\ldots, l\}\\r \not\in S}}(-1)^{l-1-|S|} R_{\substack{\mbox{\textnormal{\scriptsize{\textbf{s}}}}'\geq \mbox{\textnormal{\scriptsize{\textbf{s}}}}\\s'_r = s_r}}(P^L_{L_S}) = 0 \textnormal{\mbox{ or }} 1.\]
\end{thm}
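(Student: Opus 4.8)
The plan is to realize the left-hand side as the graded Euler characteristic of a homology group that the $L$-space condition forces to be either $0$ or $\mathbb{F}$, and to control the gradings well enough that this Euler characteristic comes out non-negative. First I would record the generating-function meaning of the ingredients. By Definition \ref{def:Apoly},
\[ \sum_{\mathbf{s}\in\mathbb{H}(L)} \chi(\widehat{HFL}_*(L,\mathbf{s}))\, u^{\mathbf{s}} = \prod_{i=1}^l\left(u_i^{1/2}-u_i^{-1/2}\right)\Delta_L = P^L_\emptyset\cdot\prod_{i=1}^l\left(1 - u_i^{-1}\right), \]
so the factor $\prod_i(1-u_i^{-1})$ is exactly the passage between the ``hat'' and ``minus'' flavors. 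The operation $R$ that sums coefficients over an orthant in the variables $u_{j_p}$ with $j_p\neq r$ is the combinatorial shadow of replacing $\widehat{HFL}$ by $HFL^-$ in those directions (summing up each $U_{j_p}$-tower), while freezing the coordinate $s'_r = s_r$ keeps the $r$-th direction in the hat flavor. Thus each summand $R_{\mathbf{s}'\geq\mathbf{s},\,s'_r=s_r}(P^L_{L_S})$ should be read as the Euler characteristic of the ``mixed'' complex that is minus in the directions $j_p\neq r$ and hat in the direction $r$, computed for the sublink $L\char92 L_S$.

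Next I would explain why the sublinks $L\char92 L_S$ and the linking-number shifts enter. Setting $U_i = 1$ in the link Floer complex of $L$ recovers, up to an overall grading shift, the link Floer complex of the sublink $L\char92 L_i$; on Alexander polynomials this is precisely the Torres formula, and the powers $u_{j_p}^{\mathrm{lk}(L_{j_p},L_S)/2}$ appearing in $P^L_{L_S}$ are exactly the Torres/linking-number shifts. Consequently the alternating sum $\sum_{S\not\ni r}(-1)^{l-1-|S|}(\cdots)$ is an inclusion--exclusion (M\"obius inversion) over the subsets $S$ with $r\notin S$, designed to cancel all of the sublink contributions and isolate the contribution of the full link $L$. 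Carrying this out, I expect the whole left-hand side to collapse to a single difference of the $H$-function of $L$ in the $r$-th direction, namely $H_L(\mathbf{s}) - H_L(\mathbf{s}+\mathbf{e}_r)$, equivalently the rank of the relevant $U_r$-mapping cone on the homology $H_*(\mathcal{A}^-(\mathbf{s}))\cong\mathbb{F}[U]$ supplied by the $L$-space hypothesis following Liu \cite{YLspace}.

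Finally I would invoke the structural results for $L$-space links: because every $H_*(\mathcal{A}^-(\mathbf{s}))$ is a single tower $\mathbb{F}[U]$ whose top degree is recorded by $H_L(\mathbf{s})$, and $H_L$ is non-increasing with $H_L(\mathbf{s}) - H_L(\mathbf{s}+\mathbf{e}_r)\in\{0,1\}$, the isolated quantity lies in $\{0,1\}$. The signs $(-1)^{l-1-|S|}$ and the grading shifts are arranged precisely so that the surviving term is counted positively, which is what yields ``$0$ or $1$'' rather than ``$0$ or $\pm1$.''

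The main obstacle I anticipate is the bookkeeping in the second paragraph: verifying that the Torres-type grading shifts and the signs $(-1)^{l-1-|S|}$ match across all sublinks so that the inclusion--exclusion genuinely telescopes to the single $r$-directional difference of $H_L$, with no parity ambiguity surviving. This is exactly where the $L$-space hypothesis is essential, since giving $H_*(\mathcal{A}^-(\mathbf{s}))\cong\mathbb{F}[U]$ in a single, known grading for every $\mathbf{s}$ and every sublink pins down both the dimension and the homological degree of each contribution, which is what ultimately forces the sum to be $0$ or $1$ on the nose.
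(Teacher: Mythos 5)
Your high-level strategy is the same as the paper's (interpret coefficients as Euler characteristics of truncated $A^-$-complexes, collapse the alternating sum by inclusion--exclusion, and finish with the $L$-space structural result), but there is a concrete error in your first step which, taken literally, makes the collapse fail. The summand $R_{\mathbf{s}'\geq\mathbf{s},\,s'_r=s_r}(P^L_{L_S})$ is \emph{not} the Euler characteristic of a single mixed (hat-in-direction-$r$, minus-elsewhere) complex for $L\setminus L_S$. Take $l=2$, $r=1$, $S=\emptyset$: the coefficients of $P^L_\emptyset$ are $\chi(HFL^-(L,\cdot))=\chi(H_*(A^-_{\cdot,(2,2)}))$, and writing each as the difference $\chi(H_*(A^-_{(s'_1,s_2),(2,1)}))-\chi(H_*(A^-_{(s'_1,s_2-1),(2,1)}))$ and summing over $s_2\geq s'_2$ telescopes to \emph{two} boundary terms: $-\chi(H_*(A^-_{(s'_1,s'_2-1),(2,1)}))$, plus the stable value of $\chi(H_*(A^-_{(s'_1,s_2),(2,1)}))$ for $s_2\gg 0$, which by Remark \ref{rem:shift} is a knot-level invariant of $L_1$ (a coefficient of $P^L_{L_2}$) and is not zero in general. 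The point is that although the coefficients of $P^L_{L_S}$ vanish for large exponents, the mixed Euler characteristics do not --- they stabilize to sublink values --- so the orthant sum picks up corner terms at infinity. In general $R(P^L_{L_S})$ is a signed sum of mixed Euler characteristics over \emph{all} sublinks of $L\setminus L_S$ containing $L_r$, and it is exactly these corner-at-infinity terms that the outer alternating sum over $S$ exists to cancel (M\"obius inversion over unions of subsets), leaving the single full-link term $\chi(H_*(A^-_{\mathbf{s}(\emptyset),(2,1,\ldots,1)}))$. This is the content of equations (\ref{eq:alex1})--(\ref{eq:fin}) in the paper's proof.

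If each $R(P^L_{L_S})$ really were the single term for $L\setminus L_S$, then the left-hand side of the theorem would be $\sum_{M\ni L_r}\pm\chi_M$ over all sublinks $M$ containing $L_r$, with nothing to cancel; already for $l=2$ this is $X_{L_1}-X_L$ with each $X\in\{0,1\}$, i.e.\ a quantity in $\{-1,0,1\}$, and the theorem's bound would not follow. So the gap is precisely the bookkeeping you flagged as the anticipated obstacle: the telescoping generates sublink correction terms and one must carry them through the inclusion--exclusion. Your final step is fine once this is done: the surviving term is $\chi$ of a quotient $A^-_{\mathbf{t},\mathbf{1}}/A^-_{\mathbf{t}-e_r,\mathbf{1}}$, whose homology is $0$ or $\mathbb{F}$ supported in even degree by Theorem \ref{thm:Aminus} (equivalently, the $H$-function drops by $0$ or $1$), giving $0$ or $1$. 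Note that the non-negativity comes from the evenness of the top gradings (property 4 of basic short exact cubes), encoded in your $H$-function formulation, and not from the choice of the signs $(-1)^{l-1-|S|}$; also the precise lattice point at which the difference is evaluated is shifted from the $\mathbf{s}$, $\mathbf{s}+e_r$ you wrote, though that is harmless.
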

\begin{rem}\label{rem:l=1}
When $l = 1$, this says that the coefficients of $P^L_\emptyset$ are all $1$ or $0$, which follows from the work in \cite{OZLspace}.
\end{rem}
Given any $2$ variable polynomial $F(u_1,u_2)$, we define $F|_{(i,j)}$, where $i = 1$ or $2$, to be the polynomial obtained from $F$ by discarding all monomials where the exponent of $u_i$ is not equal to $j$. Then the above Theorem, when restricted to the $l = 2$ case, reads as follows:
\begin{cor}\label{cor:alex2}
Suppose that $L = L_1 \sqcup L_2$ is an $L$-space link. Then the nonzero coefficients of $P^L_\emptyset$ are all $\pm 1$. The nonzero coefficients of $P^L_{\emptyset}|_{(r,s'_r)}$ for $r = 1$ or $2$ and any $s'_r \in \mathbb{H}(L)_r$, alternate in sign. The first nonzero coefficient of $P^L_{\emptyset}|_{(r,s'_r)}$ is $-1$ if the coefficient of $u_r^{s'_r}$ in $P^L_{L_{3-r}}$ is $0$; and the first nonzero coefficient of $P^L_{\emptyset}|_{(r,s'_r)}$ is $1$ if the coefficient of $u_r^{s'_r}$ in $P^L_{L_{3-r}}$ is $1$. 
\end{cor}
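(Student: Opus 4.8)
The plan is to derive the corollary by specializing Theorem~\ref{thm:alex} to $l=2$ and unwinding the definitions of $R$ and $P^L_{L_S}$. Fix $r$; since the two components play symmetric roles it suffices to treat $r=1$. Write $c(s_1,s_2)$ for the coefficient of $u_1^{s_1}u_2^{s_2}$ in $P^L_\emptyset$ and $p(s_1)$ for the coefficient of $u_1^{s_1}$ in $P^L_{L_2}$. The two subsets $S\subset\{1,2\}$ with $1\notin S$ are $S=\emptyset$ and $S=\{2\}$, carrying signs $(-1)^{2-1-0}=-1$ and $(-1)^{2-1-1}=+1$. For $S=\emptyset$ the complement is all of $L$, and the definition of $R$ fixes the first coordinate ($s'_1=s_1$) while letting the second range over $s'_2\ge s_2$, so the corresponding term is the tail sum $G(s_1,s_2):=\sum_{s'_2\ge s_2}c(s_1,s'_2)$; for $S=\{2\}$ the complement $L\setminus L_2=L_1=L_r$ is a single component, leaving no inequality constraint, so the corresponding term is just $p(s_1)$. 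Hence Theorem~\ref{thm:alex} reads $p(s_1)-G(s_1,s_2)\in\{0,1\}$, i.e.
\[ G(s_1,s_2)\in\{\,p(s_1)-1,\;p(s_1)\,\}\qquad\text{for every }s_2. \]

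Next I would extract the three assertions from this two-value constraint. Because $P^L_\emptyset$ has finite support, $G(s_1,s_2)=0$ for $s_2\gg 0$; combined with the display this forces $p(s_1)\in\{0,1\}$, so the case division in the corollary is exhaustive. Telescoping gives $c(s_1,s_2)=G(s_1,s_2)-G(s_1,s_2+1)$, a difference of two elements of the two-element set $\{p(s_1)-1,p(s_1)\}$, whence $c(s_1,s_2)\in\{-1,0,1\}$; as every coefficient of $P^L_\emptyset$ arises this way (over all $s_1$), this is the first assertion.

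For the remaining two assertions I would use that $s_2\mapsto G(s_1,s_2)$ is a step function taking only two adjacent integer values. Reading $s_2$ from large to small, a nonzero $c(s_1,s_2)$ records a jump of $G$, and since $G$ oscillates between exactly two values these jumps alternate in direction; thus the nonzero entries of $P^L_\emptyset|_{(1,s_1)}$ alternate in sign, which is the second assertion. For the sign of the first (largest-$s_2$) nonzero coefficient, recall $G=0$ at the top. If $p(s_1)=0$ the admissible values are $\{-1,0\}$, so the first departure of $G$ from $0$ is a drop, giving $c=-1$; if $p(s_1)=1$ they are $\{0,1\}$, so the first departure is a rise, giving $c=+1$. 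This is exactly the third assertion, with ``first'' read as the top (highest $u_2$-degree) term. The case $r=2$ is identical after exchanging the two factors.

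The bulk of this argument is bookkeeping, so the only points requiring genuine care are (i) the correct identification of the $S=\{2\}$ term as a single coefficient rather than a sum — which hinges on $L\setminus L_2$ being the one remaining component $L_r$, so that the defining inequalities in $R$ are vacuous — and (ii) fixing the orientation convention, namely that ``first nonzero coefficient'' means the top of the $s_2$-range. I would verify both against the worked example $b(20,-3)$: there, for each fixed $u_1$-power the coefficients of $P^L_\emptyset$ indeed alternate and begin (at the top) with $-1$ exactly when the matching coefficient of $P^L_{L_2}$ vanishes. A final routine point is that the derivation is carried out for one fixed $r$ and one fixed value $s_r=s'_r$ at a time, and one must observe that ranging over all such choices recovers the full statement about $P^L_\emptyset|_{(r,s'_r)}$ for all $r$ and all $s'_r$.
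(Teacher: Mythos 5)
Your proposal is correct and takes essentially the same approach as the paper: specialize Theorem \ref{thm:alex} to $l=2$ with $r$ fixed, subtract the constraints at consecutive values of the second coordinate to bound each coefficient by $1$ in absolute value, and read off the alternation and the sign of the leading (highest-degree) coefficient from the fact that the tail sums are confined to the two-element set $\{p(s_1)-1,\,p(s_1)\}$. The only minor deviation is that you deduce $p(s_1)\in\{0,1\}$ from the finite support of $P^L_\emptyset$ rather than citing Remark \ref{rem:l=1} as the paper does, which makes your argument slightly more self-contained but does not change the route.
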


\begin{proof}
As in Theorem \ref{thm:alex}, fix \textbf{s}$' = (s'_1,s'_2)$. Suppose without loss of generality that $r = 1$. We denote by $a_{s_1,s_2}$ the coefficient of $u_1^{s_1}u_2^{s_2}$ in $P^L_\emptyset(u_1,u_2)$, and $a_{s_1}$ the coefficient of $u_1^{s_1}$ in $P^L_{L_2}(u_1)$. Then according to Theorem \ref{thm:alex}:
\begin{equation}\label{eq:e1}
a_{s'_1} - \sum_{s_2\geq s'_2} a_{s'_1,s_2} = 0 \mbox{ or } 1.
\end{equation}
Similarly;
\begin{equation}\label{eq:e2}
a_{s'_1} - \sum_{s_2\geq s'_2+1} a_{s'_1,s_2} = 0 \mbox{ or } 1.
\end{equation}
Subtracting \ref{eq:e1} from \ref{eq:e2} gives $a_{s'_1,s'_2} = -1,0$ or $1$. We have thus shown that all the coefficients of $P^L_{\emptyset}$ or $-1, 0$ or $-1$. We know that $a_{s'_1}$ must be either $1$ or $0$ (see Remark \ref{rem:l=1}). Combining this with equation \ref{eq:e1} gives that  $\sum_{s_2\geq s'_2} a_{s'_1,s_2} = 0$ or $1$ if $a_{s'_1} = 1$, and $\sum_{s_2\geq s'_2} a_{s'_1,s_2} = 0$ or $-1$ if $a_{s'_1} = 0$. The rest of the corollary now immediately follows. 
\end{proof}

Part of the above corollary was already shown directly in Theorem $1.15$ of \cite{YLspace}. Additionally in \cite{YLspace}, it was shown that when $q$ and $k$ are odd positive integers $b(qk-1,-k)$ is an $L$-space link. This was conjectured to be a complete list of $2$-bridge $L$-space links, which is correct.
\begin{thm}\label{thm:bridge}
If $L$ is a $2$-bridge $L$-space link, then, after possibly reversing the orientation of one of the components, $L$ is equivalent to $b(qk - 1,-k)$ for some positive odd integers $q$ and $k$. 
\end{thm}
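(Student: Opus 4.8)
The plan is to parametrize $2$-bridge links by rational numbers and then to test each one against the Alexander-polynomial constraints of Corollary \ref{cor:alex2}, whittling the possibilities down to the family $b(qk-1,-k)$. Since the components of any $2$-bridge link are unknotted and the target family with $q,k$ odd always has $p=qk-1$ even, hence two components, I would first reduce to the case that $L=L_1\sqcup L_2$ has exactly two components (the $2$-bridge $L$-space knots being a separate, classical matter). I would fix the standard notation $b(p,q)$ with $p$ even and $0<q<p$, and record the equivalences $b(p,q)\simeq b(p,q')$ for $q'\equiv q^{\pm1}\pmod p$ together with the effect of reversing one component's orientation, so that the eventual identification of $p/q$ with $(qk-1)/(-k)$ is unambiguous.

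Next I would produce an explicit formula for the two-variable Alexander polynomial of $b(p,q)$. The cleanest route is the continued-fraction description: from the even continued fraction expansion of $p/q$ one reads off an alternating diagram, and $\Delta_L(u_1,u_2)$, hence $P^L_\emptyset$, is computed by a transfer-matrix product indexed by the continued-fraction digits, or equivalently from the sign sequence $\varepsilon_i=(-1)^{\lfloor iq/p\rfloor}$. Because both components are unknots, every proper sublink has trivial Alexander polynomial, so $P^L_{L_1}$ and $P^L_{L_2}$ collapse to the geometric expressions $u_r^{\,n/2}\sum_{i\ge0}u_r^{-i}$ depending only on the linking number $n=\mathrm{lk}(L_1,L_2)$, exactly as in the worked example above.

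Then I would impose Corollary \ref{cor:alex2}. Its hypothesis term ``the coefficient of $u_r^{s'_r}$ in $P^L_{L_{3-r}}$'' equals $1$ precisely when $s'_r\le n/2$ and $0$ otherwise, so the corollary forces a rigid pattern on the coefficient array of $P^L_\emptyset$: every nonzero coefficient is $\pm1$; along each line $u_r=\mathrm{const}$ the signs strictly alternate; and the top (largest-$u_{3-r}$-exponent) nonzero coefficient along that line is $+1$ when $s'_r\le n/2$ and $-1$ when $s'_r>n/2$. I would translate this pattern into a recursion on the continued-fraction digits of $p/q$ (equivalently on the partial sums of the $\varepsilon_i$) and show that the only digit sequences surviving are those giving $p/q=(qk-1)/(-k)$ with $q,k$ odd; the symmetry $q\leftrightarrow k$, reflecting $(-k)(-q)\equiv1\pmod{qk-1}$, should appear as a built-in symmetry of the admissible patterns.

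The main obstacle will be this last, combinatorial step: extracting a workable closed form for the coefficient array and proving that the alternation-plus-unit-coefficient conditions leave no room beyond the claimed family. The delicate bookkeeping lies in tracking the parity conditions that guarantee two components and the oddness of $q,k$, the half-integer shifts by $n/2$ (where an off-by-one would be fatal), and the allowed changes of representative under the $2$-bridge equivalences and the permitted orientation reversal, any of which could spuriously add or remove candidates if mishandled.
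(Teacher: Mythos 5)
Your first stage is fine in outline, and it is essentially what the paper does: compute $\Delta_L$ from the even continued--fraction expansion via Kanenobu's recursion (Theorem \ref{thm:K}) and feed it into the constraints of Corollary \ref{cor:alex2} (the paper's Lemma \ref{lem:polyres}, whose four-case diagonal analysis is already nontrivial). The genuine gap is your final step: the claim that ``the alternation-plus-unit-coefficient conditions leave no room beyond the claimed family'' is false, and no refinement of the digit combinatorics can repair it. After imposing \emph{all} of Corollary \ref{cor:alex2} --- unit coefficients, alternation along lines, and the sign of the leading coefficient on each line --- one is still left with infinitely many $2$-bridge links that are not $L$-space links. Concretely, in the paper's conventions the link $b(2w,1)$ has $P^L_\emptyset = -\sum_{i=0}^{w-1}(u_1u_2)^{\,i+1-w/2}$ and $P^L_{L_j}(u_r) = u_r^{-w/2}\sum_{i\geq 0}u_r^{-i}$, so every condition of the corollary holds for \emph{every} $w$, yet only $w=1$ (the Hopf link) is an $L$-space link; likewise $b(k^2-1,k)$ for all odd $k$, and $b(q'k+1,\pm k)$ for all odd $q',k$, pass the corollary with appropriate signs. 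These families fail to be $L$-space links for reasons invisible to the multivariable Alexander polynomials of sublinks and the linking number --- roughly, that data is insensitive to mirroring, while $L$-space-ness is not --- so a proof using only that data cannot exist.

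What is missing is the second, independent ingredient of the paper's proof. All the surviving candidates are alternating, so by Theorem $1.3$ of \cite{OZLinks} the group $\widehat{HFL}(L,\mbox{\textbf{s}})$ is thin and sits in Maslov grading $s_1+s_2+\frac{\sigma-1}{2}$, with $\sigma$ computed by Gordon--Litherland (the paper's Lemma \ref{lem:sig}). Independently, the $L$-space hypothesis determines $\widehat{HFL}$ \emph{together with its absolute grading} from the polynomial data, via the graph $\mathfrak{T}(L)$ and Lemmas \ref{lem:comp} and \ref{lem:hat}. Comparing the two computations in a single well-chosen Alexander grading yields $\mathbb{F}_{(1)}\cong\mathbb{F}_{(2w-1)}$, forcing $w=1$, for the families $b(qk-1,k)$ and $b(q'k+1,k)$, and yields a rank-two group $\mathbb{F}_{(0)}\oplus\mathbb{F}_{(-1)}$ in one Alexander grading for $b(q'k+1,-k)$ with $n>1$, contradicting thinness. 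Your proposal would need to add some such mirror-sensitive, Maslov-graded input (signature plus the alternating-link theorem, or an equivalent) before the classification can close; with Corollary \ref{cor:alex2} alone it cannot.
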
 

The organization of this paper is as follows. Section $2$ consists of some homological algebra needed to compute $HFL^-(L)$ from its Euler characteristic when $L$ is a $2$ or $3$ component $L$-space link. Section $3$ generalizes the arguments in \cite{OZLinks} to work on links. In Section $4$ Theorem \ref{thm:main} is proved, as well the the restrictions on the Alexander polynomials of $L$-spaces. In Section $5$ we prove the classification of $2$ bridge $L$-space links.

\begin{ack}
First and foremost, I would like to thank my advisor Yi Ni for sharing his knowledge and providing invaluable support. Much of the work in this paper is directly inspired by Yajing Liu's paper \cite{YLspace}. Finally I would like to thank the Troesh family for partially funding me during this project through the Troesh family fellowship. 
\end{ack}

\section{Homological Preliminaries}\label{sec:Homological}
\begin{defn} Let $E_n = \{0,1,2\}^n \subset \mathbb{R}^n$ where $n\geq 1$. We will denote $(0,0,\ldots,0)$, $(1,1,\ldots 1)$ and $(2,2,\ldots,2)$ by $\boldsymbol{0}$,$\boldsymbol{1}$ and $\boldsymbol{2}$ respectively. For any $\varepsilon \in E_n$, we denote by $\varepsilon_j$ the $j$th coordinate of $\varepsilon$ and by $e_j$ the $j$th elementary coordinate vector. We define an \textbf{n-dimensional short exact cube of chain complexes},  $\boldsymbol{C}$ (or \textbf{short exact cube} for short), as follows:
\begin{description}
\item[1] For every $\varepsilon \in E_n$ there is a chain complex $\boldsymbol{C}_\varepsilon$ over $\mathbb{F}$. 
\item[2] Suppose that $\varepsilon', \varepsilon$ and $\varepsilon''$ are in $E_n$ and only differ in the $j$th coordinate with $\varepsilon'_j = 0, \varepsilon_j = 1$ and $\varepsilon''_j = 2$.
Then there is a short exact sequence
\[\xymatrix{
0 \ar[r] &\boldsymbol{C}_{\varepsilon'} \ar[r]^{i_{\varepsilon'\varepsilon}} &\boldsymbol{C}_{\varepsilon} \ar[r]^{j_{\varepsilon\varepsilon''}}& \boldsymbol{C}_{\varepsilon''} \ar[r]& 0.\\
}\]
\item[3] The diagram made by all of the complexes $\boldsymbol{C}_\varepsilon$ and maps $i_{\varepsilon'\varepsilon}, j_{\varepsilon\varepsilon''}$ is commutative.
\end{description}
We will denote  $\boldsymbol{C}_{(2,2,\ldots,2)}$ as $\overline{\boldsymbol{C}}$ for short. We define the $\textbf{cube of inclusions}, \boldsymbol{C}^I$, to be the sub-diagram consisting of all the chain complexes $\boldsymbol{C}_\varepsilon$ with $\varepsilon \in \{0,1\}^n$ and the corresponding inclusion maps. We call a short exact cube \textbf{basic} if the following additional properties hold:
\begin{description}
\item[4] For $\varepsilon \in \{0,1\}^n$, $H_*(\boldsymbol{C}_\varepsilon) \cong \mathbb{F}[U]$ where multiplication by $U$ drops homological grading by $2$. We do not specify what the top grading for $\mathbb{F}[U]$ is, but we do require that it is even.
\item[5] All of the maps $(i_{\varepsilon'\varepsilon})_*$, induced by homology in the cube of inclusions are either isomorphisms in all degrees, or $(i_{\varepsilon'\varepsilon})_*$ is injective in all degrees and the top degree supported in $H_*(C_\varepsilon)$ is $2$ higher than the top degree supported in $H_*(C_{\varepsilon'})$. Alternatively, $UH_*(\boldsymbol{C}_\varepsilon) \cong H_*(\boldsymbol{C}_{\varepsilon'})$
\end{description}
\end{defn}
When the top grading for $\mathbb{F}[U]$ is $d$, we will write it as $\mathbb{F}_{(d)}[U]$. Similarly, $\mathbb{F}_{(d)}$ will be used to denote $\mathbb{F}$ supported in degree $d$.

Given an $n$ dimensional basic short exact cube $\boldsymbol{C}$, if we restrict to the commutative diagram coming from the subset of $E_n$ with $j$th coordinate $i$ where $i = 0,1$ or $2$, this can be thought of as an $n-1$ dimensional short exact cube of chain complexes which we will denote by ${}^j_i\boldsymbol{C}$. For any $j\in \{1,2,\ldots,n\}$, $\overline{{}^j_2\boldsymbol{C}}$ is the same as $\overline{\boldsymbol{C}}$; and ${}^j_0\boldsymbol{C}$ and ${}^j_1\boldsymbol{C}$ are basic.

\begin{lem}
Suppose $\boldsymbol{C}$ is a basic short exact cube of chain complexes. Also let $\varepsilon \in E_n$ have some coordinate equal to $2$. Then, $H_*(\boldsymbol{C}_\varepsilon)$ is finite dimensional.
\end{lem}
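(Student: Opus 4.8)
The plan is to induct on the number $m(\varepsilon)$ of coordinates of $\varepsilon$ equal to $2$; the hypothesis of the Lemma is exactly $m(\varepsilon)\geq 1$. The mechanism throughout is that a vertex carrying a $2$ in some slot appears as a \emph{quotient} in one of the defining short exact sequences. Concretely, if $\varepsilon_j = 2$, set $\alpha = \varepsilon - 2e_j$ and $\beta = \varepsilon - e_j$, so that $\alpha_j = 0$, $\beta_j = 1$, and $\alpha,\beta,\varepsilon$ agree in all other coordinates. Property $2$ then supplies
\[
0 \to \boldsymbol{C}_\alpha \xrightarrow{i_{\alpha\beta}} \boldsymbol{C}_\beta \xrightarrow{j_{\beta\varepsilon}} \boldsymbol{C}_\varepsilon \to 0,
\]
and I read off $H_*(\boldsymbol{C}_\varepsilon)$ from the associated long exact sequence.

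For the base case $m(\varepsilon) = 1$, both $\alpha$ and $\beta$ lie in $\{0,1\}^n$, hence are vertices of the cube of inclusions, so property $4$ gives $H_*(\boldsymbol{C}_\alpha)\cong H_*(\boldsymbol{C}_\beta)\cong\mathbb{F}[U]$ while property $5$ controls $(i_{\alpha\beta})_*$: it is either an isomorphism in every degree, or injective in every degree with cokernel a single copy of $\mathbb{F}$ concentrated in one degree. In either case $(i_{\alpha\beta})_*$ is injective, so in the long exact sequence the connecting map has image $\ker\,(i_{\alpha\beta})_* = 0$ and therefore vanishes. Consequently $H_*(\boldsymbol{C}_\varepsilon)$ is isomorphic to the cokernel of $(i_{\alpha\beta})_*$, which is either $0$ or a single copy of $\mathbb{F}$; in particular it is finite dimensional.

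For the inductive step $m(\varepsilon)\geq 2$, I pick any $j$ with $\varepsilon_j = 2$ and form $\alpha,\beta$ as above. Now $\alpha$ and $\beta$ are vertices of the same cube $\boldsymbol{C}$ still carrying $m(\varepsilon)-1\geq 1$ twos, so the inductive hypothesis applies to both and $H_*(\boldsymbol{C}_\alpha)$, $H_*(\boldsymbol{C}_\beta)$ are finite dimensional. The long exact sequence exhibits each $H_d(\boldsymbol{C}_\varepsilon)$ as an extension of a subspace of $H_{d-1}(\boldsymbol{C}_\alpha)$ by a quotient of $H_d(\boldsymbol{C}_\beta)$, whence $\dim H_*(\boldsymbol{C}_\varepsilon)\leq \dim H_*(\boldsymbol{C}_\beta) + \dim H_*(\boldsymbol{C}_\alpha) < \infty$, completing the induction.

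The only place genuine content enters, and hence the main obstacle, is the base case: the corners $\boldsymbol{C}_\alpha,\boldsymbol{C}_\beta$ have \emph{infinite} dimensional homology $\mathbb{F}[U]$, so finiteness of the quotient cannot come from a crude dimension count and must instead be extracted from the precise dichotomy in property $5$ together with the injectivity that kills the connecting homomorphism. Once the base case is secured, the inductive step is a routine sandwiching argument. I note that the observation recorded just before the Lemma, that ${}^j_0\boldsymbol{C}$ and ${}^j_1\boldsymbol{C}$ are again basic, lets one instead phrase the induction on the dimension $n$ (placing $\alpha,\beta$ inside honest lower-dimensional basic cubes); this is not strictly needed for the argument above but explains why the same base-case analysis recurs.
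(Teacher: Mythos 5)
Your proof is correct and follows essentially the same route as the paper: an induction on the number of coordinates equal to $2$, where the base case extracts finiteness (indeed $0$ or $\mathbb{F}$) from property $5$ via the long exact sequence, and the inductive step sandwiches $H_*(\boldsymbol{C}_\varepsilon)$ between the finite-dimensional homologies of the sub- and middle complexes. The paper states this more tersely (iterating the restrictions ${}^{j}_2\boldsymbol{C}$), but the argument is the same.
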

\begin{proof}
In the $n=1$ case, $H_*(\overline{\boldsymbol{C}})$ is either $\mathbb{F}$ or $0$ by property $5$ of basic short exact cubes. Thus, for any $n$-dimensional basic short exact cube $\boldsymbol{C}$, the homologies of the complexes in ${}^{j_1}_2\boldsymbol{C}^I$ are only either $\mathbb{F}$ or $0$ for any $j_1$. From here we can conclude that the homologies of the complexes in ${}^{j_2}_2{}^{j_1}_{2}\boldsymbol{C}^I$ are finite and continuing with this argument proves the claim.
\end{proof}

\begin{defn}\label{def:hc}
If $\boldsymbol{C}$ is a basic short exact cube, then we define the \textbf{hypercube graph of $\boldsymbol{C}$}, $HC(\boldsymbol{C})$, as a directed graph with labeled edges as follows:
\begin{itemize}
\item The vertices correspond to the elements of the set $\{0,1\}^n$.
\item There is a directed edge from $\varepsilon'$ to $\varepsilon$ if the two agree in all coordinates except the $j$th for some $1\leq j \leq n$ and $\varepsilon'_j = 0, \varepsilon_j = 1$. We will denote the edge from $\varepsilon'$ to $\varepsilon$ by $e_{\varepsilon'\varepsilon}$.
\item An edge $e_{\varepsilon'\varepsilon}$ is labeled with $0$ if $(i_{\varepsilon'\varepsilon})_*$ is an isomorphism in all degrees and $1$ otherwise. We will denote the label of an edge $e$ by $l_{\boldsymbol{C}}(e_{\varepsilon'\varepsilon})$ or $l(e_{\varepsilon'\varepsilon})$ when $\boldsymbol{C}$ is clear from context.
\end{itemize}
We will denote by $\widetilde{HC}(\boldsymbol{C})$ the subgraph of $HC(\boldsymbol{C})$ induced by all the vertices except the origin and we will refer to $\widetilde{HC}(\boldsymbol{C})$ as the \textbf{hypercube subgraph of $\boldsymbol{C}$}.
\end{defn}

\begin{rem}\label{rem:path}
Note that, since $\boldsymbol{C}^I$ is a commutative diagram, for any two directed paths between vertices the sum of the edge labels must be the same in $HC(\boldsymbol{C})$. If we are given a directed hypercube graph $G$ (directed as in definition \ref{def:hc}) with edge labels $0$ and $1$ that satisfies the property that the sum of the edge labels along any two directed paths between vertices is the same, we can easily construct a basic short exact cube with $G$ as its hypercube graph. Also note that $\chi(H_*(\overline{\boldsymbol{C}}))$ is completely determined by $HC(\boldsymbol{C})$.
\end{rem}

\begin{lem}\label{lem:HC}
Suppose that $\boldsymbol{C}$ is a basic short exact cube. There are only two mutually exclusive possibilities:
\begin{description}
\item[1] If $\boldsymbol{C}'$ is another basic short exact cube then $\widetilde{HC}(\boldsymbol{C'}) = \widetilde{HC}(\boldsymbol{C}) \Rightarrow HC(\boldsymbol{C}') = HC(\boldsymbol{C})$.
\item[2] Either all of the edges in $HC(\boldsymbol{C})\char92 \widetilde{HC}(\boldsymbol{C})$ (i.e. all the edges emerging from $\boldsymbol{0}$ )are labeled with $0$ or they are all labeled with $1$.
\end{description}
\end{lem}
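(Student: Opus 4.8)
The plan is to turn the commutativity of $\boldsymbol{C}^I$ into a potential function on the vertex set and read everything off from it. By Remark \ref{rem:path} the sum of edge labels along a directed path depends only on its endpoints, so I would set $\phi(\boldsymbol{0}) = 0$ and define $\phi(\varepsilon)$ to be the common value of this sum over directed paths from $\boldsymbol{0}$ to $\varepsilon$; then $l(e_{\varepsilon'\varepsilon}) = \phi(\varepsilon) - \phi(\varepsilon')$ for every edge. The edges of $HC(\boldsymbol{C})$ missing from $\widetilde{HC}(\boldsymbol{C})$ are exactly the $n$ edges $e_{\boldsymbol{0}\,e_j}$ leaving the origin, and their labels are the values $m_j := \phi(e_j) \in \{0,1\}$; every other label is already recorded in $\widetilde{HC}(\boldsymbol{C})$.

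The crux is that the differences $m_j - m_k$ are forced by $\widetilde{HC}(\boldsymbol{C})$ alone. Assuming $n \ge 2$, for $j \ne k$ I would run the potential around the commuting square on $\boldsymbol{0}, e_j, e_k, e_j+e_k$: comparing the two directed paths to $e_j+e_k$ gives $m_j - m_k = l(e_{e_k,\,e_j+e_k}) - l(e_{e_j,\,e_j+e_k})$, and both edges on the right join non-origin vertices, hence lie in $\widetilde{HC}(\boldsymbol{C})$. Consequently, fixing $\widetilde{HC}(\boldsymbol{C})$ pins down the tuple $(m_1,\dots,m_n)$ up to a single global additive shift.

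The dichotomy then comes from asking whether the $m_j$ are all equal. If some $m_j \ne m_k$, then — both lying in $\{0,1\}$ — their two values are determined, and the fixed differences propagate this to every $m_i$; so any basic cube $\boldsymbol{C}'$ with $\widetilde{HC}(\boldsymbol{C}') = \widetilde{HC}(\boldsymbol{C})$ is forced to share all origin labels with $\boldsymbol{C}$, yielding $HC(\boldsymbol{C}') = HC(\boldsymbol{C})$: this is case (1), and here the origin edges carry differing labels, so case (2) fails. If instead all $m_j$ agree, the origin edges share a common label, which is case (2); and raising $\phi$ by $1$ on every non-origin vertex leaves all internal labels unchanged while turning the common origin label from $0$ into $1$, so by the converse half of Remark \ref{rem:path} both the all-$0$ and all-$1$ assignments are realized by basic cubes, showing $\widetilde{HC}(\boldsymbol{C})$ does not determine $HC(\boldsymbol{C})$ and case (1) fails. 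Since the two cases partition according to whether the $m_j$ coincide, they are mutually exclusive and exhaustive; the degenerate $n=1$ case (no edges in $\widetilde{HC}(\boldsymbol{C})$, a single unconstrained origin label) falls under case (2).

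I do not anticipate a deep obstacle, but the step requiring the most care is establishing \emph{genuine} non-determinacy in case (2): it is not enough to observe that two label assignments are combinatorially possible, one must check that the globally shifted potential still yields a valid basic short exact cube, which is where the converse direction of Remark \ref{rem:path} together with the parity and ordering constraints of property $5$ are used. The secondary point to handle cleanly is confirming that the square relations only ever reference edges of $\widetilde{HC}(\boldsymbol{C})$, so that $m_j - m_k$ really is an invariant of the subgraph.
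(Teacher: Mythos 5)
Your proposal is correct and follows essentially the same route as the paper: the square relation through the origin (which shows the subgraph determines all pairwise differences of origin-edge labels) is exactly the commutativity argument the paper runs on the vertices $\boldsymbol{0},\varepsilon,\varepsilon',\varepsilon+\varepsilon'$, and your flipped-potential construction in case (2) is precisely the paper's use of Remark \ref{rem:path} to relabel all origin edges by $1-i$. The potential-function language is just a cleaner packaging of the same two ingredients, organized as a direct dichotomy rather than the paper's proof by contradiction.
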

\begin{proof}
Note first that, if possibility $1$ is satisfied, possibility $2$ cannot also be satisfied since if all the edges emerging from $\boldsymbol{0}$ are labeled with $i$ (where $i$ is $0$ or $1$) then we can get another valid labeling by simply replacing all the $i$'s emerging from the origin with $(1-i)$s (see Remark \ref{rem:path}).\\\\
Suppose that $\boldsymbol{C}$ and $\boldsymbol{C}'$ satisfy $\widetilde{HC}(\boldsymbol{C'}) = \widetilde{HC}(\boldsymbol{C})$, but $HC(\boldsymbol{C}') \neq HC(\boldsymbol{C})$. Then there must be some $\varepsilon'$ connected to the origin such that the edge from $\boldsymbol{0}$ to $\varepsilon'$ is labeled differently in $HC(\boldsymbol{C}')$ and $HC(\boldsymbol{C})$. Assume without loss of generality that $l_{\boldsymbol{C}}(e_{\boldsymbol{0}\varepsilon'}) = 1$ and $l_{\boldsymbol{C}'}(e_{\boldsymbol{0}\varepsilon'}) = 0$. Consider any other vertex $\varepsilon$ connected to the origin and consider $l_{\boldsymbol{C}}(e_{\boldsymbol{0}\varepsilon})$. We claim that $l_{\boldsymbol{C}}(e_{\boldsymbol{0}\varepsilon})$ must be $1$. To see this, consider the square subgraph induced by the vertices $\boldsymbol{0}, \varepsilon,\varepsilon'$ and $\delta = \varepsilon + \varepsilon'$. If $l_{\boldsymbol{C}}(e_{\boldsymbol{0}\varepsilon}) = 0$ then since $l_{\boldsymbol{C}}(e_{\boldsymbol{0}\varepsilon'}) = 1$ this forces $l_{\boldsymbol{C}}(e_{\varepsilon\delta}) = 1 = l_{\boldsymbol{C}'}(e_{\varepsilon\delta})$ and $l_{\boldsymbol{C}}(e_{\varepsilon'\delta}) =0 = l_{\boldsymbol{C}'}(e_{\varepsilon'\delta})$ (see the Remark \ref{rem:path}). However this is impossible because we know $l_{\boldsymbol{C}'}(e_{\boldsymbol{0}\varepsilon'}) = 0$ and if $0 = l_{\boldsymbol{C}'}(e_{\varepsilon'\delta}), 1 = l_{\boldsymbol{C}'}(e_{\varepsilon\delta})$ there is no label that works for $e_{\boldsymbol{0}\varepsilon'}$. So we get that in $\boldsymbol{C}$ every edge emerging from the origin must be labeled $1$ if one of them is. By the same argument, we can show that every edge emerging from the origin must be labeled $0$ if one of them is. This proves that the two cases stated in the Lemma are exhaustive and mutually exclusive.\\
\end{proof}

\begin{lem}\label{lem:echar}
Suppose that $\boldsymbol{A}$ and $\boldsymbol{B}$ are two basic short exact cubes satisfying $\widetilde{HC}(\boldsymbol{A}) = \widetilde{HC}(\boldsymbol{B})$, every edge in $HC(\boldsymbol{A})\char92 \widetilde{HC}(\boldsymbol{A})$ is labeled with $0$, and every edge in $HC(\boldsymbol{B})\char92 \widetilde{HC}(\boldsymbol{B})$ is labeled with $1$. Then,
\[\chi(H_*(\overline{\boldsymbol{A}})) = \chi(H_*(\overline{\boldsymbol{B}})) + (-1)^n.\]
\end{lem}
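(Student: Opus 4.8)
The plan is to induct on the dimension $n$, peeling off one coordinate at a time; the engine of the induction is a recursion for $\chi(H_*(\overline{\boldsymbol{C}}))$ coming from the short exact sequences built into the cube. Fixing the last coordinate and letting it run through $0,1,2$, Property $2$ gives the short exact sequence of chain complexes
\[0 \to \boldsymbol{C}_{(2,\ldots,2,0)} \to \boldsymbol{C}_{(2,\ldots,2,1)} \to \boldsymbol{C}_{(2,\ldots,2,2)} \to 0,\]
whose three terms are exactly $\overline{{}^n_0\boldsymbol{C}}$, $\overline{{}^n_1\boldsymbol{C}}$ and $\overline{\boldsymbol{C}}$. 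For $n \geq 2$ all three have finite-dimensional homology (by the finiteness lemma, since each corner has a coordinate equal to $2$), so Euler characteristic is additive along the associated long exact sequence and I obtain the recursion
\[\chi(H_*(\overline{\boldsymbol{C}})) = \chi(H_*(\overline{{}^n_1\boldsymbol{C}})) - \chi(H_*(\overline{{}^n_0\boldsymbol{C}})).\]

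First I would dispose of the base case $n=1$ directly: here $\boldsymbol{A}$ and $\boldsymbol{B}$ each consist of a single short exact sequence $0 \to \boldsymbol{C}_0 \to \boldsymbol{C}_1 \to \overline{\boldsymbol{C}} \to 0$, and the long exact sequence shows $H_*(\overline{\boldsymbol{C}})$ is $0$ when the unique edge is labeled $0$ and a single copy of $\mathbb{F}$ in an even degree (the top degree of $\boldsymbol{C}_1$) when it is labeled $1$. Since $\boldsymbol{A}$'s edge is labeled $0$ and $\boldsymbol{B}$'s is labeled $1$, this gives $\chi(H_*(\overline{\boldsymbol{A}})) - \chi(H_*(\overline{\boldsymbol{B}})) = 0 - 1 = (-1)^1$.

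For the inductive step with $n \geq 2$, I would apply the recursion to both $\boldsymbol{A}$ and $\boldsymbol{B}$ and compare the two slices separately. The top slices ${}^n_1\boldsymbol{A}$ and ${}^n_1\boldsymbol{B}$ are basic $(n-1)$-cubes, and crucially every edge of either is an edge of the ambient cube joining two vertices whose last coordinate is $1$ — hence a non-origin edge. Since $\widetilde{HC}(\boldsymbol{A}) = \widetilde{HC}(\boldsymbol{B})$, these slices have identical hypercube graphs, so by Remark \ref{rem:path} their corner Euler characteristics agree: $\chi(H_*(\overline{{}^n_1\boldsymbol{A}})) = \chi(H_*(\overline{{}^n_1\boldsymbol{B}}))$. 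The bottom slices ${}^n_0\boldsymbol{A}$ and ${}^n_0\boldsymbol{B}$ are again basic $(n-1)$-cubes; their non-origin edges are non-origin edges of the ambient cubes and so coincide, while the edges out of their common origin $\boldsymbol{0}$ are precisely the origin edges of $\boldsymbol{A}$ and $\boldsymbol{B}$ in directions $1,\ldots,n-1$, hence all labeled $0$ for $\boldsymbol{A}$ and all labeled $1$ for $\boldsymbol{B}$. Thus $({}^n_0\boldsymbol{A}, {}^n_0\boldsymbol{B})$ satisfies the hypotheses of the lemma in dimension $n-1$, and the inductive hypothesis gives $\chi(H_*(\overline{{}^n_0\boldsymbol{A}})) - \chi(H_*(\overline{{}^n_0\boldsymbol{B}})) = (-1)^{n-1}$.

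Feeding these two facts into the recursion yields
\[\chi(H_*(\overline{\boldsymbol{A}})) - \chi(H_*(\overline{\boldsymbol{B}})) = \bigl(\chi(H_*(\overline{{}^n_1\boldsymbol{A}})) - \chi(H_*(\overline{{}^n_1\boldsymbol{B}}))\bigr) - \bigl(\chi(H_*(\overline{{}^n_0\boldsymbol{A}})) - \chi(H_*(\overline{{}^n_0\boldsymbol{B}}))\bigr) = -(-1)^{n-1} = (-1)^n,\]
completing the induction. The part needing the most care is the bookkeeping that distinguishes origin from non-origin edges after slicing: the whole argument hinges on the top-slice edges all being non-origin (so the two cubes look identical there) while the bottom slice carries exactly the origin-edge discrepancy down one dimension. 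Verifying finite-dimensionality so that $\chi$ is additive, and checking that the slices remain basic, are the remaining routine points.
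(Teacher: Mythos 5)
Your proof is correct and takes essentially the same approach as the paper's: induction on $n$, with the Euler-characteristic recursion $\chi(H_*(\overline{\boldsymbol{C}})) = \chi(H_*(\overline{{}^j_1\boldsymbol{C}})) - \chi(H_*(\overline{{}^j_0\boldsymbol{C}}))$ coming from the built-in short exact sequence, the top slice contributing nothing because its edges all lie in the shared subgraph $\widetilde{HC}$, and the bottom slice inheriting the hypotheses of the lemma one dimension down. The only differences are cosmetic: you slice along the last coordinate instead of the first, and you spell out the finiteness and origin-versus-non-origin bookkeeping that the paper leaves implicit.
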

\begin{proof}
We will prove this inductively. For the $n=1$ case using the fact that both $H_*(\boldsymbol{A}_0)$ and $H_*(\boldsymbol{B}_0)$ have even top grading we directly compute that $\chi(H_*(\overline{\boldsymbol{A}})) = 0$ and $\chi(H_*(\overline{\boldsymbol{B}})) = 1$.
Now we can proceed with the induction. Note that we have:
\[\chi(\overline{\boldsymbol{A}}) = \chi(\overline{{}^1_1\boldsymbol{A}}) - \chi(\overline{{}^1_0\boldsymbol{A}})\; \mbox{and}\; \chi(\overline{\boldsymbol{B}}) = \chi(\overline{{}^1_1\boldsymbol{B}}) - \chi(\overline{{}^1_0\boldsymbol{B}}).\] 
$\chi(\overline{{}^1_1\boldsymbol{A}}) = \chi(\overline{{}^1_1\boldsymbol{B}})$ since they are both completely determined by the hypercube subgraph $\widetilde{HC}$ and also $\chi(\overline{{}^1_0\boldsymbol{A}}) = \chi(\overline{{}^1_0\boldsymbol{B}}) + (-1)^{n-1}$ by induction.
\end{proof}

\begin{lem}\label{lem:comp}
Suppose that $\boldsymbol{C}$ is a $1,2$ or $3$-dimensional basic cube of chain complexes. Then we can compute $H_*(\overline{\boldsymbol{C}})$ as a graded vector space if we know $H_*(\boldsymbol{C}_\varepsilon)$ for any $\boldsymbol{C}_\varepsilon$ in the cube of inclusions $\boldsymbol{C}^I$, as well as all the maps $(i_{\varepsilon'\varepsilon})_*$ induced by homology in the cube of inclusions $\boldsymbol{C}^I$.
\end{lem}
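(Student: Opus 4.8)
The plan is to prove the statement by induction on the dimension $n$, peeling off one coordinate at a time. Throughout, I would use that slicing a basic cube in the $j$th coordinate produces a short exact sequence of $(n-1)$-dimensional cubes whose terminal complexes fit into
\[ 0 \to \overline{{}^j_0\boldsymbol{C}} \to \overline{{}^j_1\boldsymbol{C}} \to \overline{\boldsymbol{C}} \to 0, \]
and that, as noted just before the lemma, ${}^j_0\boldsymbol{C}$ and ${}^j_1\boldsymbol{C}$ are themselves basic, with cube-of-inclusions data inherited from $\boldsymbol{C}$.

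For the base case $n=1$ the cube is a single short exact sequence $0 \to \boldsymbol{C}_0 \to \boldsymbol{C}_1 \to \overline{\boldsymbol{C}} \to 0$. Property $5$ forces $(i_{01})_*$ to be injective on homology, so the connecting map vanishes and $H_*(\overline{\boldsymbol{C}}) \cong \operatorname{coker}\big((i_{01})_*\big)$, which is manifestly read off from the cube of inclusions. For the inductive step I would feed the displayed short exact sequence into its long exact sequence in homology. Working over the field $\mathbb{F}$, this breaks into short exact sequences $0 \to \operatorname{coker}(f_n) \to H_n(\overline{\boldsymbol{C}}) \to \ker(f_{n-1}) \to 0$ that split, so that degreewise
\[ H_n(\overline{\boldsymbol{C}}) \cong \operatorname{coker}(f_n) \oplus \ker(f_{n-1}), \]
where $f_* \colon H_*(\overline{{}^j_0\boldsymbol{C}}) \to H_*(\overline{{}^j_1\boldsymbol{C}})$ is induced by the axis-$j$ inclusion. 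The groups $H_*(\overline{{}^j_0\boldsymbol{C}})$ and $H_*(\overline{{}^j_1\boldsymbol{C}})$ are computable by the inductive hypothesis, and since we work over a field only the rank of $f_*$ in each grading is needed.

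Consequently everything reduces to determining $f_*$ from the cube of inclusions, and this is the step I expect to be the main obstacle. The conclusion of the lemma for $(n-1)$-cubes is by itself too weak for this, so I would strengthen the inductive hypothesis to also record the maps that the ambient cube structure induces on the $H_*(\overline{{}^j_i\boldsymbol{C}})$. The mechanism is naturality: the axis-$j$ inclusions organise the two one-dimension-lower computations into a morphism of short exact sequences of cubes, exhibiting $f_*$ as the map induced on the relevant cokernels and kernels by a map sitting one slice lower. Iterating this, every map that appears is eventually an induced map on the (co)kernels of a cube-of-inclusions map, where the injectivity supplied by Property $5$ pins down the intervening connecting homomorphisms.

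The bookkeeping required to check that these induced maps are genuinely unambiguous is precisely what confines the clean argument to $n \leq 3$: for $n = 1, 2$ it is immediate, while for $n = 3$ the splitting $\operatorname{coker} \oplus \ker$ produced one level down must be fed back in carefully, and there are only finitely many configurations of the hypercube graph to verify. As a consistency check at each stage one can compare $\chi(H_*(\overline{\boldsymbol{C}}))$ against the value predicted by $HC(\boldsymbol{C})$ via Remark \ref{rem:path}.
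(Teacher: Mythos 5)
Your reduction---slice the cube, feed the short exact sequence $0 \to \overline{{}^j_0\boldsymbol{C}} \to \overline{{}^j_1\boldsymbol{C}} \to \overline{\boldsymbol{C}} \to 0$ into its long exact sequence, and note that over $\mathbb{F}$ everything comes down to the graded ranks of $f_* \colon H_*(\overline{{}^j_0\boldsymbol{C}}) \to H_*(\overline{{}^j_1\boldsymbol{C}})$---is the same skeleton the paper uses, and your base case $n=1$ agrees with the paper's. The gap is in the step you yourself flag as the main obstacle: determining $f_*$ from the cube-of-inclusions data. Naturality gives a morphism of long exact sequences, hence induced maps on the cokernel and kernel pieces, but it does \emph{not} determine $f_*$: with respect to the (non-canonical) splittings $H_*(\overline{{}^j_i\boldsymbol{C}}) \cong \operatorname{coker} \oplus \ker$, the map $f_*$ also has an off-diagonal component from the $\ker$ summand of the source to the $\operatorname{coker}$ summand of the target, and no amount of naturality pins that component down. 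This is not a pedantic worry: Remark \ref{rem:4fail} exhibits a basic $4$-dimensional cube in which every homology group and every induced map in the cube of inclusions is known, yet $f_*$ may have rank $0$ or rank $1$ without violating commutativity, so $H_*(\overline{\boldsymbol{C}})$ is genuinely undetermined. Consequently your iterated-naturality mechanism cannot be a complete argument for any $n$; whatever finishes the proof must use something special to $n \leq 3$, and your proposal defers exactly that step (``the bookkeeping \ldots is precisely what confines the clean argument to $n \leq 3$'') without carrying it out. For $n=2$ your mechanism does happen to be rigorous, because property $5$ makes the slice homologies pure cokernels and the ker/coker ambiguity never arises; it is at $n=3$, where the $2$-dimensional slices can contribute both a cokernel and a kernel piece, that the missing verification is the entire content of the lemma.

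The paper closes this gap by a finite case check organized by the hypercube graph. For $n=2$ it enumerates all six possible labelings (Figure \ref{fig:lem2}) and records $H_*(\overline{\boldsymbol{C}})$ for each. For $n=3$ it first disposes of every cube having a facet of type $(1)$, $(2)$ or $(3)$ of Figure \ref{fig:lem2}: such a facet forces $H_*(\overline{{}^j_0\boldsymbol{C}}) = 0$ or $H_*(\overline{{}^j_1\boldsymbol{C}}) = 0$ for some $j$, so $f_* = 0$ for trivial reasons and the long exact sequence collapses to give the answer. Only five hypercube graphs survive this reduction (Figure \ref{fig:lem3}), and these are computed one by one; in each of them the grading shifts forced by property $5$ keep the source and target of $f_*$ in disjoint sets of degrees, so $f_*$ vanishes and the ranks are determined. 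To repair your proof you would need either to reproduce this enumeration, or to give an honest degree-based argument that for $n \leq 3$ the potentially ambiguous components of $f_*$ never affect its graded ranks---an argument that must visibly fail at $n = 4$, where (as in Remark \ref{rem:4fail}) the degree supports of the two slice homologies begin to overlap.
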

\begin{proof}
 When $n = 1$, we have a short exact sequence:
\[\xymatrix{
0 \ar[r] &\boldsymbol{C}_{0} \ar[r]^{i_{01}} &\boldsymbol{C}_{1} \ar[r]^{j_{12}}& \overline{\boldsymbol{C}} \ar[r]& 0.\\
}\]
Thus if $(i_{01})_*$ is an isomorphism, we get that $H_*(\overline{\boldsymbol{C}}) \cong 0$; and if not, then $H_*(\overline{\boldsymbol{C}}) \cong \mathbb{F}$.
For the $n=2$ case we show all possibilities for $HC$ in Figure \ref{fig:lem2}. 
\begin{figure}[h]
    \centering
		\includegraphics[scale = .8]{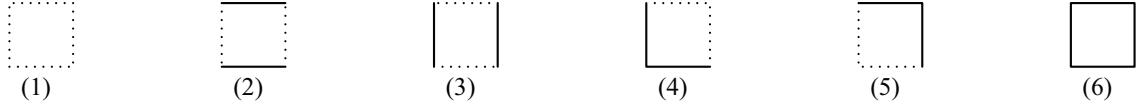}
    \caption{All possible hypercube graphs in the $n=2$ case ((0,0) is on the bottom left). The dotted lines denote edges labeled with $0$ and the solid lines are edges labeled with $1$}
		\label{fig:lem2}
\end{figure}

If we assume that $H_*(\boldsymbol{C}_{\boldsymbol{0}}) \cong \mathbb{F}_{(0)}[U]$, then $H_*(\overline{\boldsymbol{C}})$ is $0,0,0,\mathbb{F}_{(3)},\mathbb{F}_{(2)},\mathbb{F}_{(4)}\oplus\mathbb{F}_{(3)}$ for the $6$ possibilities shown in Figure \ref{fig:lem2}, respectively.\\
In the $n=3$ case we only need to consider those $HC$ which do not have a facet equal to $(1), (2)$ or $(3)$ in Figure \ref{fig:lem2} , as otherwise we would have for some $j = 1,2$ or $3$, $H_*(\overline{{}^j_0\boldsymbol{C}}) = 0$ or $H_*(\overline{{}^j_1\boldsymbol{C}}) = 0$. This would allow us to compute $H_*(\overline{\boldsymbol{C}})$ from the long exact sequence for the short exact sequence: 
\[\xymatrix{
0 \ar[r] &\overline{{}^j_0\boldsymbol{C}} \ar[r] &\overline{{}^j_1\boldsymbol{C}} \ar[r] & \overline{\boldsymbol{C}} \ar[r]& 0.\\
}\]
We show all the possibilities for $HC$ when $n=3$ and none of the facets are as $(1), (2)$ or $(3)$ of Figure \ref{fig:lem2} in Figure \ref{fig:lem3}.

\begin{figure}[h]
    \centering
		\includegraphics{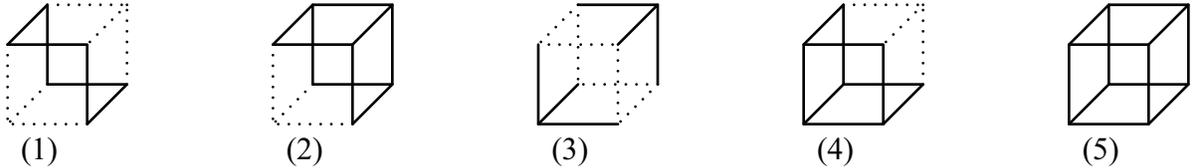}
    \caption{Some hypercube graphs in the $n=3$ case. Once again the $(0,0,0)$ is on the bottom left and the dotted lines denote edges labeled with $0$ and the solid lines are edges labeled with $1$}
		\label{fig:lem3}
\end{figure}
If we assume that $H_*(\boldsymbol{C}_{\boldsymbol{0}}) \cong \mathbb{F}_{(0)}[U]$, then $H_*(\overline{\boldsymbol{C}})$ is $\mathbb{F}_{(3)}^2,\mathbb{F}_{(4)}\oplus \mathbb{F}_{(3)}^2, \mathbb{F}_{(4)}^2, \mathbb{F}_{(5)}^2\oplus\mathbb{F}_{(4)}, \mathbb{F}_{(6)}\oplus\mathbb{F}_{(5)}^2\oplus\mathbb{F}_{(4)}$ for the five cases shown, respectively.
\end{proof}

\begin{rem}\label{rem:4fail}
The above Lemma does not hold when $n \geq 4$. Consider the basic $4$ dimensional short exact cube $\boldsymbol{C}$ where every edge of $HC(\boldsymbol{C})$ is labeled with $1$ and $H_*(\boldsymbol{C}_{\boldsymbol{0}}) \cong \mathbb{F}_{(0)}[U]$. For any $j_1, j_2 \in \{1,2,3,4\}$ we have $H_*({}^{j_1}_2{}^{j_2}_2\boldsymbol{C}_{00}) \cong \mathbb{F}_4\oplus \mathbb{F}_3, H_*({}^{j_1}_2{}^{j_2}_2\boldsymbol{C}_{10}) \cong H_*({}^{j_1}_2{}^{j_2}_2\boldsymbol{C}_{01}) \cong \mathbb{F}_6\oplus \mathbb{F}_5$ and $H_*({}^{j_1}_2{}^{j_2}_2\boldsymbol{C}_{11}) \cong \mathbb{F}_8\oplus \mathbb{F}_7$. It follows that all maps on homology in the cube of inclusions for ${}^{j_1}_2{}^{j_2}_2\boldsymbol{C}$ are trivial. So for all $j$ the map from $H_*(\overline{{}^{j}_0\boldsymbol{C}}) \cong \mathbb{F}_{(6)}\oplus\mathbb{F}_{(5)}^2\oplus\mathbb{F}_{(4)}$ to $H_*(\overline{{}^{j}_1\boldsymbol{C}})\cong \mathbb{F}_{(8)}\oplus\mathbb{F}_{(7)}^2\oplus\mathbb{F}_{(6)}$ may be of rank $0$ or $1$ without violating commutativity. Thus $H_*(\overline{C})$ may be either $\mathbb{F}_{(8)}\oplus\mathbb{F}_{(7)}^3\oplus\mathbb{F}_{(6)}^3\oplus \mathbb{F}_{(5)}$ or $\mathbb{F}_{(8)}\oplus\mathbb{F}_{(7)}^2\oplus\mathbb{F}_{(6)}^2\oplus \mathbb{F}_{(5)}$. See also Theorem $1.5.1.d$ in \cite{GNLspace}.
\end{rem}

\section{The Chain Complex}\label{sec:Chain}

For a complete overview of Heegaard Floer homology, admissible multi-pointed Heegaard diagrams for knots and links, the definition of $L$-spaces and their relationship with the Heegaard Floer complex, see \cite{OZOrig},\cite{OZApps}, \cite{OZKnot}, \cite{OZLinks}, \cite{OZLspace}, \cite{OZIntsurg}, \cite{OZTnorm} and  \cite{MOComb}. Suppose that $L \subset S^3$ is an oriented $l$ component link. In this paper, we define a multi-pointed Heegaard diagram $\mathcal{H} = (\Sigma_g, \boldsymbol{\alpha},\boldsymbol{\beta},$\textbf{w},\textbf{z}$)$ for $L$ with the following properties\footnote{ This is identical to the definition given in \cite{OZLinks} except we want to allow ``spare'' basepoints that will arise in the proof of the main theorem.}:
\begin{itemize}
\item $\Sigma_g$ is a closed oriented surface of genus $g$.
\item $\boldsymbol{\alpha} = (\alpha_1, \ldots , \alpha_{g+m-1})$ is a collection of disjoint simple closed curves which span a $g$-dimensional lattice of $H_1(\Sigma, \mathbb{Z})$, and the same goes for $\beta = (\beta_1, \ldots, \beta_{g+m-1})$. Thus, $\boldsymbol{\alpha}$ and $\boldsymbol{\beta}$ specify handlebodies $U_\alpha$ and $U_\beta$. We require that $U_\alpha \cup_\Sigma U_\beta = S^3$.
\item \textbf{z} $= (z_1,z_2,\ldots, z_l)$ and \textbf{w} $ = (w_1,w_2,\ldots, w_m)$ are both collections of basepoints in $\Sigma$ where $l \leq m$. We will call $w_{l+1}, w_{l+2}, \ldots w_m$ free basepoints.
\item If $\{A_i\}_{i=1}^m$ and $\{B_i\}_{i=1}^m$ are the connected components of $\Sigma\char92\left(\bigcup_{i=1}^{g+m-1} \alpha _i\right)$ and $\Sigma\char92\left(\bigcup_{i=1}^{g+m-1} \beta _i\right)$, respectively then $w_i \in A_i \cap B_i$ for any $1 \leq i \leq m$; and there is some permutation $\sigma$ of $\{1,\ldots, l\}$ such that $z_i \in A_i \cap B_{\sigma(i)}$ when $1 \leq i \leq l$.
\item The diagram as defined so far specifies the link $L \subset S^3$.
\item We require that all of the $\alpha$ and $\beta$ curves intersect transversely and that every non-trivial periodic domain have both positive and negative local multiplicities (see section $3.4$ of \cite{OZLinks}). 
\end{itemize}

Also recall that for every intersection point \textbf{x}$ \in \mathbb{T}_\alpha \cap \mathbb{T}_\beta$ there is a Maslov grading $M($\textbf{x}$)$ and an Alexander multigrading $A_i($\textbf{x}$) \in \mathbb{H}(L)_i$.

\begin{defn} Suppose we have a multi-pointed Heegaard diagram $\mathcal{H} = (\Sigma_g, \boldsymbol{\alpha},\boldsymbol{\beta},$\textbf{w},\textbf{z}$)$ for the pair $L$ as above. We define the complex $CF^-(\mathcal{H})$ to be free over $\mathbb{F}$ with generators $[$\textbf{x}$,i_1,j_1,\ldots, i_l$ $,j_l, i_{l+1},\ldots, i_m]$ where $i_k \in \mathbb{Z}_{\leq 0}$, and  $j_k \in \mathbb{Q}$ satisfying $j_k - i_k = A_k($\textbf{x}$)$. The differential is, as usual, given by counting holomorphic disks:
\[\partial[\mbox{\textbf{x}},i_1,j_1,\ldots, i_l,j_l, i_{l+1}, \ldots, i_m] =\]
\[ \sum_{\mbox{\scriptsize{\textbf{y}}} \in \mathbb{T}_\alpha \cap \mathbb{T}_\beta}\sum_{\substack{\phi\in\pi_2(\mbox{\scriptsize{\textbf{x},\textbf{y}}})\\ \mu(\phi) = 1}} [\mbox{\textbf{y}}, i_1-n_{w_1}(\phi), j_1 - n_{z_1}(\phi),\ldots , i_l-n_{w_l}(\phi), j_l - n_{z_l}(\phi),i_{l+1} - n_{w_{l+1}}(\phi), \ldots, i_m - n_{w_m}(\phi)].\]
In the notation of \cite{MOSurg}, this differential and Heegaard diagram correspond to the maximally colored case.
\end{defn}
The complex $CF^-$ is also an $\mathbb{F}[U_1,U_2,\ldots, U_m]$-module. The action of $U_k$ for $1\leq k \leq l$ is given by:
\[U_k[\mbox{\textbf{x}},i_1,j_1,\ldots,i_k,j_k,\ldots i_l,j_l,i_{l+1}\ldots,i_m] = [\mbox{\textbf{x}},i_1,j_1,\ldots,i_k-1,j_k-1,\ldots i_l,j_l,i_{l+1},\ldots,i_m];\]
and for $l<k<m$ is given by;
\[U_k[\mbox{\textbf{x}},i_1,j_1,\ldots, i_l,j_l, i_{l+1}, \ldots, i_k, \ldots, i_m] = [\mbox{\textbf{x}},i_1,j_1,\ldots, i_l,j_l,i_{l+1} \ldots, i_{k}-1, \ldots, i_m].\]
We define the Maslov grading of $[\mbox{\textbf{x}},i_1,j_1,\ldots,i_k,j_k,\ldots i_l,j_l,i_{l+1}\ldots,i_m]$ by setting it equal to $M($\textbf{x}$)$ when all the $i_k$ are $0$ and letting the action of each $U_i$ drop the maslov grading by $2$. 
Note that both as a complex and $\mathbb{F}[U_1,U_2,\ldots, U_m]$-module $CF^-$ is isomorphic to $CF^-$ as defined in \cite{OZLinks} via the isomorphism induced by
\[[\mbox{\textbf{x}},i_1,j_1,\ldots, i_l,j_l, i_{l+1}, \ldots, i_m]\mapsto U_1^{-i_1}\ldots U_l^{-i_m}\mbox{\textbf{x}}.\]
And so it follows that $CF^-$ is a chain complex with homology $HF^-(S^3)$. 
\begin{defn} Suppose that we have a Heegaard diagram $\mathcal{H}$ for $L \subset S^3$ as above. Fix some \textbf{s} $= (s_1,\ldots, s_l) \in \mathbb{H}(L)$. Now suppose that we restrict $CF^-(\mathcal{H})$ to only those generators $[\mbox{\textbf{x}},i_1,j_1,\ldots, i_l$ $,j_l, i_{l+1}, \ldots, i_m]$ which satisfy $A_k($\textbf{x}$) = j_k$ and force the differential to only count holomorphic disks $\phi$ with $n_{z_k}(\phi) = 0$ when $1\leq k \leq l$. Then this quotient complex of $CF^-(\mathcal{H})$ will be denoted by $CFL^-(\mathcal{H}$,\textbf{s}$)$. Note that $CFL^-$ inherits an $\mathbb{F}[U_1,\ldots U_m]$ module action from $CF^-$.
\end{defn}
\begin{thm} If $CFL^-(\mathcal{H}$, \textnormal{\textbf{s}}$)$ is as above, then its homology is $HFL^-(S^3,L$, \textnormal{\textbf{s}}$)$
\end{thm}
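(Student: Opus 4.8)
The plan is to recognize $CFL^-(\mathcal H,\mathbf s)$ as the associated graded object of the Alexander multi-filtration on $CF^-(\mathcal H)$, and then to invoke the definition of $HFL^-$ from \cite{OZLinks} together with the isomorphism $CF^-(\mathcal H)\cong CF^-$ already recorded above. First I would make the filtration explicit: for $\mathbf t=(t_1,\dots,t_l)$ let $\mathcal F(\preceq\mathbf t)$ be the subspace of $CF^-(\mathcal H)$ spanned by the generators with $j_k\le t_k$ for $1\le k\le l$. Since every disk $\phi$ counted by $\partial$ has $n_{z_k}(\phi)\ge 0$, the differential never raises any $j_k$, so the $\mathcal F(\preceq\mathbf t)$ form an increasing $\mathbb Z^l$-indexed family of subcomplexes; under the identification with the complex of \cite{OZLinks} this is exactly the Alexander filtration.

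Next I would verify that $CFL^-(\mathcal H,\mathbf s)$ is precisely the associated graded piece at $\mathbf s$, namely $\mathcal F(\preceq\mathbf s)\big/\sum_{k}\mathcal F(\preceq\mathbf s-e_k)$. The generators surviving in this quotient are exactly those with $j_k=s_k$ for every $k$, which are the generators retained in the definition of $CFL^-(\mathcal H,\mathbf s)$. Moreover any disk with $n_{z_k}(\phi)>0$ for some $k\le l$ strictly lowers the filtration level and hence induces the zero map on the associated graded, so the induced differential counts only disks with $n_{z_k}(\phi)=0$ for all $k\le l$, which is precisely the differential imposed in the definition. Thus $CFL^-(\mathcal H,\mathbf s)$ is canonically $\mathrm{gr}_{\mathbf s}\,CF^-(\mathcal H)$, and when there are no free basepoints (that is, $m=l$, the setting of \cite{OZLinks}) its homology is $HFL^-(S^3,L,\mathbf s)$ by the very definition of the latter.

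The remaining point, which I expect to be the crux, is to remove the dependence on the free basepoints $w_{l+1},\dots,w_m$: these are absent from the construction of \cite{OZLinks}, so they must be shown not to affect the homology. I would argue by induction on $m-l$, deleting one free basepoint at a time. Deleting $w_m$ is a free (de)stabilization of $\mathcal H$, whose invariance in the maximally colored setting is supplied by \cite{MOSurg}; the essential observation is that such a stabilization can be taken to be supported in a region disjoint from all of the $z_k$, so the move is compatible with the Alexander filtration and therefore descends to the associated graded. Consequently the filtered chain homotopy type of $CF^-(\mathcal H)$, and with it the homology of $CFL^-(\mathcal H,\mathbf s)$, is unchanged by deleting $w_m$. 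The delicate bookkeeping will be to confirm that the stabilizing curves may be chosen away from the $z$-basepoints and to track the $\mathbb F[U_1,\dots,U_m]$-module structure, in particular that the new variable $U_m$ acts freely without disturbing the Alexander grading. Granting this, the induction reduces to the case $m=l$ treated above, completing the proof.
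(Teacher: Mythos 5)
Your first two paragraphs agree with the paper's starting point: $CFL^-(\mathcal H,\mathbf s)$ is the associated graded piece of the Alexander filtration at $\mathbf s$, and when $m=l$ it is literally the complex computing $HFL^-(S^3,L,\mathbf s)$ in \cite{OZLinks}, so the whole content of the theorem is independence from the free basepoints. The gap is in your induction step. You assert that deleting $w_m$ ``is a free (de)stabilization of $\mathcal H$,'' but an arbitrary admissible diagram is not in standard stabilized form near $w_m$: the region containing $w_m$ need not be a disk cut off by a pair of isotopic $\alpha$- and $\beta$-curves, so there is no destabilization available to perform, and one cannot simply erase a basepoint (the counts of curves, regions and basepoints would no longer match). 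What is true is the connectivity statement the paper imports from Proposition 4.13 of \cite{MOSurg} (resting on Lemma 2.4 of \cite{MOComb}): $\mathcal H$ is related to a diagram with no free basepoints by a sequence of isotopies, handleslides, index one/two stabilizations, their inverses, \emph{and} free index zero/three stabilizations. Your induction therefore cannot bypass the moves of the first three kinds, and you then owe a proof that they preserve the homology of $CFL^-$ (the paper cites \cite{OZOrig} and Proposition 3.9 of \cite{OZLinks} for this).

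Second, your claim that a free stabilization leaves ``the filtered chain homotopy type of $CF^-(\mathcal H)$'' unchanged is not the right mechanism, and as stated it is not even well posed: the stabilized complex is a module over a polynomial ring with one more variable $U_r$, so it is not chain homotopy equivalent to the old complex over any common ground ring, and it is strictly larger. The correct statement, which is how the paper argues (via the argument of Lemma 6.1 of \cite{OZLinks}), is that the stabilized complex is the mapping cone of
\[
CFL^-(\mathcal H_1,\mathbf s)[U_r]\xrightarrow{\;U_r-U_k\;}CFL^-(\mathcal H_1,\mathbf s)[U_r],
\]
where $w_k$ is some basepoint of $\mathcal H_1$; since $U_r-U_k$ is injective, the cone has the same homology as $CFL^-(\mathcal H_1,\mathbf s)[U_r]/(U_r-U_k)\cong CFL^-(\mathcal H_1,\mathbf s)$. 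Note that this also corrects your assertion that ``the new variable $U_m$ acts freely'': when $k\le l$ the variable $U_k$ acts as zero on $CFL^-(\mathcal H_1,\mathbf s)$, so the new variable acts trivially on homology, not freely. Without the connectivity result and without this cone argument the induction does not get off the ground; with both of them supplied, your outline becomes essentially the paper's proof.
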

\begin{proof}
(This is very similar to Proposition $5.8$ in \cite{YLspace}.) If the diagram $\mathcal{H}$ has no free points, then $CFL^-(\mathcal{H}$,\textbf{s}$)$ is the same as the complex computing $HFL^-$ in \cite{OZLinks}. so we only need to show what happens in the case when there are free basepoints in $\mathcal{H}$. Suppose that $\mathcal{H}'$ is another Heegaard diagram that only has $l$-pairs of basepoints (one pair for each link component) and no others. Then we claim that $\mathcal{H}$ can be obtained from $\mathcal{H}'$ via the following moves:
\begin{description}
\item[1] a 3-manifold isotopy 
\item[2] $\alpha$ and $\beta$ curve isotopy
\item[3] $\alpha$ and $\beta$ handleslide
\item[4] index one/two stabilization
\end{description}
We may also need the inverses of moves $1$-$4$
\begin{description}
\item[5] free index zero/three stabilization, 
\end{description}
but we do not need the inverse of $5$.

We follow the argument from proposition 4.13 of \cite{MOSurg} which relies on \cite{MOComb} Lemma $2.4$ . Basically, we can apply moves $1$-$4$ to $\mathcal{H'}$  to obtain a Heegaard diagram that differs from a diagram with exactly $l$ pairs of basepoints (one pair for each component) by index zero/three stabalizations only. Then we can apply moves $1-4$ again to obtain the diagram $\mathcal{H}$. Now we know that moves $1-4$ and their inverses give chain homotopy equivalences for the complexes $CFL^-$ by the arguments given in \cite{OZOrig} and proposition $3.9$ of \cite{OZLinks}, so we will focus on move $5$. Suppose that $\mathcal{H}_1$ and $\mathcal{H}_2$ are two Heegaard diagrams for $L$, and $\mathcal{H}_2$ is obtained from $\mathcal{H}_1$ by a free index zero/three stabilization. Then $\mathcal{H}_2$ has an extra free basepoint $w_r$ that $\mathcal{H}_1$ does not have. By the argument of Lemma $6.1$ in \cite{OZLinks}, we see that the complex $CFL^-(\mathcal{H}_2$, \textbf{s}$)$ is just the mapping cone
\[\xymatrix{CFL^-(\mathcal{H}_1,\mbox{ \textbf{s}})[U_r] \ar[r]^{U_r - U_k} & CFL^-(\mathcal{H}_1,\mbox{ \textbf{s}})[U_r],}\]
where $k$ is an index corresponding to some \textbf{w} basepoint in $\mathcal{H}_1$. Now, $k$ may correspond to a free basepoint, or it may correspond to some link component (in which case the action of $U_k$ is trivial); but in either case, the homology of this mapping cone is the same as the homology of $CFL^-(\mathcal{H}_1$, \textbf{s}$)$. So we see that all of the above $5$ Heegaard moves induce quasi-isomorphisms of chain complexes, and this gives the desired result. 
\end{proof}

\begin{defn}
Fix a Heegaard diagram $\mathcal{H}$ for $L$. For a given \textbf{s} $\in \mathbb{H}(L)$ and $\varepsilon \in E_l$, we define the complex $A_{\mbox{\scriptsize{\textbf{s}}},\varepsilon}^-(\mathcal{H})$ to be the quotient complex of $CF^-(\mathcal{H})$ generated by those $[\mbox{\textbf{x}},i_1,j_1,\ldots, i_l,j_l, i_{l+1},$ $ \ldots, i_m]$ that satisfy
\begin{itemize}
\item $\mbox{max}\{i_k,j_k-(s_k-1)\} \leq 0$ if $\varepsilon_k = 0$
\item $\mbox{max}\{i_k,j_k-s_k\} \leq 0$ if $\varepsilon_k = 1$
\item $i_k \leq 0$ and $j_k = s_k$ if $\varepsilon_k = 2$.
\end{itemize}
By $A_{\scriptsize{\mbox{\textbf{s}}}}^-(\mathcal{H})$ we mean $A_{\scriptsize{\mbox{\textnormal{\textbf{s}}}},\scriptsize{\mbox{\textbf{1}}}}^-(\mathcal{H})$. We will write $A_{\scriptsize{\mbox{\textbf{s}}},\varepsilon}^-$ when the choice of diagram is clear from context. The complex $A_{\scriptsize{\mbox{\textnormal{\textbf{s}}}},\scriptsize{\mbox{\textbf{1}}}}^-(\mathcal{H})$ inherits an $\mathbb{F}[U_1, \ldots U_m]$ action from $CF^-(\mathcal{H})$. When $\mathcal{H}$ is clear from context we will omit $\mathcal{H}$ from the notation.
\end{defn}
\begin{rem}\label{rem:diff}
If we complete $A_{\scriptsize{\mbox{\textbf{s}}}}^-(\mathcal{H})$ with respect to the maximal ideal $(U_1,\ldots, U_m)$, there is an isomorphism between the completed version of $A_{\scriptsize{\mbox{\textbf{s}}}}^-(\mathcal{H})$ and  $\mathfrak{A}^-(\mathcal{H}, \mbox{\textbf{s}})$ as defined in section $4.2$ of \cite{MOSurg}, given by:
\[[\mbox{\textbf{x}},i_1,j_1,\ldots, i_l,j_l,i_{l+1},\ldots, i_m]\mapsto U_1^{-\mbox{\scriptsize{max}}\{i_1,j_1-s_1\}}U_2^{-\mbox{\scriptsize{max}}\{i_2,j_2-s_2\}}\ldots U_l^{-\mbox{\scriptsize{max}}\{i_l,j_l-s_l\}}U^{-i_{l+1}}\ldots U^{-i_m}\mbox{\textbf{x}}.\]
We can use the proofs in section $4.3$ and $4.4$ of \cite{MOSurg} to show that the homology of the complex $A_{\scriptsize{\mbox{\textbf{s}}}}^-(\mathcal{H})$ does not depend on the choice of a Heegaard diagram. For this reason we will sometimes write $H_*(A_{\scriptsize{\mbox{\textbf{s}}}}^-(\mathcal{H}))$ as $H_*(A_{\scriptsize{\mbox{\textbf{s}}}}^-(L))$. In this paper we could have just used the complexes $\mathfrak{A}^-_{\scriptsize{\mbox{\textbf{s}}}}$ to get the same results about link Floer homology. The choice to use the notation here has been made to make the analogy with the work in \cite{OZKnot} and \cite{OZLspace} more clear.
\end{rem}

\begin{thm} \label{thm:Lprop}Suppose that $L \subset S^3$ is an $L$-space link and \textbf{s} $\in \mathbb{H}(L)$. Then, as $\mathbb{F}[U_1,U_2,\ldots,U_l]$-modules,
\[H_*(A_{\scriptsize{\mbox{\textnormal{\textbf{s}}}}}^-) \cong \mathbb{F}[U].\]
where all of the $U_i$ have the same action as $U$ on the right hand side.
\end{thm}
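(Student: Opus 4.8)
The plan is to identify $H_*(A^-_{\mathbf{s}})$ with the Heegaard Floer homology of a large surgery on $L$ and then invoke the $L$-space hypothesis. By Remark~\ref{rem:diff}, the completion of $A^-_{\mathbf{s}}(\mathcal{H})$ at $(U_1,\dots,U_m)$ is the complex $\mathfrak{A}^-(\mathcal{H},\mathbf{s})$ of~\cite{MOSurg}, whose homology is an invariant of $L$ and $\mathbf{s}$. The large surgery theorem of Manolescu--Ozsv\'{a}th then gives, for a diagonal framing $\Lambda=\mathrm{diag}(n_1,\dots,n_l)$ with all $n_i\gg 0$, an isomorphism of $\mathbb{F}[U_1,\dots,U_l]$-modules
\[
H_*\bigl(\mathfrak{A}^-(\mathcal{H},\mathbf{s})\bigr)\;\cong\;HF^-\!\bigl(S^3_\Lambda(L),\mathfrak{s}_{\mathbf{s}}\bigr),
\]
where $\mathfrak{s}_{\mathbf{s}}$ is the $\mathrm{Spin}^c$ structure determined by $\mathbf{s}$. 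For a fixed $\mathbf{s}$ we are free to enlarge $\Lambda$ until $\mathbf{s}$ lies in the fundamental domain indexing the $\mathrm{Spin}^c$ structures, and since the left-hand side is independent of $\Lambda$ this covers every $\mathbf{s}\in\mathbb{H}(L)$.

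Next I would evaluate the right-hand side using that $L$ is an $L$-space link, so that $S^3_\Lambda(L)$ is an $L$-space for $n_i\gg 0$. For any rational homology sphere $Y$ one has $HF^-(Y,\mathfrak{s})\cong\mathbb{F}[U]\oplus T$ with $T$ a finite $U$-torsion module, and the exact sequence $\cdots\to HF^-(Y,\mathfrak{s})\xrightarrow{U}HF^-(Y,\mathfrak{s})\to\widehat{HF}(Y,\mathfrak{s})\to\cdots$ shows $\dim_{\mathbb{F}}\widehat{HF}(Y,\mathfrak{s})=1+\dim_{\mathbb{F}}(T/UT)+\dim_{\mathbb{F}}\ker(U|_T)$. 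An $L$-space has $\dim_{\mathbb{F}}\widehat{HF}(Y,\mathfrak{s})=1$ in every $\mathrm{Spin}^c$ structure, forcing $T=0$ and hence $HF^-(Y,\mathfrak{s})\cong\mathbb{F}[U]$. Thus the completed homology $H_*(\mathfrak{A}^-(\mathcal{H},\mathbf{s}))$ is free of rank one over $\mathbb{F}[U]$, and in particular over each $\mathbb{F}[U_i]$.

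I would then descend to the uncompleted complex and pin down the action of the individual $U_i$. Since $A^-_{\mathbf{s}}$ is generated over $\mathbb{F}[U_1,\dots,U_l]$ by the finitely many intersection points, $H_*(A^-_{\mathbf{s}})$ is a finitely generated Maslov-graded module, bounded above in grading with finite-dimensional graded pieces and with each $U_i$ of degree $-2$; for such modules completion at $(U_1,\dots,U_m)$ is harmless, so $H_*(A^-_{\mathbf{s}})\cong\mathbb{F}[U]$ is free of rank one over each $\mathbb{F}[U_i]$. Let $x$ be a generator in top Maslov grading $d$. For any $j$ the element $U_j x$ lies in the one-dimensional degree-$(d-2)$ piece $\mathbb{F}\cdot U_1 x$, so $U_j x\in\{0,U_1 x\}$; it cannot be $0$, for then $U_j$ would annihilate all of $H_*(A^-_{\mathbf{s}})=\mathbb{F}[U_1]x$, contradicting freeness over $\mathbb{F}[U_j]$. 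Hence $U_j x=U_1 x$ for every $j$, and since the $U_i$ commute this propagates to show all $U_i$ act identically, giving $H_*(A^-_{\mathbf{s}})\cong\mathbb{F}[U]$ with every $U_i$ acting as $U$.

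The main obstacle is the first step: correctly invoking the Manolescu--Ozsv\'{a}th large surgery theorem in the maximally colored, multi-component setting, together with the completion bookkeeping of Remark~\ref{rem:diff}, so that the identification with $HF^-$ of the surgery is an honest isomorphism of $\mathbb{F}[U_i]$-modules. The remaining inputs --- that an $L$-space has one-dimensional $\widehat{HF}$ in each $\mathrm{Spin}^c$ structure, and the grading argument equating the $U_i$ --- are elementary once that identification is in hand.
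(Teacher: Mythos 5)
Your proposal is correct and follows essentially the same route as the paper: both identify $H_*(A^-_{\textbf{s}})$ with $HF^-(Y,\mathfrak{s})$ for $Y$ a large surgery on $L$ via Theorem $10.1$ of Manolescu--Ozsv\'{a}th and then use the $L$-space condition to conclude $HF^-(Y,\mathfrak{s})\cong\mathbb{F}[U]$. The only cosmetic difference is in the completion bookkeeping --- the paper simply observes (in a footnote) that the proof of the large surgery theorem never uses the completed coefficients, so it applies verbatim to the uncompleted complex, whereas you complete, apply the theorem, and then descend; your extra details (the $T=0$ argument and the grading argument equating the $U_i$ actions) are correct elaborations of what the paper leaves implicit.
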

\begin{proof}
We can use the proof of Theorem\footnote{ As was mentioned in Remark \ref{rem:diff}, the only difference between the complex in that paper and this one is that it is defined over $\mathbb{F}[[U_1,U_2,\ldots,U_m]]$ as opposed to $\mathbb{F}[U_1,U_2,\ldots,U_m]$. However the proof of Theorem $10.1$ in \cite{MOSurg} does not rely on $\mathbb{F}[[U_1,U_2,\ldots,U_m]]$ in any way. See also the proof of Theorem $4.1$ in \cite{OZKnot}.} $10.1$ in \cite{MOSurg} to see that for any \textbf{s} $\in \mathbb{H}(L)$, $H_*(A_{\scriptsize{\mbox{\textnormal{\textbf{s}}}}}^-)$ is isomorphic (as a module) to $HF^-(Y,\mathfrak{s})$, where $Y$ is some $L$ space obtained by large positive surgery on $L$ and $\mathfrak{s}$ is a Spin$^c$ structure over $Y$. 
\end{proof}

\begin{rem}
The above property characterizes $L$-space links. See also proposition $1.11$ of \cite{YLspace}.
\end{rem}

Suppose that, for a fixed \textbf{s} $\in \mathbb{H}(L)$, we have $\varepsilon', \varepsilon$ and $\varepsilon''$ in $E_l$ so that they only differ in the $j$th coordinate with $\varepsilon'_j = 0, \varepsilon_j = 1$ and $\varepsilon''_j = 2$. Then, for a given Heegaard diagram $\mathcal{H}$ of $L$, there is a short exact sequence:
\[\xymatrix{
0 \ar[r] &A_{\scriptsize{\mbox{\textnormal{\textbf{s}}}},\varepsilon'}^-(\mathcal{H}) \ar[r]^{i_{\varepsilon'\varepsilon}} &A_{\scriptsize{\mbox{\textnormal{\textbf{s}}}},\varepsilon}^-(\mathcal{H}) \ar[r]^{j_{\varepsilon\varepsilon''}}& A_{\scriptsize{\mbox{\textnormal{\textbf{s}}}},\varepsilon''}^-(\mathcal{H}) \ar[r]& 0.\\
}\]
So, we can define a short exact cube of chain complexes $\boldsymbol{A}^-(\mathcal{H},\mbox{\textbf{s}})$ by setting $\boldsymbol{A}^-(\mathcal{H},\mbox{\textbf{s}})_\varepsilon = A_{\mbox{\scriptsize{\textnormal{\textbf{s}}}},\varepsilon}^-(\mathcal{H})$. Note also that $\overline{\boldsymbol{A}^-(\mathcal{H},\mbox{\textbf{s}})}$ is just $CFL^-(\mathcal{H},\mbox{\textbf{s}})$.

\begin{thm}\label{thm:Aminus} For any \textnormal{\textbf{s}} $\in \mathbb{H}(L)$, $\boldsymbol{A}^-(\mathcal{H},\mbox{\textnormal{\textbf{s}}})$ is a basic short exact cube when $L$ is an $L$-space link.
\end{thm}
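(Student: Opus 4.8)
The plan is to verify the two nontrivial conditions in the definition of a basic short exact cube, namely properties $4$ and $5$, since properties $1$--$3$ are precisely the short exact cube structure $\boldsymbol{A}^-(\mathcal{H},\mathbf{s})$ assembled immediately before the statement. I would first dispose of the module part of property $4$. For $\varepsilon \in \{0,1\}^l$, the defining inequalities show that $A^-_{\mathbf{s},\varepsilon}$ is literally the complex $A^-_{\mathbf{s}'}$, where $s'_k = s_k - 1$ if $\varepsilon_k = 0$ and $s'_k = s_k$ if $\varepsilon_k = 1$: the condition imposed when $\varepsilon_k = 0$ is exactly the condition imposed when $\varepsilon_k = 1$ with $s_k$ replaced by $s_k - 1$. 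Since $\mathbb{H}(L)$ is an affine lattice over $\mathbb{Z}^l$ we have $\mathbf{s}' \in \mathbb{H}(L)$, so Theorem \ref{thm:Lprop} gives $H_*(A^-_{\mathbf{s},\varepsilon}) \cong \mathbb{F}[U]$ as an $\mathbb{F}[U_1,\ldots,U_l]$-module with every $U_i$ acting as $U$. In particular each $U_i$ acts injectively on these homology groups, which is exactly the input needed below.

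The heart of the matter is property $5$, the statement about the edge maps $(i_{\varepsilon'\varepsilon})_*$ of the cube of inclusions. Fix such an edge: $\varepsilon',\varepsilon \in \{0,1\}^l$ agreeing outside the $j$th coordinate with $\varepsilon'_j = 0,\ \varepsilon_j = 1$. By the previous paragraph the map $i_{\varepsilon'\varepsilon}$ is the inclusion $\iota\colon A^-_{\mathbf{t}} \hookrightarrow A^-_{\mathbf{t}+e_j}$ for the appropriate $\mathbf{t}$. The idea is to pair $\iota$ with a map running the opposite way: multiplication by $U_j$ lowers $i_j$ and $j_j$ by one, and one checks directly from the defining inequalities that it carries $A^-_{\mathbf{t}+e_j}$ into $A^-_{\mathbf{t}}$, giving a chain map $U_j\colon A^-_{\mathbf{t}+e_j}\to A^-_{\mathbf{t}}$. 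Both composites $U_j\circ\iota$ and $\iota\circ U_j$ are simply multiplication by $U_j$ on $A^-_{\mathbf{t}}$ and on $A^-_{\mathbf{t}+e_j}$ respectively, which by Theorem \ref{thm:Lprop} acts as $U$ and is hence injective on homology. Therefore $\iota_*$ and $(U_j)_*$ are each injective; being injective $U$-equivariant maps between copies of $\mathbb{F}[U]$ they are given by multiplication by $U^a$ and $U^b$ with $a,b\geq 0$, and since their composite is multiplication by $U^{a+b} = U$ we conclude $a+b = 1$. Thus $\iota_*$ is either an isomorphism ($a = 0$) or multiplication by $U$ ($a=1$); in the latter case the top grading of $H_*(A^-_\varepsilon)$ is exactly $2$ higher than that of $H_*(A^-_{\varepsilon'})$ and the image is $U\,H_*(A^-_\varepsilon)$, which is precisely property $5$.

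It remains to promote property $4$ to the assertion that the top grading is even, and here I would telescope along edges. Once every $s_k$ exceeds $\max_{\mathbf{x}} A_k(\mathbf{x})$ the inequality $j_k \leq s_k$ is vacuous, so $A^-_{\mathbf{s}}$ collapses to $CF^-(\mathcal{H})$, whose homology is $HF^-(S^3)\cong \mathbb{F}[U]$ with top grading $0$. Any $\mathbf{s}'\in\mathbb{H}(L)$ can be joined to such a large $\mathbf{s}$ by increasing one coordinate at a time, and by the property $5$ computation each step changes the top grading by $0$ or $2$. Starting from the even value $0$ and changing only by even increments forces every top grading to be even.

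The main obstacle is the property $5$ step. The naive approach is to compute the homology of the quotient complex $A^-_{\mathbf{s},\varepsilon''}$ (the slice where $j_j = s_j$), but this slice homology really is $\mathbb{F}$ or $0$ depending on $\mathbf{s}$ and is cumbersome to determine head on; the pairing trick avoids it entirely, needing only the injectivity of the $U_j$-action supplied by Theorem \ref{thm:Lprop}. The one point demanding care is the bookkeeping that $U_j$ genuinely maps $A^-_{\mathbf{t}+e_j}$ into $A^-_{\mathbf{t}}$ and that the two composites are the honest $U_j$-multiplications, but both are immediate from the inequalities defining $A^-_{\mathbf{s},\varepsilon}$.
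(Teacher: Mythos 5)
Your proof is correct, but the central step---property $5$---is handled by a genuinely different argument than the paper's. The paper follows Lemma $3.1$ of \cite{OZLspace}: it introduces the slice complexes $C\{X\} = A^-_{\mathbf{s},\varepsilon''}$ and $C\{Y\}$, identifies $H_*(C\{X\cup Y\})$ and $H_*(C\{U_jX\cup Y\})$ with $\widehat{HF}$ of large surgeries on $L$ (hence $\mathbb{F}$, using the $L$-space condition in its hat flavor via Section $11.2$ of \cite{MOSurg}), and runs a connecting-homomorphism argument ($\delta_2\circ\delta_1 = 0$) to obtain the dichotomy that exactly one of $H_*(C\{X\})$, $H_*(C\{Y\})$ is $\mathbb{F}$ and the other vanishes, from which the behavior of $(i_{\varepsilon'\varepsilon})_*$ and the grading shift are deduced by chasing the two short exact sequences. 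You instead pair the inclusion with the chain-level map $U_j$ going the other way, observe that both composites are literal $U_j$-multiplications, and use that these act injectively on homology by Theorem \ref{thm:Lprop}; elementary $\mathbb{F}[U]$-module algebra then forces the inclusion to induce either an isomorphism or multiplication by $U$. Your route is more self-contained: it needs only the minus-flavor statement of Theorem \ref{thm:Lprop} and avoids both the hat-flavor input and the exact-sequence bookkeeping. What the paper's route buys in exchange is an explicit identification of the slice homology $H_*(A^-_{\mathbf{s},\varepsilon''})$ as $0$ or $\mathbb{F}$ together with its degree, data in the same spirit as the edge labels used throughout Section $4$---though that information is also recoverable from your dichotomy via the short exact sequence. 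One small point to tighten: an injective $U$-equivariant map $\mathbb{F}[U]\to\mathbb{F}[U]$ is a priori multiplication by an arbitrary nonzero polynomial $p(U)$, not automatically a monomial; either invoke that your two maps are homogeneous (of degrees $0$ and $-2$ respectively), or note that $p(U)q(U) = U$ in $\mathbb{F}[U]$ already forces $\{p,q\} = \{1,U\}$. Your treatment of the evenness of the top grading is the same as the paper's.
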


\begin{proof}
We want to show properties $4$ and $5$ in definition $2.1$. Note that, by Theorem \ref{thm:Lprop}, we already know that for all $\varepsilon \in \{0,1\}^n$ we have $H_*(A_{\scriptsize{\mbox{\textnormal{\textbf{s}}}},\varepsilon}^-) \cong \mathbb{F}[U]$. First, we will examine all maps induced on homology in the cube of inclusions. Suppose that $\varepsilon'$ and $\varepsilon$ are in $\{0,1\}^l$ and differ only in the $j$th coordinate with $\varepsilon'_j = 0$ and $\varepsilon_j = 1$. Also define $\varepsilon''$ to agree in all coordinates with $\varepsilon$ except the $j$th and $\varepsilon''_j = 2$. Now, following the proof of Lemma $3.1$ in \cite{OZLspace}, we define $X$ to be the set of generators $[\mbox{\textbf{x}},i_1,j_1,\ldots, i_l,j_l,i_{l+1},\ldots,l_m]$ of $CF^-$ that satisfy:
\begin{description}
\item[1] $\mbox{max}\{i_k,j_k-(s_k-1)\} \leq 0$ if $\varepsilon''_k = 0$
\item[2] $\mbox{max}\{i_k,j_k-s_k\} \leq 0$ if $\varepsilon''_k = 1$
\item[3] $i_k \leq 0$ and $j_k = s_k$ if $\varepsilon''_k = 2$, i.e. when $k = j$.
\end{description}
We define a set $Y$ similarly, except \textbf{3} is replaced with;
\begin{description}
\item[3] $i_k = 0$ and $j_k < s_k$ if $\varepsilon''_k = 2$, i.e. when $k = j$.
\end{description}
Note that $X$ naturally generates a sub-complex of a quotient complex of $CF^-$, which we will denote by $C\{X\} = A_{\scriptsize{\mbox{\textbf{s}}},\varepsilon''}^-$. Similarly, there are complexes $C\{U_jX\}, C\{Y\}$, $C\{X \cup Y\}$, $C\{U_jX\cup Y\}$ and $C\{X \cup U_jX \cup Y\}$, all of which inherit differentials from $CF^-$. Since $C\{X \cup Y\} = A_{\scriptsize{\mbox{\textbf{s}}},\varepsilon}^-/U_j(A_{\scriptsize{\mbox{\textbf{s}}},\varepsilon}^-)$ its homology is $\widehat{HF}$ of some $L$-space obtained by some large surgery on $L$ (see section $11.2$ of \cite{MOSurg}). Therefore $H_*(C\{X \cup Y\}) \cong \mathbb{F}$. Similarly $H_*(C\{U_jX \cup Y\}) \cong \mathbb{F}$. Now we have two short exact sequences of complexes:
\[\xymatrix{0 \ar[r] &C\{Y\} \ar[r]^-{i_1} & C\{X \cup Y\} \ar[r]^-{j_1} & C\{X\}\ar[r] & 0 }\]
and
\[\xymatrix{0 \ar[r] &C\{U_jX\} \ar[r]^-{i_2} & C\{U_j X \cup Y\} \ar[r]^-{j_2} & C\{Y\}\ar[r] & 0. }\]
We will denote the connecting homomorphims for these two complexes by $\delta_1$ and $\delta_2$, respectively. First note that $\delta_2\circ\delta_1 = 0$ (this follows from the fact the differential $\partial$ on the quotient complex $C\{X \cup U_jX \cup Y\}$ satisfies $\partial^2 = 0$). Now it follows from the exact same argument as in Lemma $3.1$ in  \cite{OZLspace} that either $H_*(C\{X\}) = H_*(A_{\scriptsize{\mbox{\textbf{s}}},\varepsilon''}^-)$ is $0$ and $H_*(C\{Y\})$ is $\mathbb{F}$, or $H_*(C\{X\})$ is $\mathbb{F}$ and $H_*(C\{Y\})$ is $0$ . If $H_*(C\{X\}) = 0$ then the map $i_{\varepsilon'\varepsilon}: A_{\scriptsize{\mbox{\textnormal{\textbf{s}}}},\varepsilon'}^-  \to A_{\scriptsize{\mbox{\textnormal{\textbf{s}}}},\varepsilon}^-$ clearly induces an isomorphism on homology. If $H_*(A_{\scriptsize{\mbox{\textbf{s}}},\varepsilon''}^-)$ is $\mathbb{F}$ supported in some degree $k$ then it follows from the first short exact sequence that $H_*(C\{X \cup Y\}) = H_*(A_{\scriptsize{\mbox{\textbf{s}}},\varepsilon}^-/U_j(A_{\scriptsize{\mbox{\textbf{s}}},\varepsilon}^-))$ is also $\mathbb{F}$ supported in degree $k$. Then, from the second short exact sequence it follows that $H_*(C\{U_jX\}) \cong H_*(C\{U_jX \cup Y\}) = H_*(A_{\scriptsize{\mbox{\textbf{s}}},\varepsilon'}^-/U_j(A_{\scriptsize{\mbox{\textbf{s}}},\varepsilon'}^-))$ is $\mathbb{F}$ supported in degree $k-2$. So we now have that the top grading in $H_*(A_{\scriptsize{\mbox{\textbf{s}}},\varepsilon'}^-)$ is two less than the top grading in $H_*(A_{\scriptsize{\mbox{\textbf{s}}},\varepsilon}^-)$, and we have now completely verified property $5$ in the definition of a basic short exact cube.\\

The only thing that is left to check in property $4$ is that for any $\varepsilon \in \{0,1\}^l$,  $H_*(\boldsymbol{A}^-(\mathcal{H},\mbox{\textbf{s}})_\varepsilon) \cong \mathbb{F}[U]$ has even top degree. For any sufficiently large $(s_1, s_2, \ldots, s_l) =$ \textbf{s} $\in \mathbb{H}(L)$ we have $H_*(A_{\scriptsize{\mbox{\textbf{s}}}}^-) \cong HF^-(S^3) = \mathbb{F}_{(0)}[U]$. For any \textbf{s}$'$ $\leq$ \textbf{s}, we can decrease the $s_j$ by one over finitely many steps to get from \textbf{s} to \textbf{s}$'$. By property $5$ we know that each of these steps will either preserve the top degree or drop it by $2$. The result now follows. 
\end{proof}
\begin{cor}\label{cor:inv}
For an $L$-space link $L \subset S^3$ with Heegaard diagram $\mathcal{H}$, $HC(\boldsymbol{A}^-(\mathcal{H},$ \textbf{s}$))$ depends only on $L$ and \textbf{s}.
\end{cor}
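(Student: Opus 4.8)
The plan is to observe that only one of the three pieces of data defining $HC(\boldsymbol{A}^-(\mathcal{H},\textbf{s}))$ can possibly depend on $\mathcal{H}$. By Definition \ref{def:hc} the vertex set $\{0,1\}^l$ and the set of directed edges are combinatorial, depending only on $l$, so I would reduce the statement to showing that each label $l(e_{\varepsilon'\varepsilon})$ is an invariant of $L$ and $\textbf{s}$. Since $\boldsymbol{A}^-(\mathcal{H},\textbf{s})$ is basic by Theorem \ref{thm:Aminus}, this label is $0$ exactly when $(i_{\varepsilon'\varepsilon})_*$ is an isomorphism, and property $5$ of a basic short exact cube tells us this occurs precisely when the top Maslov gradings of $H_*(A^-_{\textbf{s},\varepsilon'})$ and $H_*(A^-_{\textbf{s},\varepsilon})$ agree (the only alternative being that they differ by $2$). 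So it suffices to show those top gradings depend only on $L$ and $\textbf{s}$.

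The key step is to identify the vertex complexes with complexes already controlled by Theorem \ref{thm:Lprop}. For $\varepsilon \in \{0,1\}^l$ I would set $(\textbf{s}^\varepsilon)_k = s_k$ if $\varepsilon_k = 1$ and $(\textbf{s}^\varepsilon)_k = s_k - 1$ if $\varepsilon_k = 0$; this $\textbf{s}^\varepsilon$ lies in $\mathbb{H}(L)$ because $\mathbb{H}(L)$ is an affine lattice over $\mathbb{Z}^l$. Comparing the defining inequalities shows directly that the generators of $A^-_{\textbf{s},\varepsilon}(\mathcal{H})$ are exactly those of $A^-_{\textbf{s}^\varepsilon}(\mathcal{H}) = A^-_{\textbf{s}^\varepsilon,\boldsymbol{1}}(\mathcal{H})$: the conditions $\max\{i_k, j_k - (s_k-1)\}\leq 0$ (for $\varepsilon_k = 0$) and $\max\{i_k, j_k - s_k\}\leq 0$ (for $\varepsilon_k = 1$) together read exactly as $\max\{i_k, j_k - (\textbf{s}^\varepsilon)_k\}\leq 0$ for all $k$. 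As both differentials are inherited from $CF^-(\mathcal{H})$, this is an equality of chain complexes $A^-_{\textbf{s},\varepsilon}(\mathcal{H}) = A^-_{\textbf{s}^\varepsilon}(\mathcal{H})$.

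Given this identification, I would finish as follows. By Remark \ref{rem:diff} the homology $H_*(A^-_{\textbf{s}^\varepsilon}(\mathcal{H}))$ does not depend on $\mathcal{H}$, and by Theorem \ref{thm:Lprop} it is $\mathbb{F}[U]$ as a graded module; hence its top Maslov grading $d_\varepsilon$ is an invariant of $L$ and $\textbf{s}$. Since each edge label is, by the first paragraph, a function of the pair $(d_{\varepsilon'}, d_\varepsilon)$ of such invariants, every label is an invariant and therefore $HC(\boldsymbol{A}^-(\mathcal{H},\textbf{s}))$ depends only on $L$ and $\textbf{s}$.

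The only substantive input is the diagram-independence of the vertex homologies together with their absolute gradings, which is precisely Theorem \ref{thm:Lprop} and Remark \ref{rem:diff}; property $5$ does the remaining work of converting the a priori map-dependent question ``is $(i_{\varepsilon'\varepsilon})_*$ an isomorphism?'' into a comparison of those invariant top gradings. The one point I would be careful about is that I am using invariance of the \emph{absolute} Maslov grading, not merely of the isomorphism type of $H_*(A^-_{\textbf{s}^\varepsilon})$ up to an overall shift, so that the gradings $d_\varepsilon$ are genuinely comparable across different Heegaard diagrams.
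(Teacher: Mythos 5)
Your proof is correct and takes essentially the same route as the paper's: both reduce the claim to the diagram-independence of the top gradings of the vertex homologies $H_*(A^-_{\mbox{\scriptsize{\textbf{s}}},\varepsilon})$ (via Theorem \ref{thm:Lprop} and Remark \ref{rem:diff}) and then invoke property $5$ of a basic short exact cube, established in Theorem \ref{thm:Aminus}, to recover every edge label from those gradings. Your explicit identification $A^-_{\mbox{\scriptsize{\textbf{s}}},\varepsilon} = A^-_{\mbox{\scriptsize{\textbf{s}}}^\varepsilon}$ merely spells out a step the paper's two-sentence proof leaves implicit.
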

\begin{proof}
The top gradings of all the $H_*(A_{\scriptsize{\mbox{\textbf{s}}},\varepsilon}^-)$ are invariants of $L \subset S^3$ and \textbf{s}. The maps induced by homology in $\boldsymbol{A}^-(L,$ \textbf{s}$)^I$ are completely determined by these gradings since we have shown that $\boldsymbol{A}^-(\mathcal{H},$ \textbf{s}$)$ is a basic short exact cube. 
\end{proof}

Here is another fact that we will use often:
\begin{lem}\label{lem:hat}
Fix some $\mbox{\textnormal{\textbf{s}}} \in \mathbb{H}(L)$ where $L$ in $S^3$ is an arbitrary link (i.e. not necessarily an $L$-space link). Then, if $HFL^-(L,\mbox{\textnormal\textbf{s}}+\varepsilon)$ is trivial for every $\varepsilon \in \{0,1\}^l, \varepsilon \neq \boldsymbol{0}$ we get $HFL^-(L,\mbox{\textnormal{\textbf{s}}}) \cong \widehat{HFL}(L,\mbox{\textnormal{\textbf{s}}})$.
\end{lem}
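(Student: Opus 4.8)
The plan is to argue at the chain level, with a fixed admissible Heegaard diagram $\mathcal{H}$ for $L$, by exhibiting $\widehat{CFL}(\mathcal{H},\mathbf{s})$ as the total quotient of $CFL^-(\mathcal{H},\mathbf{s})$ by a family of subcomplexes that are themselves the complexes $CFL^-(\mathcal{H},\mathbf{s}+\varepsilon)$. Since both $HFL^-(L,\mathbf{s})$ and $\widehat{HFL}(L,\mathbf{s})$ are independent of the diagram, I would first choose $\mathcal{H}$ with no free basepoints; the free basepoints carry no Alexander grading and so do not interact with the argument. Unwinding $CFL^-(\mathcal{H},\mathbf{s}) = \overline{\boldsymbol{A}^-(\mathcal{H},\mathbf{s})} = A_{\mathbf{s},\boldsymbol{2}}^-$, this complex has one generator for each $\mathbf{x}\in\mathbb{T}_\alpha\cap\mathbb{T}_\beta$ with $A_k(\mathbf{x})\geq s_k$ for all $k$, and its induced differential counts disks $\phi$ with $n_{z_k}(\phi)=0$ for $1\leq k\leq l$. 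For such a disk $\phi\in\pi_2(\mathbf{x},\mathbf{y})$ the grading identity reads $A_k(\mathbf{y})-A_k(\mathbf{x}) = n_{w_k}(\phi)-n_{z_k}(\phi) = n_{w_k}(\phi)\geq 0$, so the blocked differential can only raise each Alexander coordinate.

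The first key step is to use this monotonicity to identify the relevant subcomplexes. Because no $A_k$ can decrease, for each $\varepsilon\in\{0,1\}^l$ the generators with $A(\mathbf{x})\geq\mathbf{s}+\varepsilon$ span a \emph{subcomplex} of $CFL^-(\mathcal{H},\mathbf{s})$, and this subcomplex is canonically $CFL^-(\mathcal{H},\mathbf{s}+\varepsilon)$. Writing $C_k = CFL^-(\mathcal{H},\mathbf{s}+e_k)$, these subcomplexes are cut out by coordinatewise inequalities on the generating set, so sums and intersections distribute over the bases: $\bigcap_{k\in S}C_k = CFL^-(\mathcal{H},\mathbf{s}+\varepsilon_S)$ with $\varepsilon_S=\sum_{k\in S}e_k$. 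The total quotient $CFL^-(\mathcal{H},\mathbf{s})/(C_1+\cdots+C_l)$ is then spanned by the generators with $A(\mathbf{x})=\mathbf{s}$ exactly; its induced differential counts $n_{z_k}=0$ disks that remain in Alexander grading $\mathbf{s}$, and the grading identity above forces $n_{w_k}(\phi)=0$ as well. Hence this quotient is precisely $\widehat{CFL}(\mathcal{H},\mathbf{s})$.

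The second step is standard homological algebra applied to this configuration. For a complex with subcomplexes $C_1,\dots,C_l$ whose multiple intersections behave as above, the quotient $CFL^-(\mathcal{H},\mathbf{s})/(C_1+\cdots+C_l)$ is quasi-isomorphic to the total complex of the $l$-dimensional cube whose vertex at $\varepsilon\in\{0,1\}^l$ is $CFL^-(\mathcal{H},\mathbf{s}+\varepsilon)$, placed in cube-degree $|\varepsilon|$, with edges the natural inclusions. This is proved by induction on $l$ from the mapping-cone identity that the cone of an inclusion $C_1\hookrightarrow C$ is quasi-isomorphic to $C/C_1$. Filtering the total complex by cube-degree gives a spectral sequence with $E_1$-page $\bigoplus_{\varepsilon\in\{0,1\}^l} HFL^-(L,\mathbf{s}+\varepsilon)$ converging to $\widehat{HFL}(L,\mathbf{s})$. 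Under the hypothesis, every summand with $\varepsilon\neq\boldsymbol{0}$ vanishes, leaving only $HFL^-(L,\mathbf{s})$; equivalently, all cube vertices except the one at $\boldsymbol{0}$ are acyclic, so the total complex is quasi-isomorphic to that single vertex. Either way there is no room for higher differentials, and $\widehat{HFL}(L,\mathbf{s})\cong HFL^-(L,\mathbf{s})$.

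The main obstacle is the bookkeeping in the first two steps rather than the concluding collapse. One must pin down the \emph{direction} of the Alexander shift along the blocked differential, so that the $C_k$ are genuinely subcomplexes rather than quotients; verify that the coordinate-defined subcomplexes have distributive sums and intersections; and, most delicately, check that the fully blocked total quotient agrees on the nose with the standard complex computing $\widehat{HFL}(L,\mathbf{s})$, together with the reduction that free basepoints may be discarded without altering either homology group. Once these identifications are secured, the cube formalism together with the vanishing hypothesis finishes the proof at once.
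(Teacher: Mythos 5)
Your proof is correct, and it is genuinely different from the paper's: it is the minus-flavored dual of the hat-flavored cube argument the paper gives. The paper's cube has vertices $\boldsymbol{C}_{\mathbf{s},\varepsilon}$ spanned by generators with $i_k=0$ and $j_k<s_k$, $j_k\le s_k$, or $j_k=s_k$ according to $\varepsilon_k$; these are blocked at the $\mathbf{w}$-basepoints and filtered by sublevels of the $\mathbf{z}$-coordinate, so they are not $CFL^-$ complexes on the nose, and the crucial step there is identifying their homologies with $HFL^-$ groups, which is really an instance of the symmetry exchanging the roles of the $\mathbf{w}$- and $\mathbf{z}$-basepoints (conjugation $\mathbf{s}\mapsto-\mathbf{s}$). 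That step is delicate: as literally stated in the paper, the identification $H_*(\boldsymbol{C}_{\mathbf{s},\varepsilon})\cong U_1^{1-\varepsilon_1}\cdots U_l^{1-\varepsilon_l}HFL^-(L,\mathbf{s}+\boldsymbol{1}-\varepsilon)$ has the Alexander parameter pointing the wrong way --- for the unknot with $l=1$ and $\varepsilon=1$ the vertex complex has homology $\mathbb{F}$ exactly when $s\ge 0$, whereas $HFK^-(O,s)=\mathbb{F}$ exactly when $s\le 0$ --- so repairing it requires invoking the conjugation symmetry together with its attendant grading shift. Your cube instead sits inside $CFL^-(\mathcal{H},\mathbf{s})$ itself: the monotonicity of the $z$-blocked differential makes the Alexander superlevel spans honest subcomplexes, each literally isomorphic to $CFL^-(\mathcal{H},\mathbf{s}+\varepsilon)$ via multiplication by $\prod_k U_k^{\varepsilon_k}$, so the vanishing hypothesis applies verbatim, no symmetry is needed, and the resulting isomorphism $HFL^-(L,\mathbf{s})\cong\widehat{HFL}(L,\mathbf{s})$ is induced by the Maslov-grading-preserving quotient map. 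Both arguments conclude identically (all vertices but one are acyclic, so the iterated quotient is a quasi-isomorphism); what your version buys is that every identification is definitional, which is a genuine advantage, while the paper's version has the feature of plugging directly into the short-exact-cube formalism of its Section 2.
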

\begin{proof}
First fix a Heegaard diagram $\mathcal{H}$ for $L \subset S^3$. We define an $l$-dimensional short exact cube $\boldsymbol{C}_{\mbox{\scriptsize{\textbf{s}}}}$ as follows: for $\varepsilon \in \{0,1\}^l$ and \textbf{s} $\in \mathbb{H}(L)$ we define $\boldsymbol{C}_{\mbox{\scriptsize{\textbf{s}}},\varepsilon}$ to be a quotient complex of $CF^-(\mathcal{H})$ generated by those $[\mbox{\textbf{x}},i_1,j_1,\ldots,i_l,j_l,$ $i_{l+1},\ldots,i_m]$ that satisfy the following:\\
\begin{itemize}
\item $i_k = 0$ and $j_k < s_k $ if $\varepsilon_k = 0$
\item $i_k = 0$ and $j_k \leq s_k$ if $\varepsilon_k = 1$
\item $i_k = 0$ and $j_k = s_k$ if $\varepsilon_k = 2$.
\end{itemize} 
Then the inclusion and quotient maps of $\boldsymbol{C}_{\mbox{\scriptsize{\textbf{s}}}}$ are defined naturally from $CF^-(\mathcal{H})$.

By definition, $H_*(\overline{\boldsymbol{C}}_{\mbox{\scriptsize{\textbf{s}}}}) \cong \widehat{HFL}(L,\mbox{\textnormal\textbf{s}})$ and for $\varepsilon \in \{0,1\}^l$ we have 
\[H_*(\boldsymbol{C}_{\mbox{\scriptsize{\textbf{s}}},\varepsilon}) \cong U_1^{1-\varepsilon_1}U_2^{1-\varepsilon_2}\ldots U_l^{1-\varepsilon_l}HFL^-(L,\mbox{\textbf{s}}+\boldsymbol{1}-\varepsilon).\]
And so $H_*(\boldsymbol{C}_\varepsilon)$ is only nonzero when $\varepsilon = \boldsymbol{1}$. So it follows by taking iterated quotients that,
\[HFL^-(L,\mbox{\textbf{s}}) \cong H_*(\boldsymbol{C}_{\boldsymbol{1}}) \cong H_*(\overline{\boldsymbol{C}}) \cong \widehat{HFL}(L,\mbox{\textnormal\textbf{s}}).\]
\end{proof}

\section{Proof of the main Theorems}\label{sec:Proof}
\begin{rem}\label{rem:shift} Suppose that $M = L_{i_1}\sqcup L_{i_2}\sqcup \ldots L_{i_k}$ is a sub-link of $L = L_1\sqcup L_2 \ldots \sqcup L_l$ with the inherited orientation. Fix some Heegaard diagram $\mathcal{H}$ for $L$. Now choose any \textnormal{\textbf{s}} $= (s_1, \ldots s_l) \in \mathbb{H}(L)$ so that all $s_j$ for $L_j \not\in M$ are sufficiently large (for instance larger than max$\{A_j($\textnormal{\textbf{x}}$)\}$ for every generator \textnormal{\textbf{x}} in some fixed diagram $\mathcal{H}$ for $L \subset S^3$). Then it is easy to see that for some \textbf{r}$\in \mathbb{H}(M)$ and any $\varepsilon \in E_l$ the complex $A_{\scriptsize{\mbox{\textnormal{\textbf{s}}}},\varepsilon}^-(\mathcal{H})$ is the same as $A_{\scriptsize{\mbox{\textnormal{\textbf{r}}}},\varepsilon'}^-(\mathcal{H}')$ where $\varepsilon'\in E_{l-k}$ is obtained from $\varepsilon$ by deleting $\varepsilon_{i_1}, \ldots, \varepsilon_{i_k}$ and reordering and $\mathcal{H}'$ is obtained by deleting $z_{i_1},\ldots, z_{i_k}$ and reordering. The explicit value for \textbf{r} can be computed by the formula in section $4.5$ of \cite{MOSurg} (see also section $3.7$ of \cite{OZLinks}). So \textnormal{\textbf{r}} $ = (r_1,\ldots, r_k) \in \mathbb{H}(M)$ is given by $r_j = s_{i_j} - \mbox{\textnormal{lk}}(L_{i_j}, L \char92 M)/2$. The next Lemma was observed in \cite{YLspace} Lemma $1.10$.
\end{rem}
\begin{lem}\label{lem:sublink} Every sub-link of an $L$-space link is an $L$-space link.
\end{lem}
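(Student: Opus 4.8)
The plan is to sidestep the surgery definition and instead work through the module-theoretic characterization of $L$-space links recorded in the Remark following Theorem \ref{thm:Lprop}: a link $L'$ is an $L$-space link if and only if $H_*(A^-_{\mbox{\scriptsize{\textbf{s}}}}(L')) \cong \mathbb{F}[U]$ for every $\mbox{\textbf{s}} \in \mathbb{H}(L')$, with all the $U_i$ acting as $U$. So, writing $M = L_{i_1}\sqcup\cdots\sqcup L_{i_k}$ for the sub-link, it suffices to verify that $H_*(A^-_{\mbox{\scriptsize{\textbf{r}}}}(M)) \cong \mathbb{F}[U]$ for every $\mbox{\textbf{r}} \in \mathbb{H}(M)$, knowing by hypothesis that the corresponding statement holds for $L$. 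The bridge between the two is precisely Remark \ref{rem:shift}, which identifies an $A^-$ complex of $M$ with an $A^-$ complex of $L$ once the coordinates off $M$ are pushed to infinity.

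Concretely, I would fix a Heegaard diagram $\mathcal{H}$ for $L$ and an arbitrary $\mbox{\textbf{r}} = (r_1,\ldots,r_k) \in \mathbb{H}(M)$. Using the shift formula of Remark \ref{rem:shift}, I set $s_{i_j} = r_j + \mbox{lk}(L_{i_j}, L\char92 M)/2$ for the components of $M$ and choose each $s_j$ with $L_j \not\in M$ larger than $\max\{A_j(\mbox{\textbf{x}})\}$ over all generators of $\mathcal{H}$. A short check using $\mathbb{H}(L)_{i_j} = \mbox{lk}(L_{i_j}, L\char92 L_{i_j})/2 + \mathbb{Z}$ together with $\mbox{lk}(L_{i_j}, M\char92 L_{i_j})/2 + \mbox{lk}(L_{i_j}, L\char92 M)/2 = \mbox{lk}(L_{i_j}, L\char92 L_{i_j})/2$ confirms this defines a genuine $\mbox{\textbf{s}} \in \mathbb{H}(L)$. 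For this choice, Remark \ref{rem:shift} applied with $\varepsilon = \boldsymbol{1}$ gives a chain-level identification $A^-_{\mbox{\scriptsize{\textbf{s}}}}(\mathcal{H}) = A^-_{\mbox{\scriptsize{\textbf{r}}}}(\mathcal{H}')$, where $\mathcal{H}'$ is the induced diagram for $M$ obtained by forgetting the basepoints of the components not in $M$. Passing to homology and invoking Theorem \ref{thm:Lprop} for the $L$-space link $L$ then yields $H_*(A^-_{\mbox{\scriptsize{\textbf{r}}}}(M)) \cong H_*(A^-_{\mbox{\scriptsize{\textbf{s}}}}(L)) \cong \mathbb{F}[U]$, and since the $U_{i_j}$ already act as $U$ on the right-hand side, the module structure is exactly the one demanded. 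As $\mbox{\textbf{r}}$ was arbitrary, the characterization shows $M$ is an $L$-space link.

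The one point requiring genuine care, rather than bookkeeping, is the justification that driving the off-$M$ coordinates to $+\infty$ really collapses the constraints cutting out $A^-_{\mbox{\scriptsize{\textbf{s}}},\boldsymbol{1}}(L)$ down to those cutting out $A^-_{\mbox{\scriptsize{\textbf{r}}},\boldsymbol{1}}(M)$ — that is, for $L_j \not\in M$ the inequality $\max\{i_j, j_j - s_j\} \le 0$ degenerates to $i_j \le 0$ and the differential ceases to record $n_{z_j}(\phi)$, so the basepoint $z_j$ may be deleted. This is the substance of Remark \ref{rem:shift}, and here it is legitimate to treat it as given; the only thing I must supply is that the finitely many generators of $\mathcal{H}$ admit a uniform Alexander bound, so that a single large value of each $s_j$ works for all of them simultaneously. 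Everything beyond this is the formal deduction above, and no holomorphic-disk analysis past what is already packaged in Theorem \ref{thm:Lprop} and Remark \ref{rem:shift} is needed.
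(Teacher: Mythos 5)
Your proposal is correct and follows essentially the same route as the paper: both reduce the claim, via the characterization of $L$-space links by $H_*(A^-_{\mbox{\scriptsize{\textbf{s}}}}) \cong \mathbb{F}[U]$, to identifying each $A^-_{\mbox{\scriptsize{\textbf{r}}}}(M)$ with some $A^-_{\mbox{\scriptsize{\textbf{s}}}}(L)$ through Remark \ref{rem:shift} and then invoking Theorem \ref{thm:Lprop} for $L$. You merely spell out the bookkeeping (the shift formula, the check that $\mbox{\textbf{s}} \in \mathbb{H}(L)$, and the uniform bound on Alexander gradings) that the paper's two-line proof leaves implicit.
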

\begin{proof}
Suppose that $M\subset L$ is some sub-link. It suffices to show that, for any \textbf{r} $\in \mathbb{H}(M)$, we have $H_*(A_{\scriptsize{\mbox{\textnormal{\textbf{r}}}}}^-(M)) \cong \mathbb{F}[U]$. This is true because $H_*(A_{\scriptsize{\mbox{\textnormal{\textbf{r}}}}}^-(M)) \cong H_*(A_{\scriptsize{\mbox{\textnormal{\textbf{s}}}}}^-(L))$ for some \textbf{s} $\in \mathbb{H}$ as shown above. 
\end{proof}
\begin{lem}\label{lem:PL}
\[\sum_{\scriptsize{\mbox{\textnormal{\textbf{s}}}}\in \mathbb{H}} \chi(HFL^-(L,\textnormal{\textbf{s}}))u^{\scriptsize{\mbox{\textnormal{\textbf{s}}}}} = P^L_\emptyset(u_1,\ldots,u_l)\]

\end{lem}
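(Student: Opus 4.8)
The plan is to deduce the statement from the short exact cube $\boldsymbol{C}_{\mathbf{s}}$ built in the proof of Lemma \ref{lem:hat}, which packages the passage from $HFL^-$ to $\widehat{HFL}$, together with the definition of $\Delta_L$ in Definition \ref{def:Apoly}. Recall that this cube satisfies $H_*(\overline{\boldsymbol{C}}_{\mathbf{s}}) \cong \widehat{HFL}(L,\mathbf{s})$, while for $\varepsilon \in \{0,1\}^l$ its corner homologies are the grading-shifted copies $U_1^{1-\varepsilon_1}\cdots U_l^{1-\varepsilon_l}HFL^-(L,\mathbf{s}+\boldsymbol{1}-\varepsilon)$ of link Floer homology. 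Choosing a Heegaard diagram with no free basepoints, $CFL^-(\mathcal{H},\mathbf{s}) = \overline{\boldsymbol{A}^-(\mathcal{H},\mathbf{s})}$ is finitely generated (it has one generator for each intersection point $\mathbf{x}$ with $A_k(\mathbf{x}) \leq s_k$ for every $k$), so each $HFL^-(L,\mathbf{s})$ is finite dimensional and all the Euler characteristics below are well defined. First I would record that, iterating along each coordinate the relation $\chi(\boldsymbol{C}_{(2)}) = \chi(\boldsymbol{C}_{(1)}) - \chi(\boldsymbol{C}_{(0)})$ coming from the short exact sequences $0 \to \boldsymbol{C}_{(0)} \to \boldsymbol{C}_{(1)} \to \boldsymbol{C}_{(2)} \to 0$ (exactly the recursion used in Lemma \ref{lem:echar}) yields
\[\chi\bigl(\widehat{HFL}(L,\mathbf{s})\bigr) = \sum_{\varepsilon \in \{0,1\}^l} (-1)^{l-|\varepsilon|}\,\chi\bigl(H_*(\boldsymbol{C}_{\mathbf{s},\varepsilon})\bigr).\]
Since multiplication by the $U_i$ shifts Maslov grading by an even amount it leaves $\chi$ unchanged, so $\chi(H_*(\boldsymbol{C}_{\mathbf{s},\varepsilon})) = \chi(HFL^-(L,\mathbf{s}+\boldsymbol{1}-\varepsilon))$; substituting $\delta = \boldsymbol{1}-\varepsilon$ rewrites this as
\[\chi\bigl(\widehat{HFL}(L,\mathbf{s})\bigr) = \sum_{\delta \in \{0,1\}^l} (-1)^{|\delta|}\,\chi\bigl(HFL^-(L,\mathbf{s}+\delta)\bigr).\]

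Next I would pass to generating functions. Setting $G(u) = \sum_{\mathbf{s}}\chi(HFL^-(L,\mathbf{s}))u^{\mathbf{s}}$ and summing the last identity against $u^{\mathbf{s}}$, the reindexing $\mathbf{t} = \mathbf{s}+\delta$ factors the right-hand side, giving $\sum_{\mathbf{s}}\chi(\widehat{HFL}(L,\mathbf{s}))u^{\mathbf{s}} = \prod_{i=1}^l(1-u_i^{-1})\,G(u)$. On the other hand, Definition \ref{def:Apoly} together with the factorization $u_i^{1/2}-u_i^{-1/2} = u_i^{1/2}(1-u_i^{-1})$ gives $\sum_{\mathbf{s}}\chi(\widehat{HFL}(L,\mathbf{s}))u^{\mathbf{s}} = \prod_{i=1}^l(1-u_i^{-1})\,P^L_\emptyset$. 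Equating the two expressions and cancelling the nonzero common factor $\prod_{i}(1-u_i^{-1})$ in the integral domain of Laurent polynomials yields $G(u) = P^L_\emptyset$, which is the claim.

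The step that requires care — and the main obstacle — is that this cancellation (and the reindexing that precedes it) is only legitimate once I know $G(u)$ is a genuine finite Laurent polynomial and not merely a formal series. I would establish this by bounding the support of $HFL^-$. If some $s_k < \min_{\mathbf{x}}A_k(\mathbf{x})$ then $CFL^-(\mathcal{H},\mathbf{s})$ has no generators and $HFL^-(L,\mathbf{s}) = 0$. If instead some $s_k \geq \max_{\mathbf{x}}A_k(\mathbf{x})+1$, then in the coordinate-$k$ short exact sequence $0 \to A^-_{\mathbf{s},\varepsilon'} \to A^-_{\mathbf{s},\varepsilon} \to A^-_{\mathbf{s},\boldsymbol{2}} \to 0$ (where $\varepsilon'$ and $\varepsilon$ agree with $\boldsymbol{2}$ except in the $k$th slot, where they equal $0$ and $1$) the two threshold conditions $\max\{i_k,j_k-(s_k-1)\}\leq 0$ and $\max\{i_k,j_k-s_k\}\leq 0$ both collapse to $i_k \leq 0$, so the inclusion is an isomorphism of complexes and hence $HFL^-(L,\mathbf{s}) = H_*(A^-_{\mathbf{s},\boldsymbol{2}}) = 0$. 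Thus $HFL^-(L,\mathbf{s})$ vanishes outside the finite box $\prod_{k}[\min_{\mathbf{x}}A_k(\mathbf{x}),\max_{\mathbf{x}}A_k(\mathbf{x})]$, so $G(u)$ is a Laurent polynomial and the argument closes. I note that none of this uses that $L$ is an $L$-space link; the identity holds for an arbitrary link.
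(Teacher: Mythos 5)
Your reduction of the lemma to the finite-difference identity $\chi(\widehat{HFL}(L,\mathbf{s})) = \sum_{\delta\in\{0,1\}^l}(-1)^{|\delta|}\chi(HFL^-(L,\mathbf{s}+\delta))$, and hence to the generating-function identity $\sum_{\mathbf{s}}\chi(\widehat{HFL}(L,\mathbf{s}))u^{\mathbf{s}} = \prod_i(1-u_i^{-1})\,G(u)$, is correct (and is precisely the inverse of the paper's manipulation, which multiplies the $\widehat{HFL}$ series by $\prod_i\sum_{a\geq 0}u_i^{-a}$ instead of dividing by $\prod_i(1-u_i^{-1})$). But the step you yourself single out as the crux is wrong, and the error is a reversed inequality. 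A generator of $CFL^-(\mathcal{H},\mathbf{s}) = A^-_{\mathbf{s},\boldsymbol{2}}$ is a tuple $[\mathbf{x},i_1,s_1,\ldots,i_l,s_l]$ with $i_k = s_k - A_k(\mathbf{x}) \leq 0$, i.e.\ there is one generator for each $\mathbf{x}$ with $A_k(\mathbf{x}) \geq s_k$ for all $k$ --- the opposite of the condition you state. So decreasing $\mathbf{s}$ \emph{adds} generators rather than removing them, and your claim that $HFL^-(L,\mathbf{s})=0$ once some $s_k < \min_{\mathbf{x}}A_k(\mathbf{x})$ is false. Test it on the unknot: $CFL^-(\mathcal{H},s)$ has one generator and no differential for every $s\leq 0$, so $HFL^-(U,s)\cong\mathbb{F}$ for all $s\leq 0$. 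In general, for $\mathbf{s}\ll 0$ the complex becomes the fully $z$-blocked complex of a multi-pointed diagram for $S^3$, with homology $\mathbb{F}^{2^{l-1}}\neq 0$. Hence $G(u)$ is never supported in a finite box; for $l=1$ it is genuinely an infinite series (this is exactly why the definition of $P^L_{L_S}$ carries the factor $\sum_{i\geq 0}u_{j_1}^{-i}$ in the one-variable case), so the cancellation ``in the integral domain of Laurent polynomials'' cannot be performed, and for $l\geq 2$, where $G$ does turn out to be a Laurent polynomial, you have no proof of that fact without already knowing the lemma.

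The good news is that only the correct half of your support argument is needed. Your upper bound is right: if $s_k > \max_{\mathbf{x}}A_k(\mathbf{x})$ for some $k$ then $A^-_{\mathbf{s},\boldsymbol{2}}$ has no generators, so $G(u)$ lies in the ring of formal Laurent series supported in a region of the form $\{\mathbf{s}\leq \mathbf{c}\}$. That ring is an integral domain in which each $1-u_i^{-1}$ is a unit, with inverse $\sum_{a\geq 0}u_i^{-a}$, and $P^L_\emptyset$ lies in the same ring (for $l=1$ it is the infinite series $u_1^{1/2}\Delta_L$); carrying out the cancellation there closes your argument. Note, however, that once you invert $\prod_i(1-u_i^{-1})$ explicitly you have essentially reproduced the paper's proof: the paper simply writes $G(u) = \bigl(\prod_i\sum_{a\geq 0}u_i^{-a}\bigr)\sum_{\mathbf{s}}\chi(\widehat{HFL}(L,\mathbf{s}))u^{\mathbf{s}}$ --- which amounts to the fact that $CFL^-(\mathcal{H},\mathbf{s})$ is filtered by Alexander grading with associated graded $\bigoplus_{\mathbf{s}'\geq\mathbf{s}}\widehat{CFL}(\mathcal{H},\mathbf{s}')$ --- and then evaluates $\prod_i\frac{u_i^{1/2}-u_i^{-1/2}}{1-u_i^{-1}}\Delta_L = \bigl(\prod_i u_i^{1/2}\bigr)\Delta_L = P^L_\emptyset$, using Proposition $9.1$ of the Ozsv\'ath--Szab\'o link paper (the paper's Definition \ref{def:Apoly}) exactly as you do.
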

\begin{proof} It was shown in \cite{OZLinks} proposition 9.1 that when $l>1$
\[\sum_{\scriptsize{\mbox{\textnormal{\textbf{s}}}}\in \mathbb{H}} \chi(\widehat{HFL}(L,\textnormal{\textbf{s}}))u^{\scriptsize{\mbox{\textnormal{\textbf{s}}}}} = \pm \left(\prod_{i=1}^l u_i^{\frac{1}{2}} - u_i^{-\frac{1}{2}}\right)\Delta_L\]
and we have chosen sign conventions so that
\[\sum_{\scriptsize{\mbox{\textnormal{\textbf{s}}}}\in \mathbb{H}} \chi(\widehat{HFL}(L,\textnormal{\textbf{s}}))u^{\scriptsize{\mbox{\textnormal{\textbf{s}}}}} = \left(\prod_{i=1}^l u_i^{\frac{1}{2}} - u_i^{-\frac{1}{2}}\right)\Delta_L.\]
and so for $l>1$ it follows that
\begin{eqnarray*}
\sum_{\scriptsize{\mbox{\textnormal{\textbf{s}}}}\in \mathbb{H}} \chi(HFL^-(L,\textnormal{\textbf{s}}))u^{\scriptsize{\mbox{\textnormal{\textbf{s}}}}} & = & \left(\prod_{(a_1,\ldots, a_l)\in \mathbb{Z}_{\leq 0}^l} u_1^{a_1}\ldots u_l^{a_l}\right)\left(\sum_{\scriptsize{\mbox{\textnormal{\textbf{s}}}}\in \mathbb{H}} \chi(\widehat{HFL}(L,\textnormal{\textbf{s}}))u^{\scriptsize{\mbox{\textnormal{\textbf{s}}}}}\right) \\
& = &  \left(\prod_{(a_1,\ldots, a_l)\in \mathbb{Z}_{\leq 0}^l} u_1^{a_1}\ldots u_l^{a_l}\right)\left(\prod_{i=1}^l u_i^{\frac{1}{2}} - u_i^{-\frac{1}{2}}\right)\Delta_L\\
& = &  \left(\prod_{i=1}^l \frac{u_i^{\frac{1}{2}} - u_i^{-\frac{1}{2}}}{1-u_i^{-1}}\right) \Delta_L\\
& = & \sqrt{u_1u_2\ldots u_l} \Delta_L\\
& = & P^L_\emptyset.
\end{eqnarray*}
When $l=1$, it was shown in \cite{OZKnot} that:
\[\sum_{\scriptsize{\mbox{\textnormal{\textbf{s}}}}\in \mathbb{H}} \chi(\widehat{HFL}(L,\textnormal{\textbf{s}}))u^{\scriptsize{\mbox{\textnormal{\textbf{s}}}}} = \pm \Delta_L(u_1);\]
and so the result follows by the same argument as above. 
\end{proof}
\begin{defn}
Suppose we are given a Heegaard diagram $\mathcal{H}$ for an $L$-space link $L \subset S^3$. Define a directed labeled graph $\mathfrak{T}(\mathcal{H})$ as follows:
\begin{itemize}
\item The vertices correspond to the elements of $\mathbb{H}(L)$.
\item There is a directed edge from \textbf{s} $ = (s_1, \ldots, s_l)$ to \textbf{s}$' = (s'_1, \ldots, s'_l)$ if for some $i$ we have $s'_i = s_i+1$ and $s'_j = s_j$ for every $j \neq i$. We will call this edge $e_{\scriptsize{\mbox{\textbf{ss}}}'}$.
\item If \textbf{s} and \textbf{s}$'$, are as above then define $\varepsilon \in E_l$ so that $\varepsilon_j = 1$ if $j \neq i$ and $\varepsilon_i = 0$. Then the label of edge $e_{\scriptsize{\mbox{\textbf{ss}}}'}$ is the same as the label of the edge between $\varepsilon$ and \textbf{1} in $HC(\boldsymbol{A}^-(L,$ \textbf{s}$'))$.
\end{itemize}
Just as in corollary \ref{cor:inv}, the graph $\mathfrak{T}(\mathcal{H})$ is an invariant of $L\subset S^3$. So we will simply say $\mathfrak{T}(L)$. We will denote by $^j_s\mathfrak{T}(L)$ the subgraph of $\mathfrak{T}(L)$ that is obtained by restricting to the hyperplane with $j$th coordinate equal to $s$.
\end{defn}

\begin{defn} Suppose that $L\subset S^3$ is an $L$-space link. Then we recursively define $m(L) \in \mathbb{H}(L)$ as follows. If $L$ has only one component let $m(L)$ be the degree of $\Delta_L$. In general;
\[m(L)_i = \mbox{max}\left(\{\mbox{deg}_{u_i}(P^L_\emptyset)\}\cup \left\{\left. m(L\char92 L_j)_{i-1}+\frac{\mbox{lk}(L_i,L_j)}{2}\right\rvert j<i\right\} \cup \left\{\left. m(L\char92 L_j)_{i}+\frac{\mbox{lk}(L_i,L_j)}{2}\right\rvert j>i\right\}\right)\]
where by $\mbox{deg}_{u_i}(P^L_\emptyset)$ we mean the maximal degree of $u_i$ in any monomial of $P^L_\emptyset$. 
\end{defn}
\begin{prop}\label{prop:maxs}
For an $L$-space link $L \subset S^3$ suppose that $s \geq m(L)_j$. Then ${}^j_s\mathfrak{T}(L)$ is completely determined by $\mathfrak{T}(L\char92 L_j)$ and all the edges from ${}^j_s\mathfrak{T}(L)$ to ${}^{\;\;\;\;\;j}_{s+1}\mathfrak{T}(L)$ must be labeled with $0$.
\end{prop}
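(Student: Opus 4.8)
The plan is to reduce the whole statement to the top Maslov grading $\tau_L(\textbf{s})$ of the module $H_*(A^-_{\textbf{s}})\cong\mathbb{F}[U]$ (Theorem \ref{thm:Lprop}), and to control the jumps of $\tau_L$ via the Euler-characteristic identity of Lemma \ref{lem:PL}. First I would record that every label of $\mathfrak{T}(L)$ is simply an increment of $\tau_L$. Comparing constraints gives $A^-_{\textbf{s},\varepsilon}=A^-_{\textbf{s}-\boldsymbol{1}+\varepsilon}$ for $\varepsilon\in\{0,1\}^l$, so by property $5$ in the definition of a basic short exact cube (available here by Theorem \ref{thm:Aminus}) the label of the $\mathfrak{T}(L)$-edge from \textbf{s} to $\textbf{s}+e_i$ equals $\tfrac12\bigl(\tau_L(\textbf{s}+e_i)-\tau_L(\textbf{s})\bigr)\in\{0,1\}$. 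Hence the proposition becomes two numerical assertions: \emph{(a)} $\tau_L(\textbf{s}+e_j)=\tau_L(\textbf{s})$ whenever $s_j\ge m(L)_j$ (the labelling claim), and \emph{(b)} on each hyperplane $s_j=s\ge m(L)_j$ the function $\tau_L$ agrees, up to an additive constant, with $\tau_{L\setminus L_j}$ under the shift $r_i=s_i-\mathrm{lk}(L_i,L_j)/2$ of Remark \ref{rem:shift} (so that the slice increments coincide with those of $\mathfrak{T}(L\setminus L_j)$).

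For \emph{(a)} I would detect the $j$-th jump homologically. The inclusion raising the $j$-th coordinate sits in a short exact sequence
\[0\to A^-_{\textbf{s}-e_j}\to A^-_{\textbf{s}}\to B_{\textbf{s}}\to 0,\]
where $B_{\textbf{s}}:=A^-_{\textbf{s},\boldsymbol{1}+e_j}$ is the quotient that is ``hatted'' in coordinate $j$ and ``minus'' in the remaining coordinates. Since both outer homologies are copies of $\mathbb{F}[U]$ with \emph{even} top grading and the left map is injective (property $5$), $H_*(B_{\textbf{s}})$ is either $0$ or a single $\mathbb{F}$ in even degree, and it is nonzero exactly when $\tau_L$ jumps between levels $s_j-1$ and $s_j$; thus $\chi(H_*(B_{\textbf{s}}))\in\{0,1\}$, and it vanishes iff there is no jump. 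The computation I would then carry out is
\[\sum_{\textbf{s}}\chi(H_*(B_{\textbf{s}}))\,u^{\textbf{s}}=(1-u_j^{-1})\,P^L_\emptyset,\]
obtained by repeating the manipulation of Lemma \ref{lem:PL} with the ``hat'' factor $(u_j^{1/2}-u_j^{-1/2})$ in coordinate $j$ and the ``minus'' factors $(1-u_k^{-1})^{-1}$ in the others. As $(1-u_j^{-1})P^L_\emptyset$ has $u_j$-degree $\deg_{u_j}(P^L_\emptyset)\le m(L)_j$, its coefficient vanishes at every \textbf{s} with $s_j>m(L)_j$; the $L$-space dichotomy then forces $H_*(B_{\textbf{s}})=0$, i.e.\ there is no jump above level $m(L)_j$, which is precisely \emph{(a)}.

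For \emph{(b)} I would feed \emph{(a)} into the stabilization of Remark \ref{rem:shift}. Fixing the coordinates other than $j$, assertion \emph{(a)} makes $s_j\mapsto\tau_L(\textbf{s})$ constant on $[m(L)_j,\infty)$; on the other hand Remark \ref{rem:shift} identifies $A^-_{\textbf{s}}(L)$ with $A^-_{\textbf{r}}(L\setminus L_j)$ once $s_j$ exceeds $\max_{\textbf{x}}A_j(\textbf{x})$, giving $\tau_L(\textbf{s})=\tau_{L\setminus L_j}(\textbf{r})$ for all sufficiently large $s_j$. Constancy then propagates this equality down to every $s_j\ge m(L)_j$. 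Taking increments in the directions $i\ne j$ shows that each label of ${}^j_s\mathfrak{T}(L)$ equals the corresponding label of $\mathfrak{T}(L\setminus L_j)$, so the slice is completely determined by $\mathfrak{T}(L\setminus L_j)$, as claimed.

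The step I expect to be the crux is the middle one: establishing the generating-function identity for $B_{\textbf{s}}$ and, above all, passing from ``Euler characteristic zero'' to ``homology zero''. This last implication is exactly where the $L$-space hypothesis is indispensable, since it is only because $H_*(B_{\textbf{s}})$ is a priori at most one-dimensional and supported in even grading that the vanishing of $\chi$ can detect the vanishing of the group, and hence the absence of a grading jump. The remaining ingredients are bookkeeping with the cube $\boldsymbol{A}^-$; in particular the argument only uses the inequality $m(L)_j\ge\deg_{u_j}(P^L_\emptyset)$ that is built into the definition of $m(L)_j$.
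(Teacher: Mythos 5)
Your two structural reductions are fine: the edge labels of $\mathfrak{T}(L)$ are indeed the increments $\tfrac12(\tau_L(\mathbf{s}+e_i)-\tau_L(\mathbf{s}))$, and $H_*(B_{\mathbf{s}})$ is either $0$ or a single $\mathbb{F}$ in even degree, nonzero exactly when there is a jump (this parallels the proof of Theorem \ref{thm:Aminus}). The fatal gap is the generating-function identity $\sum_{\mathbf{s}}\chi(H_*(B_{\mathbf{s}}))\,u^{\mathbf{s}}=(1-u_j^{-1})P^L_\emptyset$. It is false, and the proposed derivation cannot work: $B_{\mathbf{s}}$ is infinite dimensional over $\mathbb{F}$ (it retains the $U_k$-towers in the coordinates $k\neq j$), so its homological Euler characteristic is not computed by the naive signed count of generators that underlies the manipulation of Lemma \ref{lem:PL}. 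What the cube's short exact sequences actually give—iterating, in the coordinates $k\neq j$, the sequences whose sub-object lowers $s_k$ by one and whose quotient has $\varepsilon_k=2$, starting from $B_{\mathbf{s}}$, the first stage at which homology is finite dimensional—is the relation $P^L_\emptyset=\prod_{k\neq j}(1-u_k)\cdot\sum_{\mathbf{s}}\chi(H_*(B_{\mathbf{s}}))\,u^{\mathbf{s}}$. Multiplication by $\prod_{k\neq j}(1-u_k)$ has an enormous kernel on the space of formal series in which the jump-counting function lives (that series is infinite in the $u_k^{\pm1}$ directions), so $P^L_\emptyset$ alone cannot determine where the jumps are; one needs boundary conditions at infinity, i.e.\ sublink data.

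A concrete counterexample both to your identity and to the statement (a) you extract from it is any split $L$-space link, simplest of all the two-component unlink: there $P^L_\emptyset=0$, so your formula forces $H_*(B_{\mathbf{s}})=0$ for every $\mathbf{s}$, i.e.\ no jumps of $\tau_L$ in the $j$-direction at any level; but the inclusion $A^-_{(s_1-1,s_2)}\to A^-_{(s_1,s_2)}$ fails to be a quasi-isomorphism for every $s_1\leq 0$ (each factor behaves like the unknot). The same failure occurs for essentially every multi-component $L$-space link, e.g.\ the Hopf link, where your right-hand side is a finite Laurent polynomial while jumps in the $j$-direction recur for infinitely many values of the other coordinate. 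This is precisely why $m(L)_j$ is defined as a maximum involving not only $\deg_{u_j}(P^L_\emptyset)$ but also the shifted sublink quantities $m(L\setminus L_i)+\mathrm{lk}(L_j,L_i)/2$; your closing remark that the argument uses only the inequality $m(L)_j\geq\deg_{u_j}(P^L_\emptyset)$ is the telltale sign of the error, since the proposition with $m(L)_j$ replaced by $\deg_{u_j}(P^L_\emptyset)$ is false. The paper supplies the missing data by inducting on the number of components: Remark \ref{rem:shift} identifies slices of $\mathfrak{T}(L)$ at large coordinates with $\mathfrak{T}$ of sublinks, which determines the hypercube subgraph $\widetilde{HC}(\boldsymbol{A}^-(L,\mathbf{s}))$ at each relevant lattice point; then Lemma \ref{lem:HC} together with the Euler-characteristic comparison of Lemma \ref{lem:echar} (using the vanishing of the relevant coefficient of $P^L_\emptyset$) pins down the remaining labels in a downward induction on the $j$-th coordinate. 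Your step (b), propagating the identification with $\mathfrak{T}(L\setminus L_j)$ down from $s_j\gg 0$ via constancy of $\tau_L$, is sound, but it takes (a) as input, so the proof as a whole does not stand.
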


\begin{proof}
First note that $\mathfrak{T}(L\char92 L_j)$ only makes sense in light of Lemma \ref{lem:sublink} from which it follows that $L\char92 L_j$ is an $L$-space link. Pick \textbf{m} $= (m_1,\ldots,m_l) \in \mathbb{H}(L)$ so that for any $1\leq i \leq l$, $m_i > A_i(\mbox{\textbf{x}})$ for every generator $\mbox{\textbf{x}}$. Then, we claim that whenever $s_i > m_i$, ${}^{\;\;i}_{s_i}\mathfrak{T}(L)$ is completely determined by $\mathfrak{T}(L\char92 L_i)$ and all the edges from ${}^{\;\;i}_{s_i}\mathfrak{T}(L)$ to ${}^{\;\;\;\;\;\;i}_{s_i+1}\mathfrak{T}(L)$ must be labeled with $0$. We prove this claim when $i = l$. Since $s_l > m_l$, the inclusion between $A^{-}_{(s_1,\ldots,s_l)}$ and $A^{-}_{(s_1,\ldots,s_l+1)}$ induces an isomorphism on homology. So the edge between $(s_1,\ldots,s_l)$ and $(s_1,\ldots,s_l+1)$ is labeled with $0$. Following Remark \ref{rem:shift} we get that the edge between $(s_1, \ldots,s_i, \ldots, s_l)$ and $(s_1, \ldots,s_i+1, \ldots, s_l)$ has the same label as the edge between $\left(s_1 - \frac{\mbox{\scriptsize{lk}}(L_1, L_l)}{2}, \ldots, s_i - \frac{\mbox{\scriptsize{lk}}(L_i, L_l)}{2}, \ldots s_{l-1} - \frac{\mbox{\scriptsize{lk}}(L_{l-1}, L_l)}{2}\right)$ and $\left(s_1 - \frac{\mbox{\scriptsize{lk}}(L_1, L_l)}{2}, \ldots, s_i - \frac{\mbox{\scriptsize{lk}}(L_i, L_l)}{2}+1, \ldots, s_{l-1} - \frac{\mbox{\scriptsize{lk}}(L_{l-1}, L_l)}{2}\right)$ in $\mathfrak{T}(L\char92 L_l)$ and so this proves the claim.\\

Now we are ready to prove the proposition.\\

We will prove this by induction on $l$. If $m_j-1\geq m(L)_j$, for some fixed j, the edge between $(s_1,\ldots,m_j-1,\ldots,s_l)$ and $(s_1,\ldots,m_j,\ldots,s_l)$ is labeled zero if $s_i \geq m_i$ for every $i \neq j$ (by induction).\\

 Notice that this determines $\widetilde{HC}(\boldsymbol{A}^-(L,(s_1,\ldots,m_j,\ldots,s_l)))$. One valid (in the sense of Remark \ref{rem:path}) labeling of the remaining edges in $HC(\boldsymbol{A}^-(L,(s_1,\ldots,m_j,\ldots,s_l)))$ is given by setting all the edges between $HC(\boldsymbol{A}^-(L,(s_1,\ldots,m_j,\ldots,s_l))) \cap {}^{\;\;\;\;\;\;j}_{m_j-1}\mathfrak{T}(L)$ and $HC(\boldsymbol{A}^-(L,(s_1,\ldots,m_j,\ldots,s_l))) \cap {}^{\;\;j}_{m_j}\mathfrak{T}(L)$ to be zero and letting an edge between \textbf{s}$_1$ and \textbf{s}$_2$ in $HC(\boldsymbol{A}^-(L,(s_1,\ldots,m_j,\ldots,s_l))) \cap {}^{\;\;\;\;\;\;j}_{m_j-1}\mathfrak{T}(L)$ have the same labeling as the edge between \textbf{s}$_1'$ and \textbf{s}$_2'$ in $HC(\boldsymbol{A}^-(L,(s_1,\ldots,m_j,\ldots,s_l))) \cap {}^{\;\;j}_{m_j}\mathfrak{T}(L)$ where \textbf{s}$_1'$ and \textbf{s}$_2'$ are the same as \textbf{s}$_1$ and \textbf{s}$_2$ after adding one to the $j$th coordinate.

Since $m_j-1>\mbox{deg}_{u_j}P^L_\emptyset$ we must have $\chi(H_*(\overline{\boldsymbol{A}^-(L,(s_1,\ldots,m_j,\ldots,s_l))})) = 0$ and so the labeling for $HC(\boldsymbol{A}^-(L,(s_1,\ldots,m_j,\ldots,s_l)))$ described above is the correct one since it yields the correct Euler characteristic (see Remark \ref{lem:echar} and Lemma \ref{lem:HC}). We can similarly fill in all of ${}^{\;\;\;\;\;\;j}_{m_j-1}\mathfrak{T}(L)$ and all the edges between ${}^{\;\;\;\;\;\;j}_{m_j-1}\mathfrak{T}(L)$ and ${}^{\;\;j}_{m_j}\mathfrak{T}(L)$ are labeled $0$. Repeating this process by inductively decreasing the $j$th coordinate proves the claim. 
\end{proof}

\begin{lem} For a $2$ or $3$ component $L$-space link, $\mathfrak{T}(L)$ completely determines $HFL^-(L,$ \textnormal{\textbf{s}}$)$ for every \textnormal{\textbf{s}} $\in \mathbb{H}(L)$.
\end{lem}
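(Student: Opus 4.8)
The plan is to reduce the computation of $HFL^-(L,\mathbf{s}) = H_*\!\left(\overline{\boldsymbol{A}^-(L,\mathbf{s})}\right)$ to a single application of Lemma \ref{lem:comp}, and then to verify that every piece of input data that Lemma demands can be read off from $\mathfrak{T}(L)$. Since $L$ is an $L$-space link with $l\in\{2,3\}$ components, Theorem \ref{thm:Aminus} guarantees that $\boldsymbol{A}^-(L,\mathbf{s})$ is a basic short exact cube of dimension $l\le 3$, and $\overline{\boldsymbol{A}^-(L,\mathbf{s})} = CFL^-(\mathcal{H},\mathbf{s})$ has homology $HFL^-(L,\mathbf{s})$. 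Thus Lemma \ref{lem:comp} applies and tells us that $HFL^-(L,\mathbf{s})$ is determined, as a graded vector space, once we know the homologies $H_*(A^-_{\mathbf{s},\varepsilon})$ for $\varepsilon$ in the cube of inclusions together with all the induced maps $(i_{\varepsilon'\varepsilon})_*$. For a basic cube, every such homology is $\mathbb{F}[U]$ with a single top grading, and each induced map is either an isomorphism or injective with a grading shift; hence this entire package is encoded by the hypercube graph $HC(\boldsymbol{A}^-(L,\mathbf{s}))$ together with the top grading of $H_*(A^-_{\mathbf{s},\boldsymbol{0}})$. So the theorem will follow once I show that both of these are recoverable from $\mathfrak{T}(L)$.

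For the hypercube graph, the key bookkeeping observation is that setting the $k$th coordinate of $\varepsilon$ to $0$ in the definition of $A^-_{\mathbf{s},\varepsilon}$ amounts to replacing $s_k$ by $s_k-1$ at level $1$; hence $A^-_{\mathbf{s},\varepsilon} = A^-_{\mathbf{s}-\boldsymbol{1}+\varepsilon}$ for every $\varepsilon\in\{0,1\}^l$, where $\mathbf{s}-\boldsymbol{1}+\varepsilon$ denotes the lattice point with $k$th coordinate $s_k-1+\varepsilon_k$. Under this identification an edge of $HC(\boldsymbol{A}^-(L,\mathbf{s}))$ from $\varepsilon'$ to $\varepsilon'+e_j$ records exactly the label of the inclusion-induced map $H_*(A^-_{\mathbf{s}-\boldsymbol{1}+\varepsilon'}) \to H_*(A^-_{\mathbf{s}-\boldsymbol{1}+\varepsilon'+e_j})$, which, unwinding the definition of $\mathfrak{T}(L)$, is precisely the label of the corresponding edge of $\mathfrak{T}(L)$. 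Consequently $HC(\boldsymbol{A}^-(L,\mathbf{s}))$ is literally the labeled subgraph of $\mathfrak{T}(L)$ spanned by the $2^l$ vertices $\{\mathbf{s}-\boldsymbol{1}+\varepsilon : \varepsilon\in\{0,1\}^l\}$, so it is determined by $\mathfrak{T}(L)$.

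For the top grading of $H_*(A^-_{\mathbf{s},\boldsymbol{0}}) = H_*(A^-_{\mathbf{s}-\boldsymbol{1}})$, I would anchor at the region of large $\mathbf{s}$, where the proof of Theorem \ref{thm:Aminus} already shows $H_*(A^-_{\mathbf{s}}) \cong \mathbb{F}_{(0)}[U]$, so the top grading there equals $0$; Proposition \ref{prop:maxs} identifies this stable region explicitly. By property $5$ of a basic short exact cube, an edge of $\mathfrak{T}(L)$ labeled $1$ in direction $i$ raises the top grading by $2$ as $s_i$ increases, and an edge labeled $0$ leaves it unchanged, so the top grading at any $\mathbf{t}$ is obtained by summing $-2$ over the label-$1$ edges along any descending path from the large region down to $\mathbf{t}$. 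This sum is path-independent because $\boldsymbol{A}^-(L,\mathbf{s})^I$ is commutative (Remark \ref{rem:path}), so the grading is well-defined and determined by $\mathfrak{T}(L)$. Feeding the hypercube graph and this top grading into Lemma \ref{lem:comp} then pins down $HFL^-(L,\mathbf{s})$ for every $\mathbf{s}$.

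The step I expect to be essential — and the reason the statement is confined to $l\le 3$ — is the appeal to Lemma \ref{lem:comp}: for $l\ge 4$ the graded vector space $H_*(\overline{\boldsymbol{C}})$ is genuinely \emph{not} determined by $HC(\boldsymbol{C})$ and the base grading, as Remark \ref{rem:4fail} exhibits. Everything else is a matter of matching definitions: identifying the cube faces $A^-_{\mathbf{s},\varepsilon}$ with the shifted complexes $A^-_{\mathbf{s}-\boldsymbol{1}+\varepsilon}$, recognizing each hypercube graph as a subgraph of $\mathfrak{T}(L)$, and propagating the absolute grading from the universally pinned large-$\mathbf{s}$ region.
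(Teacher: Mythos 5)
Your proposal is correct and follows essentially the same route as the paper: both reduce the computation to Lemma \ref{lem:comp} by observing that $\mathfrak{T}(L)$ encodes every hypercube graph $HC(\boldsymbol{A}^-(L,\mathbf{s}))$, and both pin down the absolute grading by anchoring at large $\mathbf{s}$, where $H_*(A^-_{\mathbf{s}}(L)) \cong HF^-(S^3) = \mathbb{F}_{(0)}[U]$. Your write-up merely makes explicit the identification $A^-_{\mathbf{s},\varepsilon} = A^-_{\mathbf{s}-\boldsymbol{1}+\varepsilon}$ and the grading propagation that the paper leaves implicit.
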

\begin{proof}  Note that $\mathfrak{T}(L)$ determines all the hypercube graphs of $\boldsymbol{A}^-(L,$ \textbf{s}$)$ for any \textbf{s} $\in \mathbb{H}(L)$. Thus, by Lemma \ref{lem:comp} and Remark \ref{rem:path} we get that $\mathfrak{T}(L)$ determines all the $HFL^-(L,$ \textbf{s}$)$ upto an even shift in absolute grading. To fix the grading note that we can pick \textbf{s} $\in \mathbb{H}(L)$ so that any edge emerging from \textbf{s}$' \geq $ \textbf{s} is $0$ since for \textbf{s} sufficiently large $H_*(A_{\scriptsize{\mbox{\textbf{s}}}}^-(L)) \cong HF^-(S^3) = \mathbb{F}_{(0)}[U]$. This fixes the grading as required.
\end{proof}

\begin{lem} For an $L$-space link $L$, the graph $\mathfrak{T}(L)$ is determined by the polynomials $\pm\Delta_{M}$ and the linking numbers \textnormal{lk}$(L_i, M)$ where $M$ is any sublink of $L$. 
\end{lem}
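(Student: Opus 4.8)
The plan is to induct on the number of components $l$, and at each stage to reconstruct every edge label of $\mathfrak{T}(L)$ from two pieces of information: the Euler characteristics $\chi(HFL^-(L,\mathbf{s}))$, which by Lemma \ref{lem:PL} are exactly the coefficients of $P^L_\emptyset = \sqrt{u_1\cdots u_l}\,\Delta_L$ and hence are read off from $\Delta_L$; and the combinatorial rigidity of basic short exact cubes established in Section \ref{sec:Homological}. For the base case $l=1$ the graph $\mathfrak{T}(L)$ is a line, and the label of the edge from $s$ to $s+1$ records whether $A^-_s \hookrightarrow A^-_{s+1}$ is a homology isomorphism. By the $n=1$ analysis in Lemmas \ref{lem:comp} and \ref{lem:echar} this label equals $\chi(HFL^-(L,s+1))$, i.e.\ the coefficient of $u^{s+1}$ in $P^L_\emptyset$, which is $0$ or $1$ by Remark \ref{rem:l=1}; so $\mathfrak{T}(L)$ is determined by $\Delta_L$.

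For the inductive step, first note that by Lemma \ref{lem:sublink} every proper sublink $M \subset L$ is an $L$-space link, and the data attached to $M$ (the polynomials $\pm\Delta_{M'}$ for $M'\subset M$ and the relevant linking numbers) is a subset of the data for $L$; so by the inductive hypothesis each $\mathfrak{T}(L\setminus L_j)$ is determined. Combining Remark \ref{rem:shift} with Proposition \ref{prop:maxs}, for every $j$ and every $s \ge m(L)_j$ the hyperplane ${}^j_s\mathfrak{T}(L)$, together with the upward edges leaving it, is determined by $\mathfrak{T}(L\setminus L_j)$ and the linking numbers $\mathrm{lk}(L_i,L_j)$ (which govern the shift in Remark \ref{rem:shift}). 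Consequently every edge $e_{\mathbf{t},\mathbf{t}+e_i}$ for which some coordinate $t_j \ge m(L)_j$ is already pinned down, and the only edges still in question are those with $t_j < m(L)_j$ for all $j$. The crucial feature of this undetermined set is that, although it is infinite (extending toward $-\infty$ in each coordinate), it is bounded above in total degree: every such edge has $\sum_k t_k < \sum_k m(L)_k$.

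To finish I would argue by uniqueness. Let $L$ and $L'$ be two $L$-space links with the same data; identifying $\mathbb{H}(L)=\mathbb{H}(L')$ (which depends only on the linking numbers), I must show $\mathfrak{T}(L)=\mathfrak{T}(L')$. By the previous paragraph the two graphs agree on all edges with some coordinate $\ge m(L)_j$, so any discrepancy occurs among the undetermined edges and is therefore bounded above in total degree. If the graphs differ at all, choose a differing edge $e=e_{\mathbf{t},\mathbf{t}+e_i}$ of maximal total degree. This edge is an origin edge of the unit cube $\boldsymbol{A}^-(L,\mathbf{t}+\boldsymbol{1})$, whose subgraph $\widetilde{HC}$ consists solely of edges of strictly larger total degree; by maximality $L$ and $L'$ agree on all of these, so $\widetilde{HC}(\boldsymbol{A}^-(L,\mathbf{t}+\boldsymbol{1}))=\widetilde{HC}(\boldsymbol{A}^-(L',\mathbf{t}+\boldsymbol{1}))$. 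Lemma \ref{lem:HC} now leaves only two ways the full cubes can differ: either the common subgraph already determines the whole cube, in which case the two agree on $e$ and there is no discrepancy; or the edges out of the origin are uniformly labeled, the two admissible labelings differing in $\chi(H_*(\overline{\boldsymbol{A}^-}))$ by $(-1)^l$ according to Lemma \ref{lem:echar}. But $\chi(H_*(\overline{\boldsymbol{A}^-(L,\mathbf{t}+\boldsymbol{1})}))=\chi(HFL^-(L,\mathbf{t}+\boldsymbol{1}))$ is the coefficient of $u^{\mathbf{t}+\boldsymbol{1}}$ in $P^L_\emptyset$, and likewise for $L'$; since $\Delta_L=\pm\Delta_{L'}$ these coefficients coincide, forcing the same labeling and contradicting that $e$ is a differing edge. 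Hence $\mathfrak{T}(L)=\mathfrak{T}(L')$.

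Two points deserve care. The first, and the main obstacle, is exactly the well-foundedness just exploited: because the lattice is unbounded below one cannot simply descend coordinate by coordinate, and the argument only works because the undetermined region---while infinite---is bounded above in total degree, so that a maximal differing edge exists; getting this bookkeeping right (and checking that the $\widetilde{HC}$ of the relevant unit cube really lives in strictly higher total degree) is the heart of the matter. The second is the global sign of $\Delta_L$: the comparison of Euler characteristics above needs the two coefficients to agree, not merely to agree up to an overall sign. This is not a problem because the sign is itself pinned---in the base case the nonzero coefficients of $P^L_\emptyset$ are forced to equal $+1$ (Remark \ref{rem:l=1}), and this normalization propagates through the reduction of Proposition \ref{prop:maxs}---so knowing each $\Delta_M$ only up to sign is enough to determine $\mathfrak{T}(L)$.
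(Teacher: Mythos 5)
Your overall strategy is sound and is, at its core, the same as the paper's: induct on the number of components, use Proposition \ref{prop:maxs} (via Lemma \ref{lem:sublink} and Remark \ref{rem:shift}) to determine every edge whose initial vertex has some coordinate at least $m(L)_j$, and then use the rigidity of basic short exact cubes (Lemmas \ref{lem:HC} and \ref{lem:echar}) together with the Euler characteristics supplied by Lemma \ref{lem:PL} to pin down the remaining edges. Where you differ is packaging: the paper runs a constructive descending induction, first determining $HC(\boldsymbol{A}^-(L,m(L)))$ and then filling in $\mathfrak{T}(L)$ cube by cube, whereas you argue by uniqueness, comparing two links with identical data and extracting a differing edge of maximal total degree. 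These are two formulations of the same well-founded induction; your formulation makes the bookkeeping (why the process reaches every edge even though the lattice is unbounded below) more explicit than the paper's closing sentence, which is a modest but genuine gain in rigor.

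The flaw is in your treatment of the sign, i.e.\ exactly the point you flagged as the second delicate issue. The claim that the $+1$ normalization of the base case ``propagates through the reduction of Proposition \ref{prop:maxs}'' is unjustified, and the proposed mechanism cannot work: since $m(L)_j \geq \deg_{u_j}(P^L_\emptyset)$, any coefficient of $P^L_\emptyset$ that is computable from the already-determined edges alone sits at a vertex $\mathbf{s}$ whose entire unit cube lies in the determined region, which forces $s_j > \deg_{u_j}(P^L_\emptyset)$ for some $j$, so that coefficient is zero. Thus the determined region carries no information about the global sign of $P^L_\emptyset$, and within the logic of your proof that sign is simply not available. Fortunately, your argument does not actually need the coefficients of $P^L_\emptyset$ and $P^{L'}_\emptyset$ to coincide, only that they agree up to one overall sign. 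In your case 2, Lemma \ref{lem:echar} forces $\chi\bigl(H_*(\overline{\boldsymbol{A}^-(L,\mathbf{t}+\boldsymbol{1})})\bigr)$ and $\chi\bigl(H_*(\overline{\boldsymbol{A}^-(L',\mathbf{t}+\boldsymbol{1})})\bigr)$ to differ by exactly $(-1)^l$, and two integers of the form $c$ and $\pm c$ can never differ by exactly $1$: the difference is either $0$ or $2c$, which is even. So the contradiction goes through with the data known only up to sign. This is precisely the paper's own device---its proof invokes only ``the absolute values of the coefficients of $\Delta_L$,'' which suffice because the two candidate Euler characteristics differ by $1$ and hence have distinct absolute values. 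Replace your sign-propagation sentence with this one observation and your proof is complete.
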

\begin{proof} We will prove this by inducting on $l$. First suppose that $l = 1$. Then $\pm\Delta_L$ completely determines $\pm P^L_\emptyset = \sum_{s \in \mathbb{Z}} a_s(u_1)^s$. The only possibilities for $|a_s|$ are either $1$ or $0$.  If $|a_s| = 1$ then this forces the edge between $s-1$ and $s$ to be labeled with $1$. If $a_s = 0$ then this forces the edge between $s-1$ and $s$ to be labeled with $0$. This proves the case when $l = 1$.\\
By proposition \ref{prop:maxs}, we see that the subgraph of $\mathfrak{T}(L)$ that is induced by all the vertices \textbf{s} $= (s_1, \ldots, s_l)$ satisfying \textbf{s}$_i \geq m(L)_i$ for some $i$, is completely determined by the relevant polynomials and linking numbers. 

For the rest of $\mathfrak{T}(L)$ note that every edge of $\widetilde{HC}(\boldsymbol{A}^-(L,m(L)))$ is contained inside the part of the graph whose labels we have already determined. By Lemma \ref{lem:HC}, this either completely determines $HC(\boldsymbol{A}^-(L,m(L)))$, or all the edges emerging from $(m(L)_1-1,\ldots, m(L)_l-1)$ are labeled with a $0$ or they are all labeled with $1$. If $HC(\boldsymbol{A}^-(L,m(L)))$ is not completely determined by $\widetilde{HC}(\boldsymbol{A}^-(L,\mbox{\textbf{m}}))$, then we can use Lemma \ref{lem:echar} to see that the absolute values of the coefficients of $\Delta_L$ are enough to determine if all the edges emerging from $(m(L)_1-1,\ldots, m(L)_l-1)$ are labeled with a $0$ or $1$. Thus, we now have computed $\widetilde{HC}(\boldsymbol{A}^-(L,(m_1,\ldots,m_i-1,\ldots,m_l))$ for any $i$ and so we can proceed as before to inductively fill out all of $\mathfrak{T}(L)$. This proves the Lemma.
\end{proof}

\begin{proof}[Proof of Theorem \ref{thm:main}] This follows immediately from the previous two Lemmas
\end{proof}

\begin{lem} Let $S = \{i_1,\ldots, i_k\} \subsetneq \{1,\ldots, l\}$ and suppose that $\{j_1,\ldots,j_{l-k}\} = \{1,\ldots, l\} \char92 S$ where $j_a < j_b$ when $a<b$. Pick \textnormal{\textbf{s}} $\in \mathbb{H}(L)$ so the $s_{i_p} \geq m(L)_{i_p}$. Then if $a_{s_{j_1},s_{j_2},\ldots,s_{j_{l-k}}}$ is the coefficient of $u_{j_1}^{s_{j_1}}\ldots u_{j_{l-k}}^{s_{j_{l-k}}}$ in $P^L_{L_S}$, we have $a_{s_{j_1},s_{j_2},\ldots,s_{j_{l-k}}} = \chi(H_*(A_{\scriptsize{\mbox{\textnormal{\textbf{s}}}}, \varepsilon}^-(L)))$, where $\varepsilon \in E_l$ satisfies $\varepsilon_r = 2$ if $r = j_p$ for some $p$ and $\varepsilon_r = 1$ otherwise.
\end{lem}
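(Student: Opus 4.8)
The plan is to reduce the Euler characteristic of the ``mixed'' complex $A^-_{\mathbf{s},\varepsilon}(L)$ to the link Floer homology of the sublink $M := L\setminus L_S = L_{j_1}\sqcup\cdots\sqcup L_{j_{l-k}}$, and then to read off the coefficient of $P^L_{L_S}$ from Lemma \ref{lem:PL} applied to $M$. First note that, since $S\subsetneq\{1,\ldots,l\}$, the vector $\varepsilon$ has at least one coordinate equal to $2$, so $H_*(A^-_{\mathbf{s},\varepsilon}(L))$ is finite dimensional (by the first Lemma of Section \ref{sec:Homological}, using that $\boldsymbol{A}^-(\mathcal{H},\mathbf{s})$ is basic by Theorem \ref{thm:Aminus}); hence $\chi(H_*(A^-_{\mathbf{s},\varepsilon}(L)))$ is well defined.

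\emph{Step 1 (reduction to the sublink).} The vector $\varepsilon$ equals $1$ exactly on $S$ and $2$ on the complement. By Remark \ref{rem:shift}, once the coordinates $s_{i_1},\ldots,s_{i_k}$ indexed by $S$ are large enough for the chosen diagram, deleting the basepoints $z_{i_1},\ldots,z_{i_k}$ identifies $A^-_{\mathbf{s},\varepsilon}(\mathcal{H})$, as a chain complex, with $A^-_{\mathbf{r},\varepsilon'}(\mathcal{H}')$ for $M$, where $\varepsilon'$ is $\varepsilon$ with the $S$-coordinates deleted and $\mathbf{r}\in\mathbb{H}(M)$ is given by $r_{j_p}=s_{j_p}-\mathrm{lk}(L_{j_p},L_S)/2$. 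Since the deleted coordinates were precisely the $1$'s, we get $\varepsilon'=\boldsymbol{2}$, and therefore $A^-_{\mathbf{r},\boldsymbol{2}}(\mathcal{H}')=\overline{\boldsymbol{A}^-(\mathcal{H}',\mathbf{r})}=CFL^-(M,\mathbf{r})$. Hence $\chi(H_*(A^-_{\mathbf{s},\varepsilon}(L)))=\chi(HFL^-(M,\mathbf{r}))$.

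\emph{Step 2 (the threshold $m(L)_{i_p}$).} The one point that needs care, and which I expect to be the main obstacle, is that Remark \ref{rem:shift} asks the $s_{i_p}$ to exceed a diagram-dependent bound, whereas the hypothesis only gives $s_{i_p}\geq m(L)_{i_p}$, the sharp invariant threshold. This gap is bridged by Proposition \ref{prop:maxs} and the stabilization established in its proof: for $s_{i_p}\geq m(L)_{i_p}$ the edges in the $i_p$-direction are labeled $0$ in $\mathfrak{T}(L)$, i.e. the corresponding inclusions induce isomorphisms on the homology of the relevant $A$-complexes. Consequently $H_*(A^-_{\mathbf{s},\varepsilon}(L))$ is unchanged as each $s_{i_p}$ is increased from $m(L)_{i_p}$ up to the diagram-large range in which Remark \ref{rem:shift} literally applies, so the identification of Step 1 already holds at the stated threshold.

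\emph{Step 3 (identifying the coefficient).} By Lemma \ref{lem:sublink}, $M$ is itself an $L$-space link, so Lemma \ref{lem:PL} gives $P^M_\emptyset=\sum_{\mathbf{t}\in\mathbb{H}(M)}\chi(HFL^-(M,\mathbf{t}))\,u^{\mathbf{t}}$; thus $\chi(HFL^-(M,\mathbf{r}))$ is the coefficient of $u^{\mathbf{r}}$ in $P^M_\emptyset$. Comparing the definitions of $P^L_{L_S}$ and $P^M_\emptyset$ shows, in both the $l-k>1$ and the $l-k=1$ cases, that they differ only by the monomial factor coming from the linking numbers:
\[
P^L_{L_S}=\left(\prod_{p=1}^{l-k}u_{j_p}^{\mathrm{lk}(L_{j_p},L_S)/2}\right)P^M_\emptyset .
\]
Therefore the coefficient of $u_{j_1}^{s_{j_1}}\cdots u_{j_{l-k}}^{s_{j_{l-k}}}$ in $P^L_{L_S}$ equals the coefficient of $\prod_p u_{j_p}^{\,s_{j_p}-\mathrm{lk}(L_{j_p},L_S)/2}=u^{\mathbf{r}}$ in $P^M_\emptyset$, which is $\chi(HFL^-(M,\mathbf{r}))$. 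Combining this with Steps 1--2 yields $a_{s_{j_1},\ldots,s_{j_{l-k}}}=\chi(H_*(A^-_{\mathbf{s},\varepsilon}(L)))$, as claimed. Using Lemma \ref{lem:PL} to identify $P^M_\emptyset$ with the generating function of $\chi(HFL^-(M,\cdot))$ conveniently sidesteps tracking the single-variable normalization of $P^M_\emptyset$ in the $l-k=1$ case.
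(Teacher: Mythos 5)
Your proposal is correct, and its core (Steps 1 and 3) is exactly the paper's argument: the paper's entire proof is the single sentence that the lemma follows from Remark \ref{rem:shift} and Lemma \ref{lem:PL}, i.e. the identification of $A^-_{\mathbf{s},\varepsilon}(L)$ with $CFL^-(L\setminus L_S,\mathbf{r})$ followed by reading off the coefficient, with the linking-number shift absorbed into the definition of $P^L_{L_S}$ exactly as in your displayed identity (and your decision to treat $P^M_\emptyset$ via the generating function of $\chi(HFL^-(M,\cdot))$ is the right reading of the paper's conventions in the $l-k=1$ case). Your Step 2 is content the paper omits entirely, and you are right that it is needed: Remark \ref{rem:shift} literally applies only when the $s_{i_p}$ exceed a diagram-dependent bound, whereas the lemma is stated, and later used in the proof of Theorem \ref{thm:alex} (where $s'_{i_p}=m_{i_p}$ with $m_i$ only required to exceed $m(L)_i+1$), at the invariant threshold $m(L)_{i_p}$.

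One step of your Step 2 is stated too loosely, however. An edge of $\mathfrak{T}(L)$ being labeled $0$ asserts, verbatim, that an inclusion induces an isomorphism between the homologies of complexes $A^-_{\cdot,\delta}$ with $\delta\in\{0,1\}^l$, i.e. corners of the cube of inclusions; it says nothing directly about the mixed complex $A^-_{\mathbf{s},\varepsilon}$, whose coordinates off $S$ equal $2$, so the word ``consequently'' hides a real (if routine) step. To fill it, note that $A^-_{\mathbf{s},\varepsilon}=A^-_{\mathbf{s}+e_{i_p},\tilde\varepsilon}$ where $\tilde\varepsilon$ agrees with $\varepsilon$ except $\tilde\varepsilon_{i_p}=0$, and induct on the number of coordinates equal to $2$: the $i_p$-direction inclusion carries the short exact sequence resolving one $2$-coordinate into the corresponding short exact sequence at $i_p$-coordinate $1$; the two outer vertical maps are isomorphisms on homology by the inductive hypothesis (the base case being exactly the edge labels supplied by Proposition \ref{prop:maxs}), so the five lemma applied to the two long exact sequences gives the isomorphism on the corner with the $2$. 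Alternatively, since only $\chi$ is needed, observe that $A^-_{\mathbf{s},\varepsilon}$ is the total quotient of the basic $(l-k)$-dimensional sub-cube obtained by freezing the $S$-coordinates at $1$; its hypercube graph consists of edges of $\mathfrak{T}(L)$, which by Remark \ref{rem:path} determine $\chi$, and Proposition \ref{prop:maxs}, applied once for each $i_p\in S$ (the recursive definition of $m(L)$ keeps the shifted coordinates above the sublink thresholds), identifies these labels with those of $\mathfrak{T}(L\setminus L_S)$ at $\mathbf{r}$. Either one-sentence repair makes your argument complete.
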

\begin{proof} This follows from Remark \ref{rem:shift} and Lemma \ref{lem:PL}.
\end{proof}

\begin{proof}[Proof of Theorem \ref{thm:alex}] 
We will assume WLOG that $r = 1$. Then let $S = \{i_1,\ldots, i_k\} \subset \{2,\ldots, l\}$ and $\{j_1,\ldots,j_{l-k-1}\} = \{2,\ldots, l\} \char92 S$ with $j_a<j_b$ if $a<b$. \textbf{s} $ = \left(s_1,\ldots, s_l\right)\in \mathbb{H}\left(L\right)$ is arbitrary. Fix $\left(m_1,\ldots, m_l\right)\in \mathbb{H}\left(L\right)$ so that $m_i > m\left(L\right)_i + 1$. Then we have the following:
\[R_{\substack{\scriptsize{\mbox{\textbf{s}}'\geq \mbox{\textbf{s}}}\\s'_1 = s_1}}\left(P^L_{L_S}\right) 
=\sum_{\substack{\scriptsize{\mbox{\textbf{s}}'} = \left(s'_1, \ldots, s'_l\right) \in \mathbb{H}\\s'_1 = s_1,s'_{i_p} = m_{i_p}\\ m_{j_p} \geq s'_{j_p} \geq s_{j_p}}} \chi\left(H_*\left(A_{\scriptsize{\mbox{\textnormal{\textbf{s}}}'}, \rho}^-\right)\right),\]
where $\rho \in E_l$ is fixed and satisfies $\rho_k = 2$ if $k = j_p$ for some $p$, and $\rho_k = 1$ otherwise. This follows by the previous Lemma. We get that the above quantity is equal to:
\begin{equation}\label{eq:alex1}
 \sum_{\substack{\scriptsize{\mbox{\textbf{s}}'} = \left(s'_1, \ldots, s'_l\right) \in \mathbb{H}\\s'_1 = s_1,s'_{i_p} = m_{i_p}\\ m_{j_p} \geq s'_{j_p} \geq s_{j_p}}}\sum_{\substack{\varepsilon \in E_l,\varepsilon_1 = 2, \varepsilon_{i_p} = 1\\ \varepsilon_{j_p} = 1 \scriptsize{\mbox{ or }} 0}}\left(-1\right)^{\scriptsize{\mbox{number of $0$'s in $\varepsilon$}}}\chi\left(H_*\left(A_{\scriptsize{\mbox{\textnormal{\textbf{s}}}'}, \varepsilon}^-\right)\right).
\end{equation}
Note that if $\varepsilon \in E_l$ with $\varepsilon_1 = 2$, $\varepsilon_i = 0$ or $1$ if $i \neq 1$ we get:
\[A_{\scriptsize{\mbox{\textnormal{\textbf{s}}}'}, \varepsilon}^- = A_{\scriptsize{\mbox{\textnormal{\textbf{s}}}}'', \left(2,1,\ldots,1\right)}^-.\]
where \textbf{s}$''$ is given by \textbf{s}$''_1 =$ \textbf{s}$'_1$ and \textbf{s}$''_k = $\textbf{s}$'_k + \varepsilon_k -1$. 
So all of the terms in (\ref{eq:alex1}) that correspond to \textbf{s}$'$ with $s'_i \neq s_i$ or $m_i$ will cancel out. This leaves,
\begin{equation}\label{eq:alex2}
\sum_{\substack{\scriptsize{\mbox{\textbf{s}}'} = \left(s'_1, \ldots, s'_l\right) \in \mathbb{H}\\s'_1 = s_1,s'_{i_p} = m_{i_p}\\ s'_{j_p} =  s_{j_p} \scriptsize{\mbox{ or }} m_{j_p}}}\left(-1\right)^{\scriptsize{\mbox{number of $0$'s in $\nu\left(\mbox{\textbf{s}}'\right)$}}}\chi\left(H_*\left(A_{\scriptsize{\mbox{\textnormal{\textbf{s}}}'}, \nu\left(\mbox{\textbf{s}}'\right)}^-\left(L\right)\right)\right),
\end{equation}
where here $\nu\left(\mbox{\textbf{s}}'\right)_1 = 2, \nu\left(\mbox{\textbf{s}}'\right)_{i_p} = 1$ and $\nu\left(\mbox{\textbf{s}}'\right)_{j_p} = 1$ if $s'_{j_p} = m_{j_p}$, and $\nu\left(\mbox{\textbf{s}}'\right)_{j_p} = 0$ otherwise.\\
Given $S\subset \{2,\ldots, l\}$, we define \textbf{s}$\left(S\right)$ by setting \textbf{s}$\left(S\right)_1 = s_1,$ \textbf{s}$\left(S\right)_k = m_p$ if $p \in S$, and \textbf{s}$\left(S\right)_k = s_p - 1$ otherwise. Then we can rewrite (\ref{eq:alex2}) as
\begin{equation}\label{eq:alex3}
\sum_{S'\subset \{2,\ldots,l\}\char92 S} \left(-1\right)^{l-1-|S|-|S'|}\chi\left(H_*\left(A^-_{\mbox{\scriptsize{\textbf{s}}}\left(S\cup S'\right),\left(2,1,\ldots,1\right)}\right)\right).
\end{equation}
Thus, we finally get:
\begin{eqnarray}
\sum_{S\subset\{2,\ldots,l\}} \left(-1\right)^{l-1-|S|}R_{\substack{\scriptsize{\mbox{\textbf{s}}'\geq \mbox{\textbf{s}}}\\s'_1 = s_1}}\left(P^L_{L_S}\right)
&=&\sum_{S\subset\{2,\ldots,l\}}\sum_{S'\subset \{2,\ldots,l\}\char92 S} \left(-1\right)^{-|S'|}\chi\left(H_*\left(A^-_{\mbox{\scriptsize{\textbf{s}}}\left(S\cup S'\right),\left(2,1,\ldots,1\right)}\right)\right) \notag\\
&=&\sum_{S\subset\{2,\ldots,l\}}\sum_{A\subset S} \left(-1\right)^{-|S\char92 A|}\chi\left(H_*\left(A^-_{\mbox{\scriptsize{\textbf{s}}}\left(S\right),\left(2,1,\ldots,1\right)}\right)\right) \notag\\
&=&\chi\left(H_*\left(A^-_{\mbox{\scriptsize{\textbf{s}}}\left(\emptyset\right),\left(2,1,\ldots,1\right)}\right)\right). \label{eq:fin}
\end{eqnarray}
Now (\ref{eq:fin}) must be either $1$ or $0$ by Theorem \ref{thm:Aminus}.
\end{proof}

\section{Application to $2$-bridge links}\label{sec:Application}
We would like to use the recursive formula for the multivariate Alexander polynomial of a $2$-bridge link given in \cite{KBridge}, so we will use the conventions from that paper. A circle labeled $k$ or $-k$ will represent a braid with $k$ crossings as in Figure \ref{fig:twist}
\begin{figure}[h]
    \centering
    \includegraphics[scale = .6]{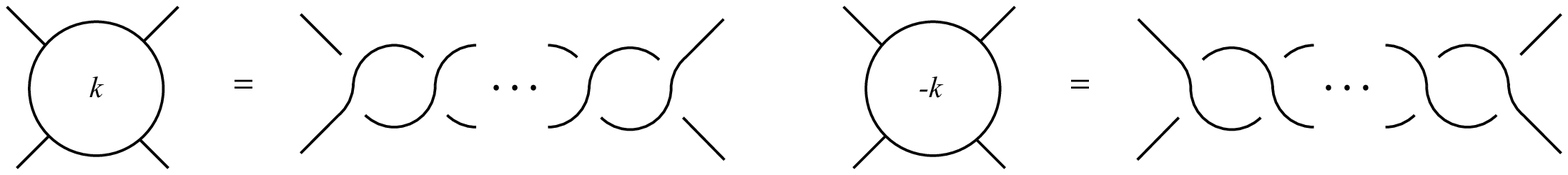}
		\caption{}
		\label{fig:twist}
\end{figure}
Suppose we are given a collection of nonzero integers $a_1,\ldots,a_n$. Then we can define $\alpha$ and $\beta$ via
\begin{equation}\label{eq:cont}
\frac{\alpha}{\beta} = a_1+\cfrac{1}{a_2+\cfrac{1}{\ddots + \cfrac{1}{a_n}}}
\end{equation}
where $\alpha > 0$, $\mbox{g.c.d}(\alpha,\beta) = 1$, and $\alpha > |\beta|>0$. Now, if $\alpha$ is even we can use $(a_1,\ldots,a_n)$ to construct an oriented link $C(a_1,\ldots,a_n)$ as shown in Figure \ref{fig:bridge}.

\begin{figure}[h]
    \centering
    \includegraphics[scale = .6]{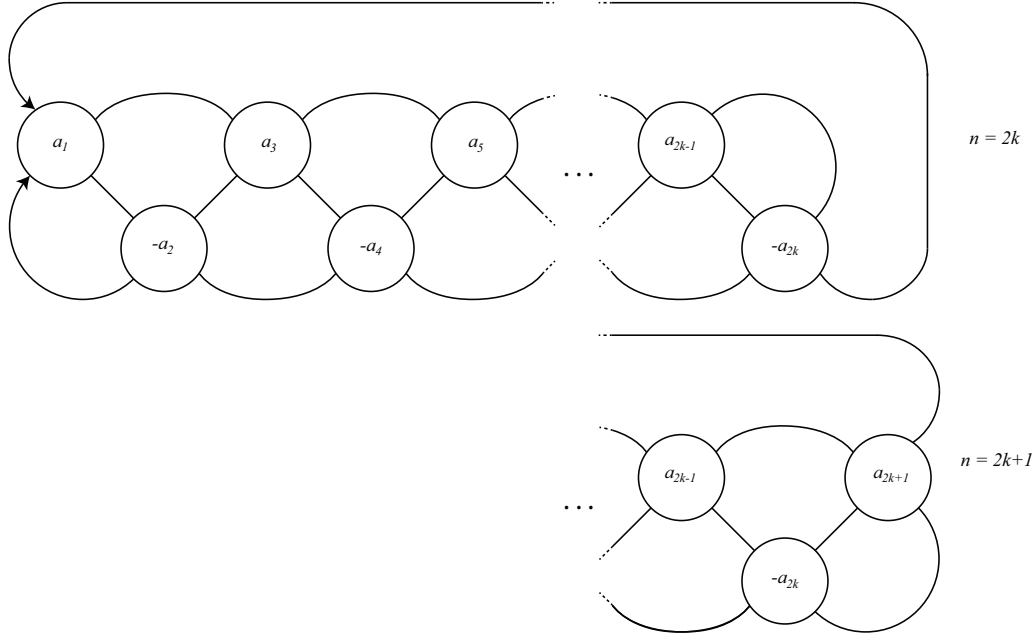}
		\caption{Diagram for constructing $2$-bridge link given a sequence of non-zero integers.}
		\label{fig:bridge}
\end{figure}

Links of this form are called $2$-bridge links, and we have the following classification from \cite{CEnum} and page 144 of \cite{SBridge} (see also chapter $12$ in \cite{GZKnots}):
\begin{thm}\label{thm:class}
If $L = C(a_1,\ldots, a_n)$ and $L' = C(b_1,\ldots, b_m)$ are two $2$ bridge links where we define $\alpha$ and $\beta$ from $a_1,\ldots,a_n$, as in equation \ref{eq:cont}, and similarly $\alpha'$ and $\beta'$ from $b_1,\ldots,b_m$. Then $L$ and $L'$ are equivalent iff $\alpha' = \alpha$ and $\beta' \equiv \beta^{\pm 1} \mod 2\alpha$. If $\beta' \equiv \beta + \alpha \mod 2\alpha$ or $\beta'\beta \equiv 1 + \alpha \mod 2\alpha$, then $L$ and $L'$ are equivalent after reversing the orientation of one of the components. 
\end{thm}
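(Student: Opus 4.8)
The plan is to convert the $L$-space hypothesis into the combinatorial restrictions on the multivariable Alexander polynomial provided by Corollary~\ref{cor:alex2}, to test these against the recursive formula for $2$-bridge Alexander polynomials of \cite{KBridge}, and finally to pin down the pair $(\alpha,\beta)$ and invoke the classification Theorem~\ref{thm:class}. First I would write $L = C(a_1,\dots,a_n)$ with $\alpha$ even, so that $L = L_1 \sqcup L_2$ genuinely has two components, and record the classical fact that each component of a $2$-bridge link is unknotted. Hence $\Delta_{L_1} = \Delta_{L_2} = 1$, and by the defining formulas for the $P^L_{L_S}$ the entire sublink contribution is controlled by the single integer $n := \mbox{lk}(L_1,L_2)$: explicitly $P^L_{L_1}(u_2) = u_2^{n/2}\sum_{i\ge 0} u_2^{-i}$ and symmetrically for $P^L_{L_2}$. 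This is the only sublink data visible to the hypotheses of the theorem, so the argument must ultimately be phrased in terms of $n$.

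Next I would extract the precise constraints. By Corollary~\ref{cor:alex2}, every nonzero coefficient of $P^L_\emptyset = \sqrt{u_1 u_2}\,\Delta_L$ is $\pm 1$; the coefficients of each slice $P^L_\emptyset|_{(r,s'_r)}$ alternate in sign; and the first nonzero coefficient of that slice is $+1$ exactly when the coefficient of $u_r^{s'_r}$ in $P^L_{L_{3-r}}$ equals $1$, i.e.\ exactly when $s'_r \le n/2$, and is $-1$ otherwise. Imposed simultaneously in the $u_1$- and $u_2$-directions, these are a rigid two-directional sign-compatibility requirement on the Newton polygon of $\Delta_L$; I would use them to severely restrict both the support and the sign pattern, so that the remaining task is to decide which $2$-bridge Alexander polynomials can meet them.

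The core of the proof is then to run the \cite{KBridge} recursion by induction on the length $n$ of the continued fraction, relating $\Delta_{C(a_1,\dots,a_n)}$ to the polynomials of its truncations and tracking how each partial quotient $a_i$ affects the coefficients. The statement to establish is that the requirement ``all coefficients $\pm 1$ with strict alternation in both variables'' collapses the continued fraction: any partial quotient too large in absolute value, or any length exceeding two not removable by the reduction moves underlying Theorem~\ref{thm:class}, forces some coefficient of absolute value $\ge 2$ or breaks the alternation, contradicting Corollary~\ref{cor:alex2}. Since $b(qk-1,-k)$ has $\alpha/\beta = (qk-1)/(-k) = -q + 1/k = C(-q,k)$, a continued fraction of length two, the surviving cases are exactly those with $(\alpha,\beta) = (qk-1,-k)$ up to the equivalence of Theorem~\ref{thm:class}; the evenness of $\alpha = qk-1$ together with the parity bookkeeping forced by the alternation conditions makes $q$ and $k$ both odd.

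Finally, having determined $(\alpha,\beta)$ (or the value produced after the single orientation reversal that may intervene, since $\Delta_L$ depends on a choice of orientation while the $L$-space condition does not), I would apply Theorem~\ref{thm:class}: the congruence $\beta' \equiv \beta^{\pm 1} \pmod{2\alpha}$ and the orientation-reversal congruences $\beta' \equiv \beta + \alpha$ and $\beta'\beta \equiv 1 + \alpha \pmod{2\alpha}$ together identify $L$, after possibly reversing one component, with $b(qk-1,-k)$. The main obstacle I anticipate is exactly the inductive combinatorial analysis of the third step: showing cleanly that the two-directional $\pm 1$ and alternation constraints force the continued fraction down to length two demands careful control both of the \cite{KBridge} recursion and of the reduction moves behind Theorem~\ref{thm:class}, and it is there that the sign and parity conventions must be handled most delicately.
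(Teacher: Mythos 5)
Your proposal does not prove the statement in question. Theorem~\ref{thm:class} is the classical classification of $2$-bridge links --- that $C(a_1,\ldots,a_n)$ and $C(b_1,\ldots,b_m)$ are equivalent iff $\alpha=\alpha'$ and $\beta'\equiv\beta^{\pm 1} \bmod 2\alpha$, with the stated congruences governing orientation reversal. This is a purely topological statement about arbitrary $2$-bridge links, with no $L$-space hypothesis anywhere; the paper does not prove it but imports it from \cite{CEnum} and \cite{SBridge}. A proof would have to go through something like Schubert normal form or the identification of the double branched cover of $b(\alpha,\beta)$ with the lens space $L(\alpha,\beta)$, the homeomorphism classification of lens spaces giving $\alpha=\alpha'$ and $\beta'\equiv\beta^{\pm1}\bmod\alpha$, and then a refinement modulo $2\alpha$ that tracks the two components and their orientations. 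Instead, what you sketched is a proof of Theorem~\ref{thm:bridge}, the classification of $2$-bridge \emph{$L$-space} links --- and your sketch explicitly \emph{invokes} Theorem~\ref{thm:class} in its final step. As a proof of Theorem~\ref{thm:class} it is therefore circular, and the Floer-theoretic machinery (Corollary~\ref{cor:alex2}, the \cite{KBridge} recursion) is simply irrelevant to the statement, which makes no reference to $L$-spaces.

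Even read charitably as a sketch of Theorem~\ref{thm:bridge}, the central step is both unproven and overstated. You assert that the $\pm 1$ and two-directional alternation constraints of Corollary~\ref{cor:alex2} ``collapse the continued fraction'' directly to $b(qk-1,-k)$, and you flag this as the anticipated difficulty rather than carrying it out. In the paper this step does not go through on polynomial constraints alone: Lemma~\ref{lem:polyres} only narrows the field to eight (then four) families $b(qk-1,\pm k)$ and $b(q'k+1,\pm k)$, and eliminating three of these requires genuinely new input --- the signature computations of Lemma~\ref{lem:sig} via the Gordon--Litherland formula, together with comparing the Maslov grading of $\widehat{HFL}$ computed from $\mathfrak{T}(L)$ against the grading forced by Theorem~$1.3$ of \cite{OZLinks} for alternating links. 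So the claim that the alternation conditions by themselves single out $b(qk-1,-k)$ is false as stated, independent of the more basic problem that the proposal addresses the wrong theorem.
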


We will denote the $2$-bridge link determined by $\alpha$ and $\beta$ as above by $b(\alpha, \beta)$. To use the formulas in \cite{KBridge}, we need an expansion of $\frac{\alpha}{\beta}$ of the following form:
\[\frac{\alpha}{\beta} = 2p_1+\cfrac{1}{2q_1+\cfrac{1}{2p_2 + \cfrac{1}{2q_2  +\cfrac{1}{\ddots + \cfrac{1}{2p_n}}}}}.\]
We will denote $b(\alpha,\beta) = C(2p_1,2q_1,\ldots,2p_{n-1},2q_{n-1},2p_n)$ by $D(p_1,q_1,p_2,q_2,\ldots,p_n)$ for convenience.

We define two variable polynomials $F_r(u_1,u_2)$ for $r \in \mathbb{Z}$:
\[F_r(u_1,u_2)= 
\left\{
\begin{array}{ll}
\displaystyle\sum_{i = 0}^{r-1} (u_1u_2)^i &\mbox{ if } r>0\\
0 & \mbox{ if } r = 0\\
-\displaystyle\sum_{i=r}^{-1}(u_1u_2)^i & \mbox{ if } r < 0.
\end{array}
\right.\]

Now let us define polynomials $\Delta_k \in \mathbb{Z}[u_1^{\pm},u_2^{\pm}]$ for $0\leq k\leq n$ recursively as follows:
\[\Delta_0 = 0\]
\[\Delta_1 = F_{p_1}\]
\begin{equation}\label{eq:rec}
\Delta_k = (q_{k-1}(u_1-1)(u_2-1)F_{p_k}+1)\Delta_{k-1} + (u_1u_2)^{p_{k-1}}\frac{F_{p_k}}{F_{p_{k-1}}}(\Delta_{k-1} - \Delta_{k-2}).
\end{equation}
Also set $l_k = \sum_{i=1}^k p_i$ and $\tilde{l}_k = \sum_{i=1}^k |p_k|$. 
Then by Theorems $1,3$ and corollary $1$ of \cite{KBridge} we have;
\begin{thm}\label{thm:K} If $L = D(p_1,q_1,p_2,q_2,\ldots,p_k)$, then:
\[(u_1u_2)^{\frac{1-l_k}{2}}\Delta_k(u_1,u_2) = \pm\Delta_L(u_1,u_2).\]
The minimal degree of $u_1$ (or $u_2$) in any monomial of $\Delta_k$ is $\frac{l_k-\tilde{l}_k}{2}$ and the maximal degree of $u_1$ (or $u_2$) in any monomial of $\Delta_k$ is $\frac{l_k+\tilde{l}_k}{2}-1$.
\end{thm}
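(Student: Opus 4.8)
The plan is to handle the two assertions separately, since the first is a direct transcription of Kanenobu's computation while the second requires a genuine induction. For the identity $(u_1u_2)^{(1-l_k)/2}\Delta_k = \pm\Delta_L$, Theorems $1$ and $3$ and Corollary $1$ of \cite{KBridge} already compute the Alexander polynomial of $D(p_1,q_1,\ldots,p_k)$ as $\Delta_k$ up to multiplication by a unit $\pm (u_1u_2)^m$, and $(u_1u_2)^{(1-l_k)/2}$ is precisely the monomial recentering $\Delta_k$ to the symmetric normalization of Definition \ref{def:Apoly}. So here I would only reconcile conventions, confirming that the exponent $l_k$ and the sign are consistent with our normalization; no new idea is needed.

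For the degree bounds I would induct on $k$ directly from the recursion \eqref{eq:rec}. Two facts drive the estimate. First, $\Delta_k(u_1,u_2)=\Delta_k(u_2,u_1)$, because every ingredient of \eqref{eq:rec} is symmetric in $u_1,u_2$; hence the $u_1$- and $u_2$-degree ranges coincide and it suffices to bound the $u_1$-degree. Second, the $u_1$-degree of $F_r$ occupies exactly $[0,r-1]$ when $r>0$ and $[r,-1]$ when $r<0$. The base cases $k=0,1$ are immediate from $\Delta_0=0$ and $\Delta_1=F_{p_1}$, which already realize the asserted minimum $\frac{l_k-\tilde{l}_k}{2}$ and maximum $\frac{l_k+\tilde{l}_k}{2}-1$. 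In the inductive step I would compute the extreme $u_1$-degrees of each of the two summands of \eqref{eq:rec} from the inductive bounds on $\Delta_{k-1},\Delta_{k-2}$ and the degrees of $F_{p_k},F_{p_{k-1}}$, and verify that the extremes add up to the claimed values.

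A clean way to control the ratio term is to carry along, as part of the induction, the divisibility $\Delta_k-\Delta_{k-1}=F_{p_k}G_k$ for a Laurent polynomial $G_k$. Substituting $\frac{F_{p_k}}{F_{p_{k-1}}}(\Delta_{k-1}-\Delta_{k-2})=F_{p_k}G_{k-1}$ into \eqref{eq:rec} and collecting terms shows this holds with $G_k=q_{k-1}(u_1-1)(u_2-1)\Delta_{k-1}+(u_1u_2)^{p_{k-1}}G_{k-1}$ and $G_1=1$. This simultaneously proves the ratio term is an honest Laurent polynomial and telescopes \eqref{eq:rec} to $\Delta_k=\sum_{j=1}^k F_{p_j}G_j$; I would then bound the $u_1$-degrees of $G_k$ from its own recursion, in tandem with those of $\Delta_k$.

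The main obstacle, and the only place where care beyond bookkeeping is needed, is ruling out cancellation of the extremal monomials between the two summands, which could collapse the degree span below the asserted value. I would resolve this with a secondary grading: track not merely the extreme $u_1$-degree but the $u_2$-degree of the monomial realizing it, and check that the top-$u_1$-degree contributions of $(q_{k-1}(u_1-1)(u_2-1)F_{p_k}+1)\Delta_{k-1}$ and of $(u_1u_2)^{p_{k-1}}F_{p_k}G_{k-1}$ sit in distinct $u_2$-degrees. Concretely, the factor $(u_1-1)$ in the first summand forces its top-$u_1$ monomial to a strictly lower $u_2$-degree than the symmetric factor $(u_1u_2)^{p_{k-1}}$ produces in the second, exactly as in the explicit $k=2$ computation, so the two cannot cancel. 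The argument then splits into cases according to the signs of $p_k$ and $p_{k-1}$, and it is this sign bookkeeping, rather than any conceptual difficulty, that makes the proof lengthy.
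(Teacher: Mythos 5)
Your first paragraph matches what the paper actually does: the paper gives no proof of this theorem at all, it is imported wholesale from Theorems 1, 3 and Corollary 1 of \cite{KBridge}, with $(u_1u_2)^{(1-l_k)/2}$ just reconciling normalizations with Definition \ref{def:Apoly}. Your attempt to re-prove the degree statement from the recursion (\ref{eq:rec}) is therefore going beyond the paper, and parts of it are solid: the divisibility lemma $\Delta_k-\Delta_{k-1}=F_{p_k}G_k$ with $G_k=q_{k-1}(u_1-1)(u_2-1)\Delta_{k-1}+(u_1u_2)^{p_{k-1}}G_{k-1}$, $G_1=1$, is correct, cleanly shows the ratio term is a Laurent polynomial, and gives the telescoping $\Delta_k=\sum_j F_{p_j}G_j$; the inductive \emph{upper} bounds on the $u_1$-degree span also go through.

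The genuine gap is in the non-cancellation step, and the mechanism you propose is false as stated. Take $k=2$ with $p_1,p_2>0$: the top-$u_1$ slices of the two summands are $q_1u_1^{p_1+p_2-1}\left(u_2^{p_1+p_2-1}-u_2^{p_1+p_2-2}\right)$ and $u_1^{p_1+p_2-1}u_2^{p_1+p_2-1}$. These do \emph{not} sit in distinct $u_2$-degrees, and for $q_1=-1$ the overlapping monomials really do cancel; the extremal degree survives only because of the extra monomial produced by the $-1$ in $(u_2-1)$. To push this observation through all $k$ you would have to control, inductively, the $u_2$-support (or the value at $u_2=1$) of the top-$u_1$ slices of both $\Delta_j$ and $G_j$; but these auxiliary quantities can degenerate --- e.g.\ the top slice of $G_j$ vanishes at $u_2=1$ as soon as some earlier $p_i<0$, after which one can no longer certify that the two top slices entering $G_{j+1}$ do not cancel entirely --- so the induction as proposed does not close, and this is more than ``sign bookkeeping.'' Your instinct that a secondary grading is needed is right, but the correct one is the diagonal decomposition $P^{[i]}$ that the paper introduces immediately after the theorem: since $F_r$ and $(u_1u_2)^{p_{k-1}}$ are polynomials in $u_1u_2$, the second summand of (\ref{eq:rec}) preserves diagonals and can never reach the top diagonal, while only the $-q_{k-1}u_1F_{p_k}$ piece of the first summand raises it; this yields $\Delta_k^{[k-1]}=q(k)(-u_1)^{k-1}F(k)$ (the paper's next lemma), which is visibly nonzero and realizes the claimed maximal $u_1$-degree exactly, with the minimal degree following from the $u_1\leftrightarrow u_2$ symmetry applied to the bottom diagonal. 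Combined with your upper bounds, that closes the argument with no cancellation analysis at all.
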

Define $q(k) = \prod_{i=1}^{k-1} q_i$ and $F(k) = \prod_{i=1}^k F_{p_i}$ where, as usual, the empty product is $1$. Also recall that the linking number of $D(p_1,q_1,p_2,q_2,\ldots,p_n)$ is $-l_n$.\\

Given any $P \in \mathbb{Z}[u_1^{\pm},u_2^{\pm}]$ where $P = \sum_{r,s \in \mathbb{Z}} a_{r,s}u_1^ru_2^s$, we define $P^{[i]}$ to be the polynomial $\sum_{j \in \mathbb{Z}} a_{j+i,j} u_1^{j+i}u_2^j$. If $P^{[i]} \neq 0$ we say that $P$ is supported on the diagonal $i$. Note that if $Q \in \mathbb{Z}[u_1^{\pm},u_2^{\pm}]$, then $(P+Q)^{[i]} = P^{[i]} + Q^{[i]}$ and $(PQ)^{[i]} = \sum_{a+b = i} P^{[a]}Q^{[b]}$ . Thus, it follows that if $P^{[0]}$ divides $Q$, then $(Q/P^{[0]})^{[k]} = Q^{[k]}/P^{[0]}$. Using equation (\ref{eq:rec}), we get the following identity:
\begin{equation}\label{eq:recd}
\Delta^{[k]}_n  = \sum_{i+j = k}(q_{n-1}(u_1-1)(u_2-1)F_{p_n}+1)^{[i]}\Delta_{n-1}^{[j]} + \left((u_1u_2)^{p_{n-1}}\frac{F_{p_n}}{F_{p_{n-1}}}\right)(\Delta_{n-1} - \Delta_{n-2})^{[k]}.
\end{equation}
This can then be expanded to:
\begin{align}\label{eq:recd2}
\Delta^{[k]}_n &= (q_{n-1}(-u_2)F_{p_n})\Delta_{n-1}^{[k+1]} + (q_{n-1}(-u_1)F_{p_n})\Delta_{n-1}^{[k-1]}+(q_{n-1}(u_1u_2+1)F_{p_n}+1)\Delta_{n-1}^{[k]}\notag\\
&\;\;+ \left((u_1u_2)^{p_{n-1}}\frac{F_{p_n}}{F_{p_{n-1}}}\right)(\Delta_{n-1} - \Delta_{n-2})^{[k]}.
\end{align} 
\begin{lem}
If $t>n-1$ then $\Delta_n$ is not supported on the diagonal $t$. Also:
\[\Delta_n^{[n-1]} = q(n)(-u_1)^{n-1}F(n).\] 
\end{lem}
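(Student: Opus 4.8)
The plan is to prove both assertions simultaneously by strong induction on $n$, reading off the diagonal behaviour of each term in the recursion (\ref{eq:recd2}). The crucial observation is that $F_{p_n}$, the coefficient $q_{n-1}(u_1u_2+1)F_{p_n}+1$, and the factor $(u_1u_2)^{p_{n-1}}F_{p_n}/F_{p_{n-1}}$ are all polynomials in the single product $u_1u_2$; hence each is supported only on the diagonal $0$, and multiplication by any of them carries $P^{[k]}$ to a polynomial still supported on diagonal $k$ (it does not shift the diagonal index). Consequently, in (\ref{eq:recd2}) the only diagonal-shifting contributions are the first term, built from $-u_2$, which feeds $\Delta_{n-1}^{[k+1]}$ into diagonal $k$, and the second term, built from $-u_1$, which feeds $\Delta_{n-1}^{[k-1]}$ into diagonal $k$. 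For the base case $n=1$ we have $\Delta_1=F_{p_1}$, a polynomial in $u_1u_2$, so it is supported only on diagonal $0=n-1$, and $\Delta_1^{[0]}=F_{p_1}=q(1)(-u_1)^0F(1)$ since $q(1)$ is the empty product and $F(1)=F_{p_1}$.

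For the first assertion I would fix $k\geq n$ and show every term of (\ref{eq:recd2}) vanishes. By the inductive hypothesis $\Delta_{n-1}$ is supported only on diagonals $\leq n-2$ and $\Delta_{n-2}$ only on diagonals $\leq n-3$. Then $\Delta_{n-1}^{[k+1]}$, $\Delta_{n-1}^{[k-1]}$, and $\Delta_{n-1}^{[k]}$ all vanish because already $k-1\geq n-1>n-2$, and $(\Delta_{n-1}-\Delta_{n-2})^{[k]}$ vanishes for the same reason; multiplying by the diagonal-$0$ factors keeps these zero. Hence $\Delta_n^{[k]}=0$ for all $k\geq n$, which is exactly the statement that $\Delta_n$ is not supported on any diagonal $t>n-1$.

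For the leading-diagonal formula I would set $k=n-1$ in (\ref{eq:recd2}). The first and third terms die because they involve $\Delta_{n-1}^{[n]}$ and $\Delta_{n-1}^{[n-1]}$, both of which vanish by the first assertion applied to $n-1$; the fourth term dies because $(\Delta_{n-1}-\Delta_{n-2})^{[n-1]}=0$ for the same reason. Only the second term survives, giving $\Delta_n^{[n-1]}=q_{n-1}(-u_1)F_{p_n}\,\Delta_{n-1}^{[n-2]}$. Substituting the inductive formula $\Delta_{n-1}^{[n-2]}=q(n-1)(-u_1)^{n-2}F(n-1)$ and using $q(n)=q_{n-1}q(n-1)$ together with $F(n)=F_{p_n}F(n-1)$ collapses this to $q(n)(-u_1)^{n-1}F(n)$, as claimed. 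I do not expect a serious obstacle here: the only care required is the bookkeeping of which diagonal index each factor contributes to, together with checking the small base cases (for instance $\Delta_0=0$ contributes nothing in the $n=2$ step, so the specific leading formula is never needed for $\Delta_{n-2}$, only its vanishing on high diagonals). The entire argument is driven by the single fact that every coefficient appearing in the recursion other than $-u_1$ and $-u_2$ lives on diagonal $0$.
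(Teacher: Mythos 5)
Your proposal is correct and follows essentially the same route as the paper: induction on $n$ via the diagonal decomposition (\ref{eq:recd2}), using that every coefficient factor other than $-u_1$ and $-u_2$ is a function of $u_1u_2$ (diagonal $0$), so that on the top diagonal only the $(q_{n-1}(-u_1)F_{p_n})\Delta_{n-1}^{[n-2]}$ term survives. The paper merely compresses the bookkeeping you spell out, so there is nothing to add.
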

\begin{proof}
First note that $\Delta_1^{[0]} = \Delta_1 = F_{p_1}$. Now the claim that $\Delta_n^{[t]} = 0$ when $t>n-1$ can be easily seen by induction via equation (\ref{eq:recd2}). We will prove that $\Delta_n^{[n-1]} = q(n)(-u_1)^{n-1}F(n)$ for $n>1$ by induction on $n$ using equation (\ref{eq:recd2}):
\begin{align*}
\Delta_n^{[n-1]} & = (q_{n-1}(-u_1)F_{p_n})\left(\prod_{i=1}^{n-2} q_i\right)(-u_1)^{n-2}\left(\prod_{i=1}^{n-1}F_{p_i}\right)\\
& = q(n)(-u_1)^{n-1}F(n).
\end{align*}
\end{proof}
\begin{lem} For $n\geq 2$:
\[\Delta_n^{[n-2]} = P_1 + P_2 + P_3\]
where:
\begin{align}\label{eq:n-2}
P_1 &= (n-1)(u_1u_2 +1)q(n)F(n)(-u_1)^{n-2}\notag\\
P_2 &= \sum_{i=2}^n \frac{q(n)}{q_{i-1}}\frac{F(n)}{F_{p_i}}(-u_1)^{n-2} \mbox{ and}\notag\\
P_3 &= \sum_{i=1}^{n-1}(u_1u_2)^{p_i}\frac{q(n)}{q_i}\frac{F(n)}{F_{p_i}}(-u_1)^{n-2}.
\end{align}
\end{lem}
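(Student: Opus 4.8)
The plan is to prove the formula by induction on $n$, feeding the diagonal recursion (\ref{eq:recd2}) specialized to $k = n-2$ together with the two facts established in the previous Lemma. For the base case $n=2$, where $(-u_1)^{n-2}=1$, I would compute directly: expanding
\[\Delta_2 = \bigl(q_1(u_1-1)(u_2-1)F_{p_2}+1\bigr)F_{p_1} + (u_1u_2)^{p_1}F_{p_2},\]
and extracting the diagonal-$0$ part (using that $F_{p_1}$, $F_{p_2}$ and $(u_1u_2)^{p_1}$ all lie on the diagonal $0$, while the diagonal-$0$ part of $(u_1-1)(u_2-1)$ is $u_1u_2+1$) gives $q_1(u_1u_2+1)F_{p_1}F_{p_2} + F_{p_1} + (u_1u_2)^{p_1}F_{p_2}$, which is exactly $P_1+P_2+P_3$ for $n=2$.

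For the inductive step I would set $k=n-2$ in (\ref{eq:recd2}). The crucial simplification is that two of the four terms drop out by the vanishing statement of the previous Lemma: since $\Delta_{n-1}$ is not supported above diagonal $n-2$ we have $\Delta_{n-1}^{[n-1]}=0$, killing the first term; and since $\Delta_{n-2}$ is not supported above diagonal $n-3$ we have $\Delta_{n-2}^{[n-2]}=0$, so $(\Delta_{n-1}-\Delta_{n-2})^{[n-2]} = \Delta_{n-1}^{[n-2]}$. This leaves
\[\Delta_n^{[n-2]} = q_{n-1}(-u_1)F_{p_n}\,\Delta_{n-1}^{[n-3]} + \bigl(q_{n-1}(u_1u_2+1)F_{p_n}+1\bigr)\Delta_{n-1}^{[n-2]} + (u_1u_2)^{p_{n-1}}\frac{F_{p_n}}{F_{p_{n-1}}}\Delta_{n-1}^{[n-2]}.\]
Into this I substitute $\Delta_{n-1}^{[n-2]} = q(n-1)(-u_1)^{n-2}F(n-1)$ (the top-diagonal formula of the previous Lemma applied to $n-1$) and the inductive hypothesis $\Delta_{n-1}^{[n-3]} = P_1'+P_2'+P_3'$, where $P_j'$ denotes the asserted expression with $n$ replaced by $n-1$.

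Finally I would simplify using the two multiplicative identities $q(n)=q_{n-1}q(n-1)$ and $F(n)=F_{p_n}F(n-1)$, and match terms. The bookkeeping organizes itself: the pieces proportional to $(u_1u_2+1)q(n)F(n)(-u_1)^{n-2}$ arise from $q_{n-1}(-u_1)F_{p_n}P_1'$ (contributing the factor $n-2$) and from the $(u_1u_2+1)F_{p_n}$ part of the middle term (contributing $1$), summing to $n-1$ and reproducing $P_1$; the ``$+1$'' in the middle term produces the $i=n$ summand of $P_2$ while $q_{n-1}(-u_1)F_{p_n}P_2'$ produces the $i=2,\dots,n-1$ summands; and the last term produces the $i=n-1$ summand of $P_3$ while $q_{n-1}(-u_1)F_{p_n}P_3'$ produces the $i=1,\dots,n-2$ summands. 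The main (and essentially only) obstacle is the bookkeeping: correctly tracking the index shifts in $P_2'$ and $P_3'$ and checking that the coefficient of the $P_1$-term accumulates to exactly $n-1$. No step is analytically deep once the vanishing of the two boundary diagonals is observed.
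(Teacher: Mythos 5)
Your proof is correct and is essentially the paper's own argument: the paper likewise verifies $n=2$ directly and then applies the diagonal recursion at $k=n-2$ (implicitly using the vanishing of $\Delta_{n-1}^{[n-1]}$ and $\Delta_{n-2}^{[n-2]}$ from the previous Lemma), substitutes the top-diagonal formula for $\Delta_{n-1}^{[n-2]}$, and closes by induction. Your write-up simply makes explicit the term-matching that the paper leaves to the reader with ``the result now follows by induction.''
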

\begin{proof} When $n=2$, we directly compute that:
\[\Delta_2 = q_1(u_1-1)(u_2-1)F_{p_2}F_{p_1} + F_{p_1} + (u_1u_2)^{p_1}F_{p_2}.\]
For $n> 2$, we can recursively compute $\Delta_n^{[n-2]}$: 
\begin{align*}
\Delta_n^{[n-2]} &= (q_{n-1}(u_1-1)(u_2-1)F_{p_n}+1)^{[0]}\Delta_{n-1}^{[n-2]} + (q_{n-1}(u_1-1)(u_2-1)F_{p_n}+1)^{[1]}\Delta_{n-1}^{[n-3]}\\
&\;\; +  (u_1u_2)^{p_{n-1}}\frac{F_{p_n}}{F_{p_{n-1}}}(\Delta_{n-1}^{[n-2]})\\
&= (q_{n-1}(u_1u_2+1)F_{p_n}+1)\frac{q(n)}{q_{n-1}}\frac{F(n)}{F_{p_n}}(-u_1)^{n-2} + q_{n-1}(-u_1)F_{p_n}\Delta_{n-1}^{[n-3]}\\
&\;\; + (u_1u_2)^{p_{n-1}}\frac{q(n)}{q_{n-1}}\frac{F(n)}{F_{p_{n-1}}}(-u_1)^{n-2}.
\end{align*}
The result now follows by induction.
\end{proof}
\begin{lem}\label{lem:polyres}
Let $\Delta_n = \sum_{i,j} a_{ij}u_1^iu_2^j$. Suppose that all the nonzero $a_{ij}$ are $\pm 1$. Suppose also that for fixed $i'$ (or $j'$) the nonzero $a_{i'j}$ (or $a_{j'i}$) alternate in sign. Then we must have $|q_i| = 1$ for every $1\leq i\leq n-1$. For the $p_i$, one of the following two possibilities holds:
\begin{itemize}
\item For $i \neq 1$ all $p_i$ are equal.  For $i \neq 1$, $p_i = \pm 1$ and $p_i = -q_{i-1}$
\item For $i \neq n$ all $p_i$ are equal. For $i \neq n$, $p_i = \pm 1$ and $p_i = -q_{i}$.
\end{itemize}
\end{lem}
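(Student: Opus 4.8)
The plan is to read off all the stated constraints from the two preceding Lemmas, which compute the top two diagonals $\Delta_n^{[n-1]}$ and $\Delta_n^{[n-2]}$ explicitly, by feeding in the two hypotheses: that every nonzero $a_{ij}$ equals $\pm 1$, and that the coefficients alternate in sign along each row and column (these are precisely the restrictions an $L$-space link imposes, cf. Corollary \ref{cor:alex2}). The top diagonal will give the statement about the $q_i$ and a crude statement about the $p_i$, while the finer relations will have to be squeezed out of the second diagonal.

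First I would analyze the top diagonal. The preceding Lemma gives $\Delta_n^{[n-1]} = q(n)(-u_1)^{n-1}F(n)$, so every monomial on this diagonal is $u_1^{m+n-1}u_2^m$ with coefficient $q(n)(-1)^{n-1}c_m$, where $c_m$ is the coefficient of $(u_1u_2)^m$ in $F(n)=\prod_{i=1}^{n}F_{p_i}$. The hypothesis that all nonzero coefficients are $\pm 1$ therefore forces $|q(n)|\,|c_m| = 1$ whenever $c_m\neq 0$. Since each $q_i$ is a nonzero integer (the defining word $(2p_1,2q_1,\ldots)$ has nonzero entries) and $q(n)=\prod_{i=1}^{n-1}q_i$, this immediately yields $|q_i|=1$ for every $1\le i\le n-1$, which is the first assertion, and it also forces $|c_m|\le 1$, i.e. $F(n)$ has all coefficients in $\{-1,0,1\}$.

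Next I would use this coefficient bound to control the $p_i$. Up to an overall sign and a monomial factor, each $F_{p_i}$ is the block $1+(u_1u_2)+\cdots+(u_1u_2)^{|p_i|-1}$, which is a unit monomial exactly when $|p_i|=1$ and a genuine polynomial of positive degree when $|p_i|\ge 2$. Since the product of two genuine blocks has a coefficient of absolute value at least $2$, the bound $|c_m|\le 1$ forces at most one index $i$ with $|p_i|\ge 2$; all remaining $p_i$ are $\pm 1$. This already produces the dichotomy "all but one of the $p_i$ equal $\pm 1$," but it locates neither the exceptional index nor the equality and sign relations.

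The remaining and most delicate step is to pin the exceptional $p_i$ to an end ($i=1$ or $i=n$), to show the $\pm 1$-valued $p_i$ share a common value, and to establish $p_i=-q_{i-1}$ (resp. $p_i=-q_i$). For this I would pass to the second diagonal, using $\Delta_n^{[n-2]}=P_1+P_2+P_3$ and the formulas in \eqref{eq:n-2}. Factoring out $q(n)(-u_1)^{n-2}$ rewrites this diagonal as a single Laurent polynomial in $t=u_1u_2$; the summand $P_1$ carries the factor $(n-1)$, so for $n\ge 3$ the requirement that the coefficients again lie in $\{-1,0,1\}$ forces extensive cancellation of $P_1$ against $P_2$ and $P_3$. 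Tracking these cancellations term by term, and comparing them with the sign-alternation read off between diagonals $n-1$ and $n-2$ at a fixed power of $u_2$, forces each $q_{i-1}$ (resp. $q_i$) to equal $-p_i$ and forces the non-exceptional $p_i$ to share a common value, the two cases corresponding to whether the exceptional block occupies the first or the last factor. I expect this bookkeeping of cancellations inside $P_1+P_2+P_3$ to be the main obstacle: the top diagonal yields only $|q_i|=1$ and the coarse "at most one large $p_i$," so every finer relation must come from the subtler second diagonal together with the alternating-sign hypothesis.
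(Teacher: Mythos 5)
Your first half matches the paper's proof: from $\Delta_n^{[n-1]} = q(n)F(n)(-u_1)^{n-1}$ you correctly extract $|q_i|=1$ for all $i$ and that at most one block can have $|p_k|\geq 2$; and in the case where such an exceptional block exists, your plan of tracking coefficients of the normalized second diagonal is exactly what the paper does (its Cases 1 and 2): the coefficient of $u_1u_2$ (or of $(u_1u_2)^{-1}$) in $\Delta_n^{[n-2]}/\bigl(q(n)(-u_1u_2)^{-r}(-u_1)^{n-2}\bigr)$ equals $2(n-1)+\sum_{j\neq k}p_jq_{j-1}+\sum_{j\neq k}p_jq_j$, and forcing this to lie in $\{-1,0,1\}$ pins $k$ to an end and makes all $q_i$ equal and opposite in sign to the non-exceptional $p_i$.

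The genuine gap is your claim that the second diagonal plus sign alternation also settles the remaining case, in which every $p_i$ is $\pm 1$. There $F(n)$ is a single monomial, so $\Delta_n^{[n-2]}$ is supported on at most two monomials, and after normalizing by $q(n)(-u_1u_2)^{-r}(-u_1)^{n-2}$ one computes that \emph{both} of its coefficients equal $(n-1)+\tfrac{q_1p_1+q_{n-1}p_n}{2}+\sum_{i=2}^{n-1}\tfrac{(q_{i-1}+q_i)p_i}{2}$; the second diagonal therefore supplies exactly one scalar constraint, and that constraint does not imply the lemma. Concretely, take $n=4$, $(q_1,q_2,q_3)=(1,-1,-1)$ and $(p_1,p_2,p_3,p_4)=(-1,\pm 1,1,1)$: the middle pair term vanishes, the other two equal $-1$, so the constraint evaluates to $1$, and one checks this configuration is consistent with every condition visible on diagonals $n-1$ and $n-2$ (including alternation between them), yet it satisfies neither bullet of the lemma. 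This is precisely why the paper's Case 3 descends to the \emph{third} diagonal: it uses the symmetries $u_i\mapsto u_i^{-1}$ and $u_1\leftrightarrow u_2$ of the Alexander polynomial, a constraint from \cite{KBridge} excluding one possible shape of $\Delta_n^{[n-3]}$, and the evaluation $\Delta_n^{[n-3]}(-1,1)=\sum_{i=1}^{n-2}\frac{q(n)}{q_iq_{i+1}}-(n-1)q(n)$, which can be $\pm 1$ only if $\sum_{i=1}^{n-2}1/(q_iq_{i+1})=n-2$, i.e. only if all $q_i$ are equal; only after that does the second-diagonal constraint finish the sign relations. (A separate hand computation is also needed when $n=2$ to exclude $(1,1,1)$ and $(-1,-1,-1)$.) So the bookkeeping you defer is not merely tedious: in the all-$p_i=\pm 1$ case it cannot be completed from the data your outline allows itself.
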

\begin{proof}
First note that when $n=1$, the Lemma is vacuously true. So from now on we will assume that $n\geq 2$.
If $\Delta_n$ has all coefficients $\pm 1$ or $0$, then so does $\Delta_n^{[n-1]} = q(n)F(n)(-u_1)^{n-1}$. For this to happen $|q(n)|$ must be $1$ which implies that $q_i = \pm 1$ for every $1\leq i \leq n-1$. $F(n)$ has coefficients $\pm 1$ if for all but possibly one $i$, we have $p_i = \pm 1$.\\
Now we focus on $\Delta_n^{[n-2]}$. There are four cases:
\begin{case}[There is some $k \in \{1,2,\ldots, n\}$ such that $p_k > 1$]

Suppose that $r$ of the $p_i$ are $-1$ (and so except for $p_k$, the rest are $1$.) First, we get that:
\[F(n) = (-1)^r\sum_{i=0}^{p_k-1} (u_1u_2)^{i-r}.\]
Now, since all the nonzero coefficients of $\Delta_n$ are by assumption $\pm 1$, the same must be true for $\frac{\Delta_n^{[n-2]}}{q(n)(-u_1u_2)^{-r}(-u_1)^{n-2}}$. We will compute the coefficient of $u_1u_2$ in $\frac{\Delta_n^{[n-2]}}{q(n)(-u_1u_2)^{-r}(-u_1)^{n-2}}$. Now recall that
\[\Delta_n^{[n-2]} = P_1+P_2+P_3\]
where $P_1$, $P_2$ and $P_3$ are as defined in equation (\ref{eq:n-2}). Set $P'_i := \frac{P_i}{q(n)(-u_1u_2)^{-r}(-u_1)^{n-2}}$ for $i = 1,2$ or $3$. Then,
\[P'_1= (n-1)(u_1u_2 +1)\sum_{i=0}^{p_k-1}(u_1u_2)^i,\]
and so the coefficient of $(u_1u_2)$ in $P'_1$ is $2(n-1)$. Similarly,
\[P'_2 = q_{k-1} + \sum_{\substack{p_j =1\\2\leq j \leq n}} (q_{j-1})\sum_{i=0}^{p_k-1} (u_1u_2)^i + \sum_{\substack{p_j =-1\\2\leq j \leq n}} (-q_{j-1})\sum_{i=1}^{p_k} (u_1u_2)^i.\]
So the coefficient of $(u_1u_2)$ in $P'_2$ is
\[\sum_{\substack{2\leq j \leq n\\ j \neq k}} p_jq_{j-1},\]
and similarly the coefficient of $(u_1u_2)$ in $P'_3$ is 
\[\sum_{\substack{1\leq j \leq n-1\\ j \neq k}} p_jq_{j}.\]
So finally, the coefficient of $u_1u_2$ in $\frac{\Delta_n^{[n-2]}}{q(n)(-u_1u_2)^{-r}(-u_1)^{n-2}}$ is
\begin{equation}\label{eq:rand}
2(n-1) + \sum_{\substack{2\leq j \leq n\\ j \neq k}} p_jq_{j-1} + \sum_{\substack{1\leq j \leq n-1\\ j \neq k}} p_jq_{j},
\end{equation}
which must be $1$, $-1$ or $0$. Notice first that, if $2\leq k \leq n-1$ then the sum 
\[\sum_{\substack{2\leq j \leq n\\ j \neq k}} p_jq_{j-1} + \sum_{\substack{1\leq j \leq n-1\\ j \neq k}} p_jq_{j}\]
is bounded above in absolute value by $2(n-2)$, which makes it impossible for equation (\ref{eq:rand}) to be equal to $1$, $-1$ or $0$. So, we get that $k$ must be $1$ or $n$. If $k$ is $1$ then equation (\ref{eq:rand}) becomes
\[2(n-1) + p_nq_{n-1} + \sum_{j=2}^{n-1} p_j(q_j + q_{j-1}).\]
Notice that the above quantity has smallest possible value $1$ and this only occurs if all of the $q_i$ are equal and have opposite sign as all the $p_{i+1}$, which proves the claim in this case. When $k = n$ the argument is similar.
\end{case}
\begin{case}[There is some $k \in \{1,2,\ldots,n\}$ such that $p_k < -1$]
The argument is the same as in the previous case, except we divide $\Delta_n^{[n-2]}$ by $q(n)(-u_1u_2)^{-r}(-u_1)^{n-2}$ and examine the coefficient of $(u_1u_2)^{-1}$.
\end{case}
\begin{case}[All of the $p_i$ are $\pm 1$ and $n\geq 3$] We will start by showing that all the $q_i$ are equal. Suppose as in the previous cases that the number of $p_i$ that are $-1$ is $r$. In this case $\Delta_n^{[n-1]}$ is the monomial
\[(-1)^{n-1+r}q(n)u_1^{n-1-r}u_2^{-r} \neq 0.\]
This has the maximal possible degree for $u_1$ and minimal possible degree for $u_2$ by Theorem \ref{thm:K}. This immediately forces $\Delta_n^{[n-2]}$ to have at most $2$ nonzero coefficients, and $\Delta_n^{[n-3]}$ to have at most $3$ nonzero coefficients. So $\Delta_n^{[n-2]}$ is of the form 
\[a_{n-2-r,-r}u_1^{n-2-r}u_2^{-r} + a_{n-1-r,1-r}u_1^{n-1-r}u_2^{1-r}\]
Using the symmetry of the Alexander polynomial under the involution $u_i\mapsto u_i^{-1}$, as well as the symmetry given by exchanging $u_1$ and $u_2$ (there is an isotopy of $S^3$ exchanging the two components of a $2$ bridge link which is easy to see using the Schubert normal form \cite{SBridge}); we can conclude that $a_{n-2-r,-r} = a_{n-1-r,1-r}$. Suppose that $a_{n-2-r,-r} = a_{n-1-r,1-r} \neq 0$. Then since we have required the signs of $a_{i,j}$ to be alternating for fixed $i$ (and $j$), this forces one of the following possibilities for $\Delta_n^{[n-3]}$
\[\Delta_n^{[n-3]} = \pm(u_1^{n-3-r}u_2^{-r} + u_1^{n-2-r}u_2^{1-r} + u_1^{n-1-r}u_2^{2-r}) \mbox{ or } \pm(u_1^{n-2-r}u_2^{1-r}) \mbox{ or } 0.\]
We have ruled out $\pm(u_1^{n-3-r}u_2^{-r} + u_1^{n-1-r}u_2^{2-r})$ due to Theorem $3$ (see also definition $2$(iv)) in \cite{KBridge}. In all the possibilities for $\Delta_n^{[n-3]}$, we have 
\[\Delta_n^{[n-3]}(-1,1) = \pm 1 \mbox{ or } 0.\]
$F_{p_n}(-1,1)$ is always $1$ since we have assumed $p_n = \pm 1$. From this we conclude
\[\Delta_n^{[n-1]}(-1,1) = q(n) \mbox{ and } \Delta_n^{[n-2]}(-1,1) = 0.\]
Using this in the recursive formula for $\Delta_n^{[n-3]}$ given in equation (\ref{eq:recd2}), we get
\[\Delta_n^{[n-3]}(-1,1) = -q(n) + q(n-2) + q_{n-1}\Delta^{[n-4]}_{n-1}(-1,1).\]
We manually compute $\Delta_3^{[0]} = 1 - 2q_1q_2$. So this gives the formula
\[\Delta_n^{[n-3]}(-1,1) = \sum_{i=1}^{n-2} \frac{q(n)}{q_iq_{i+1}} -(n-1)q(n).\]
If the above sum is to equal $\pm 1$ (note that it cannot be $0$), we must have 
\[\sum_{i=1}^{n-2} \frac{1}{q_iq_{i+1}} = n-2,\]
and this can only happen if all the $q_i$ are equal.\\
Now suppose that $a_{n-2-r,-r} = a_{n-1-r,1-r} = 0$. The constant term of $\frac{\Delta_n^{[n-2]}}{q(n)(-u_1u_2)^{-r}(-u_1)^{n-2}}$ is:
\begin{equation}\label{eq:piqi}
(n-1) + \sum_{\substack{2\leq i\leq n\\p_i = 1}} q_{i-1} + \sum_{\substack{1\leq i\leq n-1\\p_i = -1}} (-q_i),
\end{equation}
which by our assumption must be $0$. We can rewrite (\ref{eq:piqi}) as;
 \begin{equation}\label{eq:piqi2}
(n-1) + \frac{q_{n-1}p_n+q_{n-1}}{2} + \frac{q_1p_1-q_1}{2}+ \sum_{\substack{2\leq i\leq n-1}} \frac{q_{i-1}p_i+q_ip_i+q_{i-1}-q_i}{2},
\end{equation}
which simplifies to
 \begin{equation}\label{eq:piqi3}
(n-1) + \frac{q_{n-1}p_n+q_1p_1}{2}+ \sum_{\substack{2\leq i\leq n-1}} \frac{q_{i-1}p_i+q_ip_i}{2}.
\end{equation}
Note that
\begin{equation}\label{eq:piqi4}
\frac{q_{n-1}p_n+q_1p_1}{2}+ \sum_{\substack{2\leq i\leq n-1}} \frac{q_{i-1}p_i+q_ip_i}{2}
\end{equation}
has a maximum absolute value of $n-1$ which can only happen if all the $q_i$ are equal (and have opposite sign as all the $p_i$).\\
So we have shown in all cases that all the $q_i$ are equal. This allows us to rewrite equation \ref{eq:piqi3} (which is the constant term of $\frac{\Delta_n^{[n-2]}}{q(n)(-u_1u_2)^{-r}(-u_1)^{n-2}}$) as; 
\begin{equation}\label{eq:piqi5}
(n-1) + \sum_{i=2}^{n-1} q_1p_i + q_1\left(\frac{p_1+p_n}{2}\right).
\end{equation}
We must have (\ref{eq:piqi5}) equal to $\pm 1$ or $0$. First note that we cannot have $q_1\frac{p_1+p_n}{2} = 1$ since $\sum_{i=2}^{n-1} q_1p_i$ is bounded above in absolute value by $n-2$. So we must have that $q_1\frac{p_1+p_n}{2}= -1$ or $0$. If $q_1\frac{p_1+p_n}{2}= 0$ then $\sum_{i=2}^{n-1} q_1p_i$ must be $-n+2$ which implies that all the $p_i$ for $2\leq i\leq n-1$ have the opposite sign as $q_1$ and since $q_1\frac{p_1+p_n}{2}= 0$ we get that one of $p_1$ and $p_n$ must also have the opposite sign as $q_1$ which proves the claim in this case. If we assume that $q_1\frac{p_1+p_n}{2}= -1$ then we need $\sum_{i=2}^{n-1} q_1p_i \leq 3-n$. However $\sum_{i=2}^{n-1} q_1p_i = 3-n$ is impossible since changing the $p_i$ always changes the sum $\sum_{i=2}^{n-1} q_1p_i$ by a multiple of $2$. Thus we once again have that $\sum_{i=2}^{n-1} q_1p_i = 2-n$. This along with the fact that  $q_1\frac{p_1+p_n}{2}= -1$ implies that all of the $p_i$ have the opposite sign as $q_1$.
\end{case}
\begin{case}[$n=2$ and all the $p_i$ are $\pm 1$]
The only tuples $(p_1,q_1,p_2)$ that do not satisfy the condition given in the Lemma are $(1,1,1)$ and $(-1,-1,-1)$, and we can manually compute $\Delta_2$ in both these cases to check that they do not satisfy that all of the nonzero coefficients are $\pm 1$. In particular for $(1,1,1)$ we have $\Delta_2 = 2-u_1-u_2 + 2u_1u_2$ and for $(-1,-1,-1)$ we have $\Delta_2 = -\frac{2}{u_1^2 u_2^2}+\frac{1}{u_1^2 u_2}+\frac{1}{u_1 u_2^2}-\frac{2}{u_1 u_2}$
\end{case}
\end{proof}

Now, if an oriented $2$-bridge link $L$ is an $L$-space link, it must satisfy the conditions of the Lemma \ref{lem:polyres} by corollary \ref{cor:alex2} and so if $L = D(p_1,q_1,\ldots,p_{n-1},q_{n-1},p_n)$, then we have narrowed things down to the following $8$ possibilities where $w >0$ is an integer, $q:= 2w+1, q':= 2w-1$ and $k:= 2n-1$.
\begin{align*}
L &= D(-1,1,\ldots,-1,1,w) = b(qk-1,q-(qk-1)) &\mbox{ or}\\
&=D(-1,1,\ldots,-1,1,-w) = b(q'k+1,q'-(q'k+1)) &\mbox{ or}\\
&=D(1,-1,\ldots,1,-1,w) = b(q'k+1,q'k+1-q') &\mbox{ or}\\
&=D(1,-1,\ldots,1,-1,-w) = b(qk-1,qk-1-q) &\mbox{ or}\\
&=D(w,-1,1,\ldots,-1,1) = b(q'k+1,k) &\mbox{ or}\\
&=D(-w,-1,1,\ldots,-1,1) = b(qk-1,-k) &\mbox{ or}\\
&=D(w,1,-1,\ldots,1,-1) = b(qk-1,k) &\mbox{ or}\\
&=D(-w,1,-1,\ldots,1,-1) = b(q'k+1,-k). &
\end{align*}
We can further reduce these $8$ possibilities down to $4$ by noting $b(qk-1,\pm k) = b(qk-1, \pm(q-(qk-1)))$ which can be seen by rotating the diagram given by \ref{fig:bridge} by $180^{\circ}$, and similarly $b(q'k+1,\pm k) = b(q'k+1, \pm(q'k+1-q'))$. Now we compute the signatures of these four possibilities.
\begin{lem}\label{lem:sig}
When $q$,$q'$ and $k$ are odd positive integers and $q \neq 1$ if $k = 1$;
\begin{align}
\sigma(b(qk-1,\pm k)) &= \pm(q-2)\\
\sigma(b(q'k+1,\pm k)) &= \pm q'.
\end{align}
\end{lem}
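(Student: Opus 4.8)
The plan is to compute these signatures from the standard lattice-point formula for the ordinary signature of a $2$-bridge link,
\[\sigma(b(\alpha,\beta)) = -\sum_{j=1}^{\alpha-1}(-1)^{\lfloor j\beta/\alpha\rfloor},\]
valid for $0<\beta<\alpha$ with $\gcd(\alpha,\beta)=1$; this can be extracted from the Goeritz form of the reduced alternating diagram of $b(\alpha,\beta)$ together with the Gordon--Litherland correction, and I would either cite it or re-derive it in this normalization (sanity-checking it against the Hopf link, trefoil, figure-eight and $(2,\alpha)$ torus links). Since reversing the orientation of one component, equivalently passing to the mirror $b(\alpha,-\beta)$, negates the signature, the two signs appearing in each line of the Lemma correspond exactly to the orientation/mirror ambiguity recorded in Theorem \ref{thm:class}. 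Hence it suffices to evaluate the formula once for $b(qk-1,k)$ and once for $b(q'k+1,k)$ and then read off the opposite sign for free. Both $\alpha=qk-1$ and $\alpha=q'k+1$ are coprime to $k$ and satisfy $0<k<\alpha$ once the hypothesis (which excludes the degenerate $\alpha=0$, i.e.\ $q=1$, $k=1$) is in force, so the formula applies.

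The main step is then to evaluate $S(\alpha)=\sum_{j=1}^{\alpha-1}(-1)^{\lfloor jk/\alpha\rfloor}$ for the two values of $\alpha$. The key observation is that $\lfloor jk/\alpha\rfloor$ is nondecreasing in $j$ and takes each value $t\in\{0,1,\dots,k-1\}$ on a run of consecutive integers $j$, throughout which the sign $(-1)^t$ is constant; thus $S(\alpha)=\sum_{t=0}^{k-1}(-1)^t\ell_t$, where $\ell_t$ is the number of $j$ with $\lfloor jk/\alpha\rfloor=t$. I would compute each $\ell_t$ by counting the integers in the real interval $j\in[t\alpha/k,(t+1)\alpha/k)$, which has length $\alpha/k$. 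For $\alpha=q'k+1$ one has $\alpha/k=q'+1/k$ with all interval offsets $t/k$ lying in $(0,1)$, so every run contains exactly $q'$ integers; since $k$ is odd, $S(q'k+1)=q'\sum_{t=0}^{k-1}(-1)^t=q'$, giving $|\sigma(b(q'k+1,k))|=q'$. For $\alpha=qk-1$ one has $\alpha/k=q-1/k$, and the same bookkeeping shows that the two end runs $t=0$ and $t=k-1$ contain $q-1$ integers while the $k-2$ middle runs contain $q$; with $k$ odd this yields $S(qk-1)=2(q-1)-q=q-2$, hence $|\sigma(b(qk-1,k))|=q-2$.

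The main obstacle is twofold. First, I must fix the signature formula in precisely the orientation and sign convention used for $b(\alpha,\beta)$ throughout the paper: because $\alpha$ is even here these are genuine two-component links, so the signature depends on the relative orientation of the components, and the reduction to the two ``positive'' representatives must be matched against the $k=1$ base cases, where $b(q-1,\pm1)$ and $b(q'+1,\pm1)$ are the $(2,\cdot)$ torus links of known signature; this single comparison pins down the one global sign. Second, the delicate part of the counting is the endpoint runs $\ell_0$ and $\ell_{k-1}$, where the exclusion of $j=0$ and $j=\alpha$ together with the offsets at the interval boundaries produce exactly the $\pm1$ discrepancies responsible for the ``$-2$'' in $q-2$ as opposed to the clean $q'$ in the other family. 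Once these are handled, the remainder is a routine evaluation of the floor sum.
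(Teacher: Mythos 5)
Your route is genuinely different from the paper's. The paper never invokes a general signature formula: it writes down the Goeritz matrix of the explicit two--twist--region diagram coming from $\alpha/\beta = q'+\tfrac1k$ (resp.\ $q-\tfrac1k$), kills the Gordon--Litherland correction term because the checkerboard surface is orientable, and then diagonalizes the tridiagonal matrix $A_n(p)$ by an elementary induction. You instead outsource the geometry to a general lattice-point formula and reduce the Lemma to a floor-sum count. Your count itself is correct: for $\alpha=q'k+1$ every run of constant $\lfloor jk/\alpha\rfloor$ has length $q'$, for $\alpha=qk-1$ the end runs have length $q-1$ and the $k-2$ middle runs have length $q$, and with $k$ odd the alternating sums are $q'$ and $q-2$, which agree with the paper's answers up to one overall sign. (In the paper's diagram conventions the answers are \emph{positive}, $\sigma(b(q'k+1,k))=+q'$ and $\sigma(b(qk-1,k))=+(q-2)$, i.e.\ opposite to the normalization in which your formula carries the minus sign; so the calibration step you describe is not optional, and it is legitimate --- it uses the figures exactly as the paper's own proof does.)

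However, two of your supporting claims are false as stated. First, the formula $\sigma(b(\alpha,\beta))=-\sum_{j=1}^{\alpha-1}(-1)^{\lfloor j\beta/\alpha\rfloor}$ is \emph{not} valid for all coprime $0<\beta<\alpha$; it requires $\beta$ odd. Concretely, $b(7,2)$ is the knot $5_2$, with $|\sigma|=2$, but your sum vanishes there, while $b(7,3)$ (the same knot up to mirroring and orientation) yields $2$ --- so the unrestricted formula even contradicts the mirror-antisymmetry your own argument uses. Your proposed sanity checks would not detect this, since the figure-eight knot, as $b(5,2)$, has signature $0$ and accidentally passes. Because $\beta=k$ is odd in every application here, the gap is repairable by adding the hypothesis, but note that any honest re-derivation of the odd-$\beta$ formula via Goeritz/Gordon--Litherland is essentially the paper's computation in greater generality, so you have not avoided that work, only relocated it. Second, your claim that $0<k<\alpha$ holds ``once the hypothesis is in force'' is wrong: the Lemma permits $q=1$ with $k\geq 3$, and then $\alpha=qk-1=k-1<k$, so the formula cannot be applied to $(\alpha,\beta)=(k-1,k)$ directly; you must first reduce modulo $2\alpha$, e.g.\ $b(k-1,\pm k)=b(k-1,\mp(k-2))$, after which the count does give $\mp 1=\pm(q-2)$. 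A smaller point: ``reversing the orientation of one component, equivalently passing to the mirror'' conflates two different operations --- component reversal preserves the signature of a knot and changes that of a two-component link by a linking-number term, neither of which is negation. All you need, and all that is true, is that $b(\alpha,-\beta)$ is the mirror image of $b(\alpha,\beta)$ and that mirroring negates $\sigma$; that alone justifies the $\pm$ in the statement, exactly as in the paper's proof.
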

\begin{proof}
First we compute the signature of $b(q'k+1,k)$. Since $\frac{q'k+1}{k} = q' +\frac{1}{k}$, we can use Figure \ref{fig:bridge} to give a diagram $D$ for $b(qk-1,k)$. Now we will use the Gordon-Litherland formula for knot signature(see \cite{GLSig}) on $D$. Since the surface given by a checkerboard coloring of $D$ is orientable, the signature of the link is simply the signature of the Goeritz matrix for $D$ (see the end of the first page in \cite{GLSig}). We denote by $A_n(p)$ the $n \times n$ matrix with $A_{11} = p$, $A_{ii} = 2$ when $2\leq i \leq n$, $A_{ij} = -1$ when $|j-i| = 1$ and $0$ everywhere else. A Goeritz matrix for $D$ is given by $A_q(1+k)$. We claim that if $p>1$, $A_n(p)$ has signature $n$. This is easy to see inductively; let $B(p) = \begin{pmatrix}1&0\\ \frac{1}{p}&1\end{pmatrix}$, $I_n$ denote the $n\times n$ identity matrix and $B_n(p) = \begin{pmatrix}B(p)&0\\0&I_{n-2}\end{pmatrix}$. Then 
\[B_n(p)A_n(p)B_n(p)^T = \begin{pmatrix}p&0\\0&A_{n-1}(2-1/p)\end{pmatrix}\]
so $\sigma(A_n(p)) = 1+\sigma(A_n(2-1/p))$ and the claim follows. So the signature of $b(q'k+1,k)$ is $q'$. Since $b(q'k+1,-k)$ is the mirror image of $b(q'k+1,k)$, the signature of $b(q'k+1,-k)$ is $-q'$.\\
Now we consider $b(qk-1,k)$ where $k > 1$ ($k=1$ has already been covered above). $\frac{qk-1}{k} = q-\frac{1}{k}$. In this case a Goeritz matrix is $A_q(1-k)$ and 
\[B_q(1-k)A_q(1-k)B_q(1-k)^T = \begin{pmatrix}1-k&0\\0&A_{q-1}(2-1/(1-k))\end{pmatrix}.\]
Now $1-k<0$ and $2-1/(1-k)>1$, so $\sigma(A_n(1-k)) = -1+ \sigma(A_{q-1}(2-1/(1-k))) = q-2$. Since $b(qk-1,-k)$ is the mirror image of $b(qk-1,k)$, $\sigma(b(qk-1,-k)) = -q+2$ as desired. 
\end{proof}

\begin{prop}
If $L$ is an $L$-space link of the form $b(qk-1,k) = D(-1,1,\ldots,-1,1,w)$ then $L = b(2,1)$ 
\end{prop}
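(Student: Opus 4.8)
The plan is to exploit the observation, recorded in the proof of Lemma \ref{lem:sig}, that $b(qk-1,k)$ is the mirror image of $b(qk-1,-k)$, the latter being an $L$-space link by the work of Liu. Mirroring fixes the (symmetric) multivariable Alexander polynomial, so $P^L_\emptyset$—and with it every constraint coming from Theorem \ref{thm:main} and Corollary \ref{cor:alex2}—is identical for $b(qk-1,k)$ and $b(qk-1,-k)$, except insofar as these constraints see the pairwise linking number, which is negated under mirroring. In particular, whenever the linking number vanishes (that is, when $n-1=w$) the two links share all of their $HFL^-$ data, so the $L$-space condition cannot be detected from link Floer homology alone. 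The discriminating invariant must therefore be chirality-sensitive, and the natural candidate—already computed in Lemma \ref{lem:sig}—is the signature.

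Concretely, I would use that for an $L$-space link the signature is controlled by the same Alexander-polynomial-plus-linking data that Theorem \ref{thm:main} shows determines link Floer homology; in particular, two links with identical such data (for instance a link and its mirror when the linking number is $0$) must have the same signature if both are $L$-space links. Lemma \ref{lem:sig} gives $\sigma(b(qk-1,k))=q-2\ge 1$ while $\sigma(b(qk-1,-k))=2-q\le -1$ (recall $q=2w+1$ with $w\ge1$), so these disagree whenever $q\ge 3$. Running this comparison, the hypothesis that $b(qk-1,k)$ is an $L$-space link forces the linking number to be nonzero, and, after tracking how the sign of the linking number feeds into the leading-coefficient prescription of Corollary \ref{cor:alex2}, to be as small as possible in absolute value; this collapses the parameters to $n=1$, $w=1$.

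It then remains to treat the subfamily $n=1$, where $L=D(w)=b(2w,1)$ is a $(2,2w)$-torus link and Lemma \ref{lem:polyres} is vacuous; here I would again invoke $\sigma(b(2w,1))=2w-1$ and compare it against the value forced by the $L$-space structure, ruling out $w\ge 2$ and leaving exactly $L=b(2,1)$. The main obstacle I anticipate is making this signature obstruction precise: one must pin down exactly how the signature of an $L$-space link is determined by, or bounded in terms of, its Alexander polynomial and linking number, and then verify that the ``wrong-sign'' signature $q-2$ produced by Lemma \ref{lem:sig} is compatible with that constraint only in the boundary case $q=3$, $k=1$. This is precisely the case where, as in the discussion following Corollary \ref{cor:alex2} and in the statement of Theorem \ref{thm:bridge}, reversing the orientation of a component identifies $b(2,1)$ with $b(2,-1)=b(qk-1,-k)$, so that both chiralities legitimately survive.
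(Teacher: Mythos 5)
Your identification of the signature as the discriminating invariant is correct, and your mirror-symmetry framing of the zero-linking-number case is a legitimate repackaging of what the paper actually does there. But the step you label ``the main obstacle'' --- establishing that the signature of an $L$-space link is pinned down by its Alexander polynomials and linking numbers --- is not a detail to be deferred; it is the entire content of the proof, and your proposal contains no mechanism for it. The paper's mechanism is the alternating structure of $2$-bridge links: by Theorem $1.3$ of \cite{OZLinks}, whenever $\widehat{HFL}(L,\mbox{\textbf{s}})$ is nonzero it sits in Maslov grading $s_1+s_2+\frac{\sigma-1}{2}$. The $L$-space hypothesis supplies a second, independent computation of that grading: one computes $P^L_\emptyset$ explicitly from the recursion (\ref{eq:rec}), fixes its sign by Corollary \ref{cor:alex2}, reconstructs the relevant portion of $\mathfrak{T}(L)$, and applies Lemma \ref{lem:hat} at a corner Spin$^c$ structure to obtain $\widehat{HFL}\left(L,\left(\frac{w}{2},\frac{w}{2}\right)\right)\cong\mathbb{F}_{(1)}$ when $n=1$, and $\widehat{HFL}(L,(1,n-1))\cong\mathbb{F}_{(1)}$ when $n>1$. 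Comparing with the alternating formula and $\sigma=q-2$ from Lemma \ref{lem:sig} gives $w=1$ in the first case and the impossible equation $1=2n-2$ in the second. Without this chain (alternating formula, plus Lemma \ref{lem:hat}, plus the explicit $\mathfrak{T}(L)$ computation), ``compare $\sigma$ against the value forced by the $L$-space structure'' has no content: Theorem \ref{thm:main} alone says nothing about $\sigma$, and $HFL^-$ sees chirality only through this grading bridge. Your mirror argument can be made precise along these lines (e.g.\ apply Lemma \ref{lem:hat} at a Spin$^c$ structure maximizing $s_1+s_2$ in the support of $HFL^-$, then read $\sigma$ off the grading), but you have not done so.

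There is also a structural error in how you dispose of the remaining cases. Corollary \ref{cor:alex2} does not force the linking number to be nonzero or ``as small as possible in absolute value''; what it actually does (and this is a computation your proposal omits) is force the linking number to \emph{vanish} whenever $n>1$: the leading coefficients of $P^L_\emptyset|_{(1,j)}$ change sign exactly at $j=\frac{w-n+1}{2}$, while $P^L_{L_2}$ is supported exactly in degrees $j\le\frac{n-w-1}{2}$, and Corollary \ref{cor:alex2} makes these compatible only if $w=n-1$. Conversely, when $n=1$ Corollary \ref{cor:alex2} is satisfied for every $w$, so it cannot collapse $w$ to $1$; only the grading comparison can. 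The correct architecture is therefore: Corollary \ref{cor:alex2} reduces $n>1$ to $\mathrm{lk}=0$; the signature/grading argument (your mirror argument, once made precise) kills $\mathrm{lk}=0$; and the grading argument again handles $n=1$. Your sketch inverts these roles and leaves both load-bearing computations unperformed.
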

\begin{proof}
Let us assume that $L = b(qk-1,k)$ is an $L$-space link. Now if $s < n$, it is easy to see by induction that
\[\Delta_s(u_1,u_2) = -\frac{1}{u_1^su_2^s}\left(\sum_{i=0}^{s-1} u_1^iu_2^{s-1-i}\right).\]
So by equation (\ref{eq:rec}) we get 
\begin{multline*}
\Delta_n(u_1,u_2) = \left((u_1-1)(u_2-1)\left(\sum_{i=0}^{w-1}(u_1u_2)^i\right)+1\right)\left(-\frac{1}{u_1^{n-1}u_2^{n-1}}\left(\sum_{i=0}^{n-2} u_1^iu_2^{n-2-i}\right)\right) +\\
\frac{(u_1u_2)^{-1}}{-(u_1u_2)^{-1}}\left(\sum_{i=0}^{w-1}(u_1u_2)^i\right)\left(\left(-\frac{1}{u_1^{n-1}u_2^{n-1}}\left(\sum_{i=0}^{n-2} u_1^iu_2^{n-2-i}\right)\right)-\left(-\frac{1}{u_1^{n-2}u_2^{n-2}}\left(\sum_{i=0}^{n-3} u_1^iu_2^{n-3-i}\right)\right)\right).
\end{multline*}
This simplifies to
\[\Delta_n(u_1,u_2) = \sum_{\substack{0\leq i \leq w-1\\0\leq j\leq n-1}} u_1^{i+j+1-n}u_2^{i-j} - \sum_{\substack{0 \leq i \leq w\\0 \leq j \leq n-2}}u_1^{i+j+1-n}u_2^{i-j-1}.\]
Now note that $L = L_1 \sqcup L_2$, where both $L_1$ and $L_2$ are unknots and lk$(L_1,L_2) = -l_n = -w+n-1$, so we get:
\[P^L_{L_1}(u_2) = (u_2)^{\frac{n-w-1}{2}} \sum_{i=0}^\infty (u_2)^{-i} \mbox{ and } P^L_{L_2}(u_1) = (u_1)^{\frac{n-w-1}{2}} \sum_{i=0}^\infty (u_1)^{-i}.\]
Finally, by Theorem \ref{thm:K} we also get
\[P^L_\emptyset = \pm(u_1u_2)^{\frac{n-w+1}{2}}\Delta_n(u_1,u_2).\]
Expanding this then gives
\[\pm P^L_\emptyset = (u_1u_2)^{\frac{n-w+1}{2}}\Delta_n(u_1,u_2) = \sum_{\substack{0\leq i \leq w-1\\0\leq j\leq n-1}} u_1^{i+j+\frac{3-n-w}{2}}u_2^{i-j+\frac{n-w+1}{2}} - \sum_{\substack{0 \leq i \leq w\\0 \leq j \leq n-2}}u_1^{i+j+\frac{3-n-w}{2}}u_2^{i-j+\frac{n-w-1}{2}}.\]
If $n = 1$, we get:
\[\pm P^L_\emptyset = \sum_{0\leq i \leq w-1} u_1^{i+1\frac{-w}{2}}u_2^{i+1\frac{-w}{2}}.\]
We can then fix the sign for $P^L_\emptyset$ using corollary \ref{cor:alex2} to get
\[P^L_\emptyset = -\sum_{0\leq i \leq w-1} u_1^{i+1\frac{-w}{2}}u_2^{i+1\frac{-w}{2}}.\]
Then, using the method given in the proof of Theorem \ref{thm:main}, we can compute $\mathfrak{T}(L)$. In this case $m(L) = (w/2,w/2)$. The edge between $(s_1,w/2-1)$ and $(s_1,w/2)$ is labeled with $0$ whenever $s_1\geq w/2$. Similarly, the edge between $(w/2-1,s_2)$ and $(w/2,s_2)$ is labeled $0$ whenever $s_2\geq w/2$. The coefficient of $u_1^{w/2}u_2^{w/2}$ in $P^L_\emptyset$ is $-1$, which forces both edges from $(w/2-1,w/2-1)$ to be labeled with $1$. This along with Lemma \ref{lem:hat} allows us to compute
\begin{equation}\label{eq:a1}
\widehat{HFL}\left(L,\left(\frac{w}{2},\frac{w}{2}\right)\right)\cong \mathbb{F}_{(1)}.
\end{equation}
Now, recall that when $L$ is alternating, $\widehat{HFL}(L,\mbox{\textbf{s}})$ is completely determined by its Euler characteristic and $\sigma(L)$, using Theorem $1.3$ in \cite{OZLinks}. Specifically, if \textbf{s} $=(s_1,s_2)$ and $a_{\mbox{\scriptsize{\textbf{s}}}}$ is the coefficient of $u^{\mbox{\scriptsize{\textbf{s}}}}$ in $(1-u_1^{-1})(1-u_2^{-1})P^L_\emptyset$ then 
\[\widehat{HFL}(L,\textbf{s}) \cong \mathbb{F}^{|a_{\mbox{\scriptsize{\textbf{s}}}}|}_{s_1+s_2+\frac{\sigma -1}{2}}.\]
Therefore
\begin{equation}\label{eq:a2}
\widehat{HFL}\left(L,\left(\frac{w}{2},\frac{w}{2}\right)\right) \cong \mathbb{F}_{(2w-1)}
\end{equation}
by Lemma \ref{lem:sig}. Combining equations (\ref{eq:a1}) and (\ref{eq:a2}) gives $w = 1$, which along with $n = 1$, gives that $L = b(2,1)$.\\

If $n \neq 1$, the leading coefficient of $P^L_{\emptyset}|_{(1,j)}$ and $P^L_{\emptyset}|_{(1,j+1)}$ have opposite sign iff $j = \frac{w-n+1}{2}$, or in other words there is a sign change in the leading coefficients of $P^L_{\emptyset}|_{(1,j)}$ at $j = \frac{w-n+1}{2}$. Also note that in $P^L_{L_2}|_{(1, j)} = 0$ if $j>\frac{n-w-1}{2}$ and $u_1^j$ otherwise. Combining these facts using corollary \ref{cor:alex2}, we must have $w = n-1$. When $w = n-1$ we fix the sign of $P^L_{\emptyset}$ using corollary \ref{cor:alex2} to get
\[P^L_\emptyset =  \sum_{\substack{0\leq i \leq n-2\\0\leq j\leq n-1}} u_1^{i+j+\frac{3-n-w}{2}}u_2^{i-j+\frac{n-w+1}{2}} - \sum_{\substack{0 \leq i \leq n-1\\0 \leq j \leq n-2}}u_1^{i+j+\frac{3-n-w}{2}}u_2^{i-j+\frac{n-w-1}{2}}.\]
We now know enough to compute $\mathfrak{T}(L)$. We will compute the part of $\mathfrak{T}(L)$ inside the region bounded by $s_1+s_2 \geq n-2, s_1\geq 0$ and $s_2 \geq 0$. This is shown in Figure \ref{fig:case1}. 
\begin{figure}[t]
    \centering
    \includegraphics{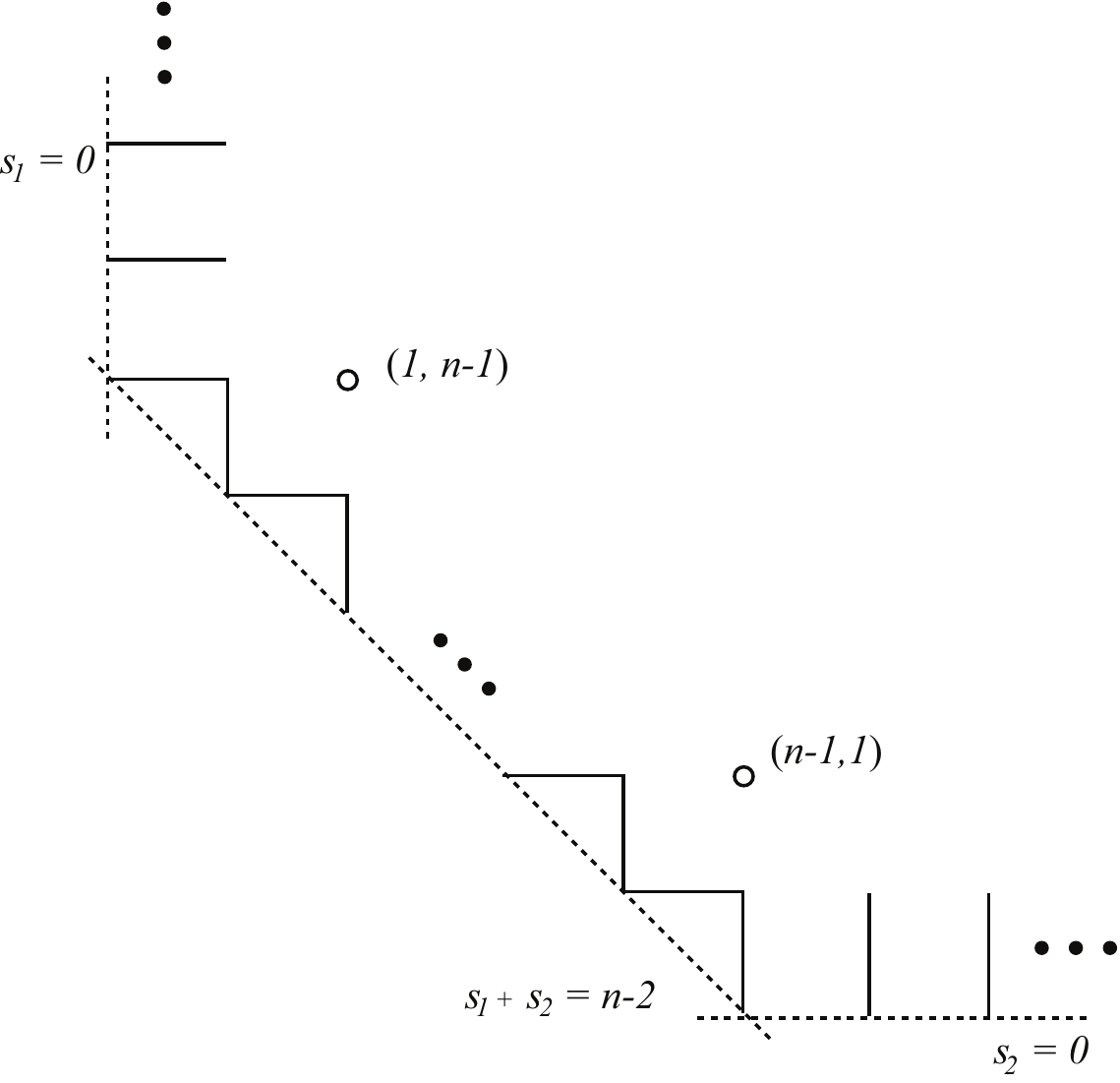}
    \caption{Part of $\mathfrak{T}(L)$, for $b(k^2-1,k)$ assuming it is an $L$-space link. Edges labeled with $1$ are drawn in black and edges labeled with $0$ are not shown.}
		\label{fig:case1}
\end{figure}
Using this and Lemma \ref{lem:hat} we compute
\begin{equation}\label{eq:a3}
\widehat{HFL}(L,(1,n-1)) \cong \mathbb{F}_{(1)}.
\end{equation}
Once again, using Theorem 1.3 in \cite{OZLinks}: if \textbf{s} $=(s_1,s_2)$ and $a_{\mbox{\scriptsize{\textbf{s}}}}$ is the coefficient of $u^{\mbox{\scriptsize{\textbf{s}}}}$ in $(1-u_1^{-1})(1-u_2^{-1})P^L_\emptyset$ then,
\[\widehat{HFL}(L,\textbf{s}) \cong \mathbb{F}^{|a_{\mbox{\scriptsize{\textbf{s}}}}|}_{s_1+s_2+\frac{\sigma -1}{2}}.\]
and therefore
\begin{equation}\label{eq:a4}
\widehat{HFL}(L,(1,n-1)) \cong \mathbb{F}_{(2n-2)}.
\end{equation}
Combining this with equation (\ref{eq:a3}) gives a contradiction, since $n$ is an integer.
\end{proof}

\begin{prop}
Suppose $L = b(q'k+1,k) = D(1,-1,\ldots,1,-1,w)$ is an $L$-space link, then $q' = 1$.
\end{prop}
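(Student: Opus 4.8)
The plan is to follow the preceding proposition almost verbatim, exploiting that $L = D(1,-1,\ldots,1,-1,w)$ has $p_i = 1$ and $q_i = -1$ for $i < n$, $p_n = w$, and all $q_i = -1$. First I would compute $\Delta_s$ for $s < n$ by induction on the recursion \ref{eq:rec}: since $p_i = 1$ and $q_i = -1$, the recursion collapses to $\Delta_s = (u_1+u_2)\Delta_{s-1} - u_1 u_2 \Delta_{s-2}$, whose solution with $\Delta_0 = 0$, $\Delta_1 = 1$ is $\Delta_s = \sum_{i=0}^{s-1} u_1^i u_2^{s-1-i}$. Feeding $\Delta_{n-1}$ and $\Delta_{n-2}$ into \ref{eq:rec} at $k = n$, where now $F_{p_n} = F_w = \sum_{i=0}^{w-1}(u_1u_2)^i$, and simplifying should give the closed form $\Delta_n = F_w S_n + (1 - F_w)S_{n-1}$, where $S_m := \sum_{i=0}^{m-1} u_1^i u_2^{m-1-i}$ (this formula is valid already for $n=1$, giving $\Delta_1 = F_w$). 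Expanding yields the explicit double sum
\[\Delta_n = \sum_{\substack{0\le a\le w-1\\ 0\le b\le n-1}} u_1^{a+b}u_2^{a+n-1-b} - \sum_{\substack{1\le a\le w-1\\ 0\le b\le n-2}} u_1^{a+b}u_2^{a+n-2-b}.\]

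Next I would record the surgery data. The linking number is $-l_n = -(n-1+w)$, so the $l-k=1$ clause of the definition of $P^L_{L_S}$ gives $P^L_{L_1}(u_2) = u_2^{-(n-1+w)/2}\sum_{i\ge 0} u_2^{-i}$ and symmetrically for $P^L_{L_2}$, while Theorem \ref{thm:K} together with Lemma \ref{lem:PL} gives $P^L_\emptyset = \pm (u_1u_2)^{(3-n-w)/2}\Delta_n$. I would then invoke Corollary \ref{cor:alex2} to fix the global sign: the $u_1$-support of $P^L_\emptyset$ lies strictly above $-(n-1+w)/2$, so the coefficient of every $u_1^{s_1'}$ in $P^L_{L_2}$ vanishes on that support; hence the leading (top $u_2$-degree) coefficient of each vertical slice $P^L_\emptyset|_{(1,s_1')}$ is forced to be $-1$, and the slices alternate in sign.

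The heart of the argument is a two-way computation of $\widehat{HFL}$ at the extreme corner $\mathbf{s}_0 = \left(\tfrac{n+w-1}{2}, \tfrac{w+1-n}{2}\right)$, the unique monomial of maximal $u_1$-degree, whose coefficient the previous step fixes to $-1$. On one hand, since $\mathbf{s}_0$ sits at the top of the support, Proposition \ref{prop:maxs} forces all edges of $\mathfrak{T}(L)$ lying above $\mathbf{s}_0$ to carry label $0$ (they come from the unknot sublink, with $\Delta_{L_2}=1$), so $HFL^-(L,\mathbf{s}_0+\varepsilon)=0$ for every $\varepsilon\in\{0,1\}^2\setminus\{\mathbf{0}\}$; Lemma \ref{lem:hat} then identifies $\widehat{HFL}(L,\mathbf{s}_0)$ with $HFL^-(L,\mathbf{s}_0)$, which I would read off the two labelled edges running into the corner (both forced to be $1$ by the corner coefficient $-1$) to obtain $\widehat{HFL}(L,\mathbf{s}_0)\cong\mathbb{F}_{(1)}$, a grading independent of $w$ and $n$. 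On the other hand, $L$ is alternating, so Theorem $1.3$ of \cite{OZLinks} computes $\widehat{HFL}(L,\mathbf{s}_0)$ from the coefficient of $u^{\mathbf{s}_0}$ in $(1-u_1^{-1})(1-u_2^{-1})P^L_\emptyset$ (which is $\pm 1$ at this corner) sitting in Maslov grading $(\mathbf{s}_0)_1+(\mathbf{s}_0)_2+\tfrac{\sigma-1}{2}$. Since $(\mathbf{s}_0)_1+(\mathbf{s}_0)_2 = w$ and $\sigma(b(q'k+1,k)) = q' = 2w-1$ by Lemma \ref{lem:sig}, this grading equals $w + \tfrac{(2w-1)-1}{2} = 2w-1$. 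Equating the two computations gives $2w-1 = 1$, i.e. $w = 1$ and hence $q' = 1$.

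The main obstacle is the first of the two computations: pinning down $\widehat{HFL}(L,\mathbf{s}_0) \cong \mathbb{F}_{(1)}$ on the nose, since this requires the labels of all edges of $\mathfrak{T}(L)$ in a neighbourhood of the corner (to apply Lemma \ref{lem:comp}) together with the correct absolute grading (fixed by pushing out to where $H_*(A^-_{\mathbf{s}}) \cong HF^-(S^3) = \mathbb{F}_{(0)}[U]$), not merely the value at $\mathbf{s}_0$. This is where I expect to lean hardest on Proposition \ref{prop:maxs} to force the outward edges to $0$ and on the sign/alternation data of Corollary \ref{cor:alex2} to force the two inward edges to $1$. A secondary point is to confirm that the degenerate small cases, in particular $n=1$ (the torus links $b(2w,1)$) with $\mathbf{s}_0 = (w/2,w/2)$, are covered by exactly the same corner, reducing to the $n=1$ instance already treated in the preceding proposition.
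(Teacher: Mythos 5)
Your proposal is correct and takes essentially the same route as the paper's own proof: compute $\Delta_n$ from the recursion, fix the sign of $P^L_\emptyset$ via Corollary \ref{cor:alex2}, evaluate $\widehat{HFL}$ at an extreme corner of the support in two ways ($\mathbb{F}_{(1)}$ from the $L$-space structure of $\mathfrak{T}(L)$ via Proposition \ref{prop:maxs} and Lemma \ref{lem:hat}, versus $\mathbb{F}_{(2w-1)}$ from the alternating-link formula and Lemma \ref{lem:sig}), and equate to force $w=1$; the paper simply uses the component-swapped corner $\left(\frac{w-n+1}{2},\frac{w+n-1}{2}\right)$ instead of your $\mathbf{s}_0$. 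One small wording slip worth fixing: the corner coefficient $-1$ forces the two edges \emph{leaving} $\mathbf{s}_0-\mathbf{1}$ to be labeled $1$ and the two edges \emph{entering} $\mathbf{s}_0$ to be labeled $0$ (the reverse configuration would give Euler characteristic $+1$), and it is this configuration, together with the $0$-labeled path out to large $\mathbf{s}$ where $H_*(A^-_{\mathbf{s}})\cong\mathbb{F}_{(0)}[U]$, that yields $\widehat{HFL}(L,\mathbf{s}_0)\cong\mathbb{F}_{(1)}$.
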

\begin{proof}
We follow the same proof as the previous proposition.
First note that, in this case lk$(L_1,L_2) = -l_n = -w-n+1$; and so 
\[P^L_{L_1}(u_2) = (u_2)^{\frac{-w-n+1}{2}} \sum_{i=0}^\infty (u_2)^{-i} \mbox{ and } P^L_{L_2}(u_1) = (u_1)^{\frac{-w-n+1}{2}} \sum_{i=0}^\infty (u_1)^{-i}.\]
We can compute
\[\Delta_n(u_1,u_2) = \sum_{\substack{0\leq i \leq w-1\\0\leq j\leq n-1}} u_1^{i+j}u_2^{i-j+n-1} - \sum_{\substack{1 \leq i \leq w-1\\0 \leq j \leq n-2}}u_1^{i+j}u_2^{i-j+n-2},\]
which gives
\[P^L_\emptyset = -\sum_{\substack{0\leq i \leq w-1\\0\leq j\leq n-1}} u_1^{i+j+\frac{-w-n+3}{2}}u_2^{i-j+\frac{-w+n+1}{2}} + \sum_{\substack{1 \leq i \leq w-1\\0 \leq j \leq n-2}}u_1^{i+j+\frac{-w-n+3}{2}}u_2^{i-j+\frac{-w+n-1}{2}},\]
where the signs are fixed by corollary \ref{cor:alex2}. Using this, we compute $\mathfrak{T}(L)$ inside the region bounded by $s_1+s_2 \geq w-2$,$s_1\geq \frac{w-n+1}{2}$ and $s_2 \geq \frac{w-n+1}{2}$ and it is shown in Figure \ref{fig:case2}.

\begin{figure}[t]
    \centering
		\includegraphics{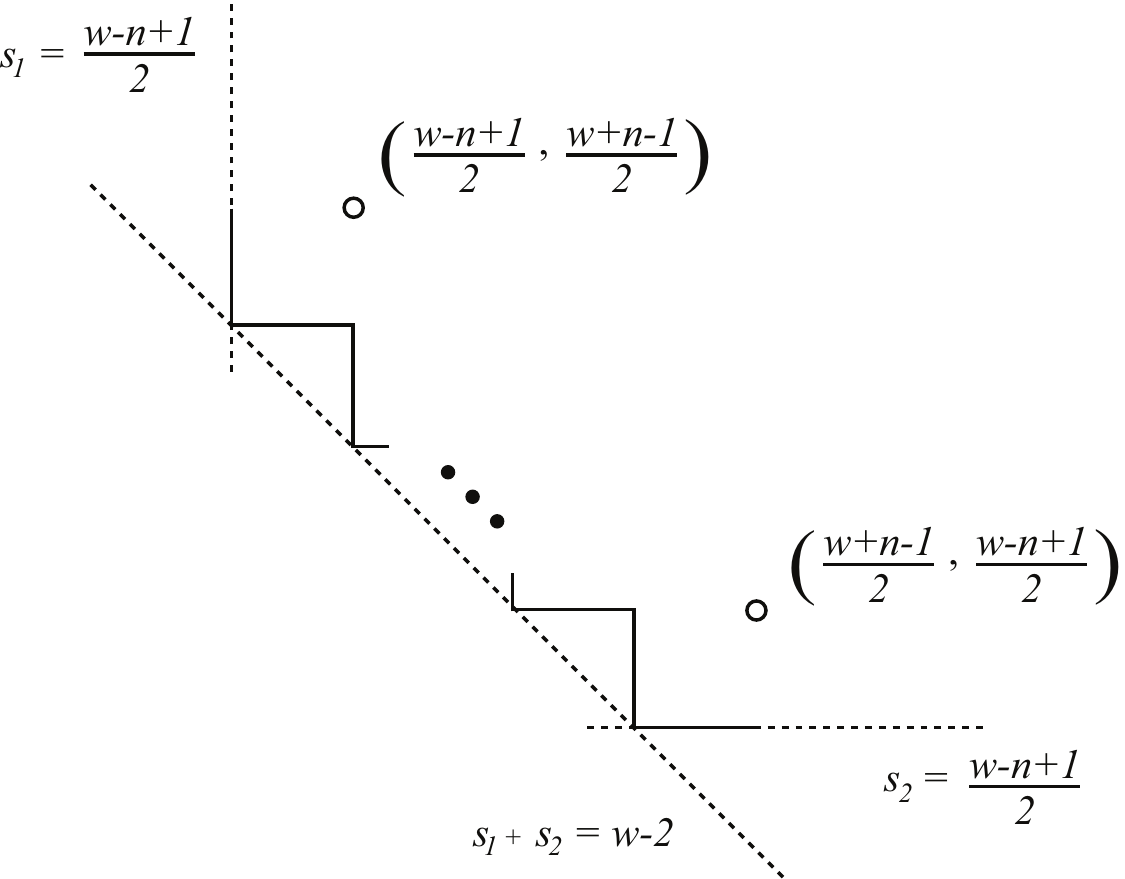}
    \caption{Part of $\mathfrak{T}(L)$ for $b(q'k+1,k)$, assuming it is an $L$-space link.}
		\label{fig:case2}
\end{figure}

\begin{equation}\label{eq:b1}
\widehat{HFL}\left(L,\left(\frac{w-n+1}{2},\frac{w+n-1}{2}\right)\right) \cong \mathbb{F}_{(1)}
\end{equation}
We can do this computation again using the fact that $L$ is alternating and to get
\begin{equation}\label{eq:b2}
\widehat{HFL}\left(L,\left(\frac{w-n+1}{2},\frac{w+n-1}{2}\right)\right) \cong \mathbb{F}_{(2w-1)}.
\end{equation}
combining equation \ref{eq:b1} and equation \ref{eq:b2} then gives $w = 1$ which implies $q' = 1$ as desired
\end{proof}
\begin{prop}
If $L = b(q'k+1,-k) = D(-1,1,\ldots,-1,1,-w)$ is an $L$-space link, then $k = 1$.
\end{prop}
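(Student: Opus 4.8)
The plan is to follow the template of the two preceding propositions, exploiting that $L = b(q'k+1,-k) = D(-1,1,\ldots,-1,1,-w)$ is the mirror of the link $b(q'k+1,k) = D(1,-1,\ldots,1,-1,w)$ just treated. First I would record the linking number: here $l_n = -(n-1)-w$, so $\mathrm{lk}(L_1,L_2) = -l_n = w+n-1 > 0$, whence $P^L_{L_1}(u_2) = u_2^{(w+n-1)/2}\sum_{i\ge 0}u_2^{-i}$ and $P^L_{L_2}(u_1) = u_1^{(w+n-1)/2}\sum_{i\ge 0}u_1^{-i}$. Since the multivariable Alexander polynomial is insensitive to mirroring, $\Delta_L$ agrees up to sign with the polynomial computed in the previous proposition; by Theorem~\ref{thm:K} and Lemma~\ref{lem:PL} this determines $P^L_\emptyset$ up to an overall sign, and the support of $P^L_\emptyset$ coincides with that of the previous proposition. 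In particular the extreme corner $\textbf{s} = ((w-n+1)/2,(w+n-1)/2)$ is a single monomial of $P^L_\emptyset$.

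Next I would fix signs and build the relevant piece of $\mathfrak{T}(L)$. Fixing the sign of $P^L_\emptyset$ via Corollary~\ref{cor:alex2}: because $\mathrm{lk} > 0$, the coefficient of $u_r^{s'_r}$ in $P^L_{L_{3-r}}$ equals $1$ for every column actually occurring in $P^L_\emptyset$ (all satisfy $s'_r \le (w+n-1)/2$), so the first nonzero coefficient of each column $P^L_\emptyset|_{(r,s'_r)}$ is $+1$ and the entries alternate from there. Thus, unlike the $b(qk-1,k)$ case, no auxiliary relation such as $w = n-1$ is forced at this stage. I would then draw $\mathfrak{T}(L)$ in a neighbourhood of $\textbf{s}$: Proposition~\ref{prop:maxs} together with $P^L_{L_1},P^L_{L_2}$ pins down all edges once $s_j \ge m(L)_j$, and the now-known signs of $P^L_\emptyset$ propagate the labels inward. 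Reading the cube off this figure and applying Lemma~\ref{lem:hat} should give $\widehat{HFL}(L,\textbf{s})$ one-dimensional, supported in a homological grading that grows with $n$ (the positive linking number reverses the orientation of the staircase relative to the previous proposition).

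Finally I would compare with the alternating computation. By Lemma~\ref{lem:sig}, $\sigma(b(q'k+1,-k)) = -q' = 1-2w$, so Theorem~$1.3$ of \cite{OZLinks} gives $\widehat{HFL}(L,\textbf{s}) \cong \mathbb{F}^{|a_{\textbf{s}}|}_{(s_1+s_2+(\sigma-1)/2)}$, with $a_{\textbf{s}}$ the coefficient of $u^{\textbf{s}}$ in $(1-u_1^{-1})(1-u_2^{-1})P^L_\emptyset$. At this corner $s_1+s_2 = w$ and $|a_{\textbf{s}}| = 1$ (the column is a single monomial), so the alternating grading is $s_1+s_2+(\sigma-1)/2 = w + (1-2w-1)/2 = 0$. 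Equating this with the $n$-dependent grading produced by the cube forces that grading to vanish, which happens only when $n = 1$, i.e. $k = 2n-1 = 1$. For $n=1$ the two computations agree and $L = b(2w,-1)$ lies in the expected family, so no contradiction arises there.

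The hard part will be the explicit determination of $\mathfrak{T}(L)$ around $\textbf{s}$ and verifying that the homological grading the cube assigns to $\widehat{HFL}(L,\textbf{s})$ genuinely depends on $n$ and equals $0$ exactly when $n=1$: because the linking number is now positive, the staircase in $\mathfrak{T}(L)$ runs opposite to the one in the $b(q'k+1,k)$ figure, so the figure and the Lemma~\ref{lem:hat} bookkeeping must be redone from scratch rather than transported from the mirror link. Verifying $|a_{\textbf{s}}| = 1$ at the corner is immediate from the single-monomial column computation, and the signature input is already supplied by Lemma~\ref{lem:sig}.
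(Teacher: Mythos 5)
Your setup is correct as far as it goes: the linking number, the polynomials $P^L_{L_1},P^L_{L_2}$, the observation that the corner $\mathbf{c}=\left(\frac{w-n+1}{2},\frac{w+n-1}{2}\right)$ carries a single monomial of $P^L_\emptyset$, the sign-fixing via Corollary \ref{cor:alex2}, and the alternating-grading computation $s_1+s_2+\frac{\sigma-1}{2}=w+\frac{(1-2w)-1}{2}=0$. Your route is also genuinely different from the paper's: you aim for a \emph{grading} contradiction at the Newton-polytope corner, whereas the paper works at $m(L)=\left(\frac{w+n-1}{2},\frac{w+n-1}{2}\right)$ and gets a \emph{thinness} contradiction. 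However, there is a genuine gap at the step where you invoke Lemma \ref{lem:hat}. That lemma requires $HFL^-(L,\mathbf{c}+\varepsilon)=0$ for every nonzero $\varepsilon\in\{0,1\}^2$, and this fails at the horizontal neighbor $\mathbf{c}+(1,0)$ precisely when $n\geq 2$, i.e.\ exactly in the case you must rule out. Indeed, run the determination of $\mathfrak{T}(L)$ along the row $s_2=m(L)_2$: by Proposition \ref{prop:maxs} every horizontal edge in that row with target first coordinate at most $\frac{w+n-1}{2}$ is labeled $1$, and both edges into $m(L)$ are labeled $1$; since $\mathbf{c}$ is the unique monomial of $P^L_\emptyset$ with $u_2$-exponent $\frac{w+n-1}{2}$, every lattice point of that row strictly to the right of $\mathbf{c}$ up to and including $m(L)$ has $\chi(HFL^-)=0$, which by the Euler-characteristic argument (Lemmas \ref{lem:HC}, \ref{lem:echar}, \ref{lem:comp}) forces all four edges of each of those squares to be labeled $1$. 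Hence $HFL^-(L,\mathbf{c}+(1,0))\cong\mathbb{F}\oplus\mathbb{F}$ in two adjacent degrees, not $0$, and you cannot identify $HFL^-(L,\mathbf{c})$ (which is indeed $\mathbb{F}$, in degree $2-2n$) with $\widehat{HFL}(L,\mathbf{c})$; a priori the hat group could be three-dimensional, picking up the two extra classes coming from $\mathbf{c}+(1,0)$.

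The gap is repairable, but only with work your outline does not contain: computing $\widehat{HFL}(L,\mathbf{c})$ directly from the cube $\boldsymbol{C}_{\mathbf{c}}$ in the proof of Lemma \ref{lem:hat}, whose nontrivial homologies are $\mathbb{F}_{(2-2n)}$ (from $\mathbf{c}$) and $\mathbb{F}_{(2-2n)}\oplus\mathbb{F}_{(1-2n)}$ (from $\mathbf{c}+(1,0)$, shifted by $U_1$), one finds from the long exact sequence that $\widehat{HFL}_{2-2n}(L,\mathbf{c})\neq 0$ no matter what the undetermined connecting map is; since Theorem $1.3$ of \cite{OZLinks} puts $\widehat{HFL}(L,\mathbf{c})$ entirely in degree $0$, this still forces $n=1$. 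The paper avoids all of this by choosing the one point of that row where Lemma \ref{lem:hat} \emph{does} apply, namely $m(L)$: there the three relevant neighbors have vanishing $HFL^-$ (they lie beyond $m(L)$, where Proposition \ref{prop:maxs} makes the squares degenerate), the all-ones square gives $\widehat{HFL}(L,m(L))\cong\mathbb{F}_{(0)}\oplus\mathbb{F}_{(-1)}$, and the contradiction is immediate from the fact that an alternating link has $\widehat{HFL}$ supported in a single degree for each Spin$^c$ structure --- no absolute-grading bookkeeping and no sign-fixing of $P^L_\emptyset$ are needed at all.
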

\begin{proof}
Here lk$(L_1,L_2) = -l_n = w+n-1$, and so 
\[P_{L_1}^L(u_2) = (u_2)^{\frac{w+n-1}{2}}\sum_{i=0}^\infty (u_2)^{-i} \mbox{ and } P_{L_2}^L(u_1) = (u_1)^{\frac{w+n-1}{2}}\sum_{i=0}^\infty (u_1)^{-i}\]
and
\[P^{L}_\emptyset = -\sum_{\substack{1-w \leq i\leq -1\\0\leq j\leq n-2}}u_1^{i+j+\frac{w-n+3}{2}}u_2^{i-j+\frac{w+n-1}{2}} + \sum_{\substack{-w \leq i\leq -1\\0\leq j\leq n-1}}u_1^{i+j+\frac{w-n+3}{2}}u_2^{i-j+\frac{w+n+1}{2}}\]
where we have fixed signs for $P^{L}_\emptyset$, as in the previous two propositions using corollary \ref{cor:alex2}. Note that both edges going to $\left(\frac{w+n-1}{2},\frac{w+n-1}{2}\right)$ must be labeled with $1$ because they are determined by $P^L_{L_i}$ since $m(L) = \left(\frac{w+n-1}{2},\frac{w+n-1}{2}\right)$. Also notice that when $n>1$, the point $\left(\frac{w+n-1}{2},\frac{w+n-1}{2}\right)$ is outside of the Newton polytope for $P^L_\emptyset$. Thus both edges from $\left(\frac{w+n-3}{2},\frac{w+n-3}{2}\right)$ are also labeled with $1$. So we get
\[\widehat{HFL}\left(L,\left(\frac{w+n-1}{2},\frac{w+n-1}{2}\right)\right) \cong \mathbb{F}_{(0)} \oplus \mathbb{F}_{(-1)},\]
which is a contradiction because for an alternating link $L$ we know $\widehat{HFL}(L,\mbox{\textbf{s}})$ is only supported in one degree. Thus, we must have $n = 1$, which forces $k = 1$ as well.
\end{proof}
\begin{proof}[proof of Theorem \ref{thm:bridge}]
Combining the previous three propositions (also Lemma \ref{lem:polyres}) shows that, if $b(\alpha,\beta)$ is an $L$-space link, then it is either $b(qk-1,-k)$ for $q$ and $k$ odd positive integers, or of the form $b(k+1,k)$ where $k$ is odd. Note that reversing the orientation of one of the components of $b(k+1,k)$ gives $b(k+1,-1)$, which proves the Theorem.
\end{proof}
\nocite{*}
\bibliographystyle{plain}
\bibliography{NDarxiv}
\end{document}